\title{Renormalisable H\'enon-like Maps and Unbounded Geometry}
\author{P.~E.~Hazard \and M.~Lyubich \and M.~Martens}
\date{\today}
\newcounter{bean}
\newcounter{beany}
\renewenvironment{enumerate}{\begin{list}{(\roman{bean})}{\usecounter{bean}\setlength{\rightmargin}{\leftmargin}}}{\end{list}\setcounter{bean}{1}}
\newenvironment{a-enumerate}{\begin{list}{(A-\arabic{beany})}{\usecounter{beany}\setlength{\rightmargin}{\leftmargin}}}{\end{list}}
\newenvironment{b-enumerate}{\begin{list}{(B-\arabic{bean})}{\usecounter{bean}\setlength{\rightmargin}{\leftmargin}}}{\end{list}\setcounter{bean}{1}}
\newenvironment{d-enumerate}{\begin{list}{(D-\arabic{bean})}{\usecounter{bean}\setlength{\rightmargin}{\leftmargin}}}{\end{list}\setcounter{bean}{1}}
\theoremstyle{plain}
\newtheorem{thm}{Theorem}[section]
\newtheorem{lem}[thm]{Lemma}
\newtheorem{prop}[thm]{Proposition}
\newtheorem{cor}[thm]{Corollary}
\theoremstyle{definition}
\newtheorem{defn}[thm]{Definition}
\theoremstyle{remark}
\newtheorem{rmk}[thm]{Remark}
\numberwithin{equation}{section}
\begin{document}
\maketitle

\def\IMSmarkvadjust{0 pt}
\def\IMSmarkhadjust{0 pt}
\def\IMSmarkhpadding{0 pt}
\def\IMSpubltext{Published in modified form:}
\def\SBIMSMark#1#2#3{
 \font\SBF=cmss10 at 10 true pt
 \font\SBI=cmssi10 at 10 true pt
 \setbox0=\hbox{\SBF \hbox to \IMSmarkhpadding{\relax}
                Stony Brook IMS Preprint \##1}
 \setbox2=\hbox to \wd0{\hfil \SBI #2}
 \setbox4=\hbox to \wd0{\hfil \SBI #3}
 \setbox6=\hbox to \wd0{\hss
             \vbox{\hsize=\wd0 \parskip=0pt \baselineskip=10 true pt
                   \copy0 \break%
                   \copy2 \break%
                   \copy4 \break}}
 \dimen0=\ht6   \advance\dimen0 by \vsize \advance\dimen0 by 8 true pt
                \advance\dimen0 by -\pagetotal
	        \advance\dimen0 by \IMSmarkvadjust
 \dimen2=\hsize \advance\dimen2 by .25 true in
	        \advance\dimen2 by \IMSmarkhadjust

%
%
  \openin2=publishd.tex
  \ifeof2\setbox0=\hbox to 0pt{}
  \else 
     \setbox0=\hbox to 3.1 true in{
                \vbox to \ht6{\hsize=3 true in \parskip=0pt  \noindent  
                {\SBI \IMSpubltext}\hfil\break
                \input publishd.tex 
                \vfill}}
  \fi
  \closein2
  \ht0=0pt \dp0=0pt
 \ht6=0pt \dp6=0pt
 \setbox8=\vbox to \dimen0{\vfill \hbox to \dimen2{\copy0 \hss \copy6}}
 \ht8=0pt \dp8=0pt \wd8=0pt
 \copy8
 \message{*** Stony Brook IMS Preprint #1, #2. #3 ***}
}

\SBIMSMark{2010/2}{February 2010}{}

\vspace{-1cm}

\begin{abstract}
We show that given a one parameter family $F_b$ of strongly dissipative infinitely renormalisable H\'enon-like maps, parametrised by a quantity called the `average Jacobian'
$b$, the set of all parameters $b$ such that $F_b$ has a Cantor set with unbounded geometry has full Lebesgue measure.
\end{abstract}
\section{Introduction}
\subsection{Background}
In~\cite{dCML} de Carvalho and two of the current authors constructed a period-doubling renormalisation theory for H\'enon-like mappings of the form
\begin{equation}
F(x,y)=(f(x)-\e(x,y),x).
\end{equation}
Here $f$ is a unimodal map and $\e$ is a real-valued map from the square to the positive real numbers of small size (we shall be more explicit about the maps under consideration in Section~\ref{prelim}). Their results were extended in~\cite{Haz1} to arbitrary stationary combinatorics. This paper picks up where~\cite{Haz1} left off, by considering the geometry of the invariant Cantor set $\Cantor$ of $F$, constructed in those two papers, in more detail. 

For a long time it was assumed that the properties satisfied by the one dimensional unimodal renormalisation theory would also be satisfied by any renormalisation theory in any dimension. In the above two papers this was shown to be false. More specifically it was shown that at a special point $\tau$ of the Cantor set $\Cantor$ the renormalisations converged at a universal rate for each stationary combinatorial type. It was also shown that any conjugacy between the Cantor sets $\Cantor$ and $\tilde\Cantor$ for two given infinitely renormalisable H\'enon-like maps $F$ and $\tilde F$ of the same combinatorial type, which preserves tips, can only be $C^1$ if the \emph{average Jacobians} of $F$ and $\tilde F$ are equal (see below and~\cite{Haz1} for more precise statements). Hence universality at the tip is not equivalent to rigidity at the tip.

Another aspect of the renormalisation theory for unimodal maps is the notion of \emph{a priori} bounds. These are uniform or eventually uniform bounds for the geometry of the images of
the central interval at each renormalisation step. More precisely, let $f\colon [0,1]\to [0,1]$ be a unimodal map with central intervals $I_{i+1}\subset I_{i}$ of levels $i+1$ and $i$ respectively. If $J=f^k(I_{i})$ and $J'=f^k(I_{i+1})$ (where $k>0$ is some integer less that the return time of $I_{i}$) then $|J'|/|J|, |L'|/|J|$ and $|R'|/|J|$ are (eventually) uniformly bounded from below. Here $L', R'$ are the left and right connected components of $J-J'$. It is on such properties that the current paper will concern itself. 

Several authors have worked on consequences of a similar notion of \emph{a priori} bounds in the two dimensional case. For example Catsigeras, Moreira and Gambaudo~\cite{CGM1} and Moreira~\cite{Mor1} consider common generalisations of the model introduced by Bowen, Franks and Young in~\cite{BandF} and~\cite{FandY}, and the model introduced by Gambaudo, Tresser and van Strien in~\cite{GST1}. The first paper,~\cite{CGM1}, shows that given a dissipative infinitely renormalisable diffeomorphism of the disk with bounded combinatorics and bounded geometry, there is a dichotomy: either it has positive topological entropy or it is eventually period doubling. In the second paper~\cite{Mor1}, a comparison is made between the smoothness and combinatorics of the two models using the asymptotic linking number: given a period doubling $C^\infty$, dissipative, infinitely renormalisable diffeomorphism of the disk with bounded geometry the convergents of the asymptotic linking number cannot converge monotonically. This should be viewed as a kind of combinatorial rigidity result which, in particular, implies that Bowen-Franks-Young maps cannot be $C^\infty$.

\paragraph{Acknowledgements.} The authors would like to thank Michael Benedicks for many useful discussions during his stay at Stony Brook in Spring 2008. We also thank Sebastian van Strien and Andr\'e de Carvalho for their insights on H\'enon dynamics and their many useful comments on the current work.
\subsection{Statement of Results}
However, we would like to note that \emph{as of yet} no example of an infinitely renormalisable H\'enon-like map with bounded geometry is known. To the authors knowledge, in the slightly more general case of infinitely renormalisable diffeomorphisms of the disk (considered in the above two papers), no example with bounded geometry is known either. In fact, at least for the H\'enon-like case, we will show the following result:  
\begin{thm}\label{main}
Let $F_b$ be a one parameter family, parametrised by the average Jacobian $b=b(F_b)\in [0,b_0)$, of infinitely renormalisable H\'enon-like maps. Then there is a subinterval $[0,b_1]\subset
[0,b_0)$ for which there exists a dense $G_\delta$ subset
$S\subset [0,b_1)$ with full relative Lebesgue measure such that the Cantor set $\Cantor(b)=\Cantor(F_b)$ has unbounded geometry for all $b\in S$.
\end{thm}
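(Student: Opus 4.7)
The plan is to first identify $S$ as a $G_\delta$ set, and then prove it has full Lebesgue measure; density of $S$ follows automatically from full measure in an interval.

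For the $G_\delta$ structure, let $r_{n,\sigma}(b)$ denote an appropriately chosen gap-to-enclosing-piece ratio in $\Cantor(b)$ at renormalisation level $n$ and symbolic address $\sigma$. Continuity of H\'enon renormalisation in the parameter makes each $r_{n,\sigma}$ a continuous function of $b\in[0,b_1)$, and unbounded geometry of $\Cantor(b)$ is equivalent to $\inf_{n,\sigma} r_{n,\sigma}(b) = 0$. Hence
\[
S \;=\; \bigcap_{N\ge 1}\; \bigcup_{n,\sigma}\; \{\,b\in[0,b_1):r_{n,\sigma}(b) < 1/N\,\},
\]
which is $G_\delta$ as a countable intersection of open sets.

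For full measure, I would exploit the universality-at-the-tip expansions developed in \cite{dCML, Haz1}. At level $n$ the rescaled H\'enon renormalisation differs from its unimodal limit by a tilt of order $b^n$ (up to universal constants depending on the stationary combinatorics). Propagating this tilt along a carefully chosen symbolic orbit $\sigma(n)$ that visits the tip repeatedly yields an explicit asymptotic $r_{n,\sigma(n)}(b) \sim C\, b^{n}\,\psi_n(b)$, where $\psi_n$ is a known transcendental quantity in $b$ coming from the compounded tilt. For each $N$ I would then define
\[
E_{n,N} \;=\; \{\,b\in[0,b_1):r_{n,\sigma(n)}(b) < 1/N\,\}
\]
and estimate the Lebesgue measure of the complement. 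The completion is a Borel--Cantelli argument: a transversality bound on $|\partial_b r_{n,\sigma(n)}(b)|$ relative to $|r_{n,\sigma(n)}(b)|$ forces $|E_{n,N}^c|$ to decay in $n$; arranging $\sum_n |E_{n,N}^c|<\infty$ along a subsequence makes $\limsup_n E_{n,N}$ of full measure for each $N$, and $\bigcap_N \limsup_n E_{n,N}\subset S$.

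The main obstacle is the transversality estimate. The H\'enon perturbation at level $n$ is tiny (of order $b^n$), so one must select $\sigma(n)$ cleverly both so that the compounded distortion is large enough to violate bounded geometry and so that the resulting quantity's $b$-dependence is genuinely transverse. Balancing the universal unimodal scaling factors against the exponentially small H\'enon correction, while preserving the nondegenerate $b$-variation needed for the measure estimate, is the technical heart of the argument; everything else reduces to the standard Borel--Cantelli/continuity machinery outlined above.
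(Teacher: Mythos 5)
Your plan correctly separates the two tasks ($G_\delta$ structure and full measure) and correctly senses that the $O(b^{p^m})$ H\'enon correction at renormalisation level $m$ should drive the argument; but both halves replace the paper's key structural ideas with something that, as written, does not close.

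On the $G_\delta$ claim: unbounded geometry is defined by the non-existence of \emph{any} boxing with bounded geometry, so a criterion of the form $\inf_{n,\sigma}r_{n,\sigma}(b)=0$ drawn from a single chosen family of ratios needs a separate argument to rule out \emph{all} boxings. The paper's mechanism delivers this uniformity for free. If $A_0<b^{p^m}/\sigma^{n-m}<A_1$ then (Proposition~\ref{parameterh-overlap}) two specific Cantor cylinder sets, pushed from height $n+1$ to height $m$ by the scope map $\MT_{m,n}$, come to lie on a common vertical line, so \emph{every} admissible boxing --- whose pieces contain those cylinders by property~(B-4) --- exhibits horizontal overlap; Propositions~\ref{h-overlap-close}, \ref{h-overlap-far}, and \ref{ifh-overlap} then force, for any boxing, two deeper pieces to have separation $O(\sigma^{2m}b^{2p^m})$ while an adjacent diameter is $\gtrsim\sigma^{m}b^{2p^m}$, a ratio that blows up like $\sigma^{-m}$. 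Note this is $\sigma^{-m}$, not $b^{-n}$; the relevant exponent on $b$ is $p^m$, not $n$, and this is a structural difference from your $r_{n,\sigma(n)}(b)\sim Cb^{n}\psi_n(b)$ ansatz.

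On full measure: once the overlap criterion is recast as the arithmetic condition $b\in I_{d,\delta}=[\sigma^{d\delta}A_0^\delta,\sigma^{d\delta}A_1^\delta]$ with $\delta=1/p^m$, $d=n-m$, the dependence on $b$ is entirely explicit and no transversality bound on $\partial_b$ is needed. The full-measure set is produced by a direct covering argument (Theorem~\ref{thm:fullmeas}, Proposition~\ref{G-thm}): either consecutive intervals $I_{d,\delta}$ already overlap for each fixed $\delta$ (when $A_1\sigma\ge A_0$), or a gap-filling estimate (Lemma~\ref{lem:gapfilling}) shows each gap is filled to a definite fraction by finer-scale intervals $I_{d'',\delta''}$, and a nested construction delivers a dense $G_\delta$ of full measure. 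This sidesteps exactly the obstacle you identify as ``the technical heart'': your Borel--Cantelli plan has no controllable quantity to begin with until the dynamical ratio $r_{n,\sigma}$ is replaced by the algebraic criterion $b^{p^m}\asymp\sigma^{n-m}$, and once that replacement is made the measure estimate becomes elementary and no transversality in the parameter enters at all.
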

We now outline the structure of the paper. In the next section we will review the results of~\cite{Haz1} that will be necessary to prove the above theorem, with a quick primer on unimodal renormalisation theory to aid with setting our notations. In the following section we define \emph{boxings} of the Cantor set. These are nested sequences of pairwise disjoint simply connected domains that `nest down' to the Cantor set $\Cantor$ and are invariant under the dynamics. We then introduce our construction and the mechanism that will destroy the geometry of our boxings, namely \emph{horizontal overlapping}. Then we give a condition in terms of the average Jacobian for horizontal overlapping of boxes to occur. We show this condition is satisfied for a dense $G_\delta$ set of parameters with full Lebesgue measure. This last part is purely analytical and has no dynamical content. 
\subsection{Open Problems}
Before proceeding we would like to state some open problems suggested by the current work. As was mentioned above, the biggest problem appears to be whether any infinitely renormalisable H\'enon-like map has bounded geometry. This, however, would require different
machinery to that introduced~\cite{dCML} and ~\cite{Haz1}, or a least an extension of it. The difficulty lies in bounded geometry being a global property whereas,
only the local behaviour around the `tip' of the Cantor set is relatively well known. (However, recent work has shown the geometry can be well understood in a distibutional sense, see~\cite{LM3}). 

Specifically, we draw the readers attention to the dichotomy shown in the proof of Proposition~\ref{G-thm}. This states that if $A_1\sigma\geq A_0$ then there are no parameters giving bounded geometry Cantor sets, where $\sigma$ is the scaling ratio and $A_0, A_1$ are the constants from Proposition~\ref{parameterh-overlap}. The value of $\sigma$ is determined by the combinatorial type of the maps we are considering, whereas $A_0,A_1$ depend also upon the choice of well-chosen words and and well-placed points (see Section~\ref{construction} for definitions). Ultimately the admissable well-chosen words depend on the combinatorial type also, or more precisely on the structure of the presentation functions for that combinatorial type. This suggests it may be possible to show there is no bounded geometry, for any parameter values, in certain classes of combinatorial types. This would require a finer analysis of the one-dimensional presentation functions than is currently available.

A more preliminary step would also be to find the Hausdorff dimension of the set $S$ in Theorem~\ref{main}. This would simply be a further analysis of our construction of $S$, however it may be the case that, as in the previous problem, more control over the relative sizes of $A_0$ and $A_1$ will be required.  

\section{Preliminaries}\label{prelim}
\subsection{Notations and Conventions}
Let $\pi_x,\pi_y\colon \RR^2\to \RR$ denote the projections onto the $x-$ and $y-$ coordinates. We will identify these with their extensions to $\CC^2$. (In fact we will
identify all real functions with their complex extensions whenever they exist.) 

Given points $a,b\in \RR$ we will denote the closed interval between $a$ and $b$ by $[a,b]=[b,a]$.
Throughout we will denote the interval $[-1,1]$ by $J$ and the square $[-1,1]^2=J^2$ by $B$.

Let $\Omega\subset\CC^2$ be the product of two simply connected domains in $\CC$ compactly containing $B^2$. That is $\Omega=\Omega_x\times\Omega_y$ where
$\Omega_x=\pi_x(\Omega),\Omega_y=\pi_y(\Omega)\subset \CC$ are disks containing $J$.

Given points $z_0, z_1\in B$, the \emph{rectangle spanned by $z_0$ and $z_1$} is given by
\[\brl z_0,z_1\brr=[\pi_x(z_0),\pi_x(z_1)]\times[\pi_y(z_0),\pi_y(z_1)],\]
and the \emph{straight line segment between $z$ and $\tilde{z}$} is denoted by $[z,\tilde{z}]$. The convex hull of a set $S\subset \RR^2$ will be denoted by $\hull(S)$.

We say that two planar sets \emph{horizontally overlap} if they mutually intersect a vertical line, that is if their projections onto the $x$-axis intersect. Similarly we say
two planar sets \emph{vertically overlap} if they mutually intersect a horizontal line, which is equivalent to saying that their projections onto the $y$-axis intersect. 

We say two planar sets $S_0, S_1$ are \emph{horizontally separated} if $\pi_x(\hull(S_0))\cap\pi_x(\hull(S_1))=\emptyset$. Similarly we say the sets $S_0,S_1\subset\RR^2$
are \emph{vertically separated} if $\pi_y(\hull(S_0))\cap\pi_y(\hull(S_1))=\emptyset$.

Let $M, N$ be manifolds and $r=0,1\ldots,\infty,\omega$. We denote by $C^r(M,N)$ the space of $C^r$-maps from $M$ to $N$ and by $\Emb^r(M,N)$ the space of $C^r$-embeddings, that is, diffeomorphisms onto their images if $M$ and $N$ have the same dimension.

\subsection{Unimodal Maps}\label{subsect:unimodal}
Let $\U_{\Omega_x}$ denote the space of maps $f\in C^\omega(J,J)$ satisfying
\begin{enumerate}
\item $f$ has a unique critical point $c_0=c(f)$ which lies in $(-1,1)$;
\item $f$ is orientation preserving to the left of $c_0$ and orientation reversing to the right of $c_0$;
\item $J$ is the dynamical interval for $f$, that is, if $c_i=\o{i}{f}(c_0)$, then $c_1=1, c_2=-1$;
\item $f$ admits a holomorphic extension to the domain $\Omega_x$, upon which it can be factored as $\psi\circ Q\circ \ii$ where $\ii\colon J\to [-a,1]$ is the unique orientation
preserving affine bijection between those domains, $Q\colon\CC\to\CC$ is given by $Q(z)=1-z^2$ and
$\psi\colon Q\circ\ii(\Omega_x)\to\CC$ is univalent and fixes the real axis;
\item there is a unique expanding fixed point in the interior of $J$.
\end{enumerate}
Such maps\footnote{We will also assume critical points are uniformly bounded from the critical value. If this bound is sufficiently small a neighbourhood of the renormalisation
fixed point will be contained in this space.} will be called \emph{$\U$-maps}. We will identify all $\U$-maps with their holomorphic extensions. We make two observations: first, this extension will be
$\RR$-symmetric (i.e. $f(\bar z)=\overline{f(z)}$ for all $z\in\Omega_x$) and second, the expanding fixed point will have negative multiplier.

\begin{defn}[unimodal permutation]
Given a permutation $\upsilon$ of the set $W_p=\{0,1,\ldots,p-1\}\subset\RR$ we construct a map $g_\upsilon\colon [0,p-1]\to [0,p-1]$ by setting
\[g_\upsilon\colon x\mapsto \left\{\begin{array}{ll}\upsilon(x) & x\in W_p \\ \upsilon(i)+(x-i)(\upsilon(i+1)-\upsilon(i)) & x\in (i,i+1), i\in W_p \end{array}\right.\]
then extending affinely between these points.

A permutation $\upsilon$ of the set $W_p=\{0,1,\ldots,p-1\}$ is called a \emph{unimodal permutation} if $g_\upsilon$ has exactly two domains of
monotonicity, on the left one $g_\upsilon$ is increasing and on the right one is decreasing.
\end{defn}
\begin{defn}[renormalisable]
A map $f\in \U_{\Omega_x}$ is \emph{renormalisable with combinatorics $\upsilon$} if 
\begin{enumerate}
\item there is a subinterval $J^0\subset J$ containing the critical point such that $\o{p}{f}(J^0)\subset J^0$;
\item the interiors of the subintervals $J^i=\o{i}{f}(J^0), i\in W_p$ are pairwise disjoint;
\item $f$ acts on $\uline{J}=\{J^0,J^1=f(J^0),\ldots J^{p-1}=\o{p-1}{f}(J^0)\}$, embedded in the line with the standard orientation, as $\upsilon$ acts on the symbols $W_p=\{0,1,\ldots,p-1\}$. More precisely, if $J',J''\in\uline{J}$ are the $i$-th and $j$-th intervals from the left endpoint of $J$ respectively. Then $f(J')$ lies to the left of $f(J'')$ if and only if $\upsilon(i)<\upsilon(j)$;
\item the map 
\[\RU f=h^{-1}\circ \o{p}{f}\circ h\]
is an element of $\U_{\Omega_x}$ for an affine bijection $h$ from $J$ to $J^0$. Note there are exactly two such affine bijections, but there will only be one such that $\RU f\in \U_{\Omega_x}$;
\end{enumerate}
The map $\RU f$ is called the \emph{renormalisation of $f$} and the operator $\RU$ the \emph{renormalisation operator of combinatorial type $\perm$}. 

\end{defn}
Let $\U_{\Omega_x,\upsilon}$ denote the subspace consisting of maps $f\in\U_{\Omega_x}$ which are renormalisable of combinatorial type $\perm$.  
If $\RU^n f\in \U_{\Omega_x,\perm}$ for all $n\geq 0$ then we will say $f$ is \emph{infinitely renormalisable with stationary combinatorics $\perm$}. It will be these maps we are most interested in.

Henceforth we will fix a unimodal permutation $\perm$ and drop the $p$ from $W_p$. That is we denote $\{0,1,\ldots,p-1\}$ by $W$. We will maintain the $\perm$ in $\U_{\Omega_x,\perm}$ to distinguish it from the space of (possibly non-renormalisable) unimodal maps $\U_{\Omega_x}$. 
Let $W^n$ denote the set of all words over $W$ of length $n$, 
let $W^*$ denote the sets of words over $W$ of arbitrary finite length and 
let $\bar W$ denote the space of all words of infinite length. 
We endow $W^*$ and $\bar W$ with the structure of an adding machine and denote the transformation ``addition with infinite carry'' by $\word{w}{}\mapsto 1+\word{w}{}$. 
That is, for $\word{w}{}=w_0\ldots w_n\in W^*$,
\[\word{w}{}\mapsto 1+\word{w}{}=\left\{\begin{array}{ll}
(1+w_0,w_1,\ldots,w_n) & w_0\neq p-1 \\ 
(0,0,\ldots,0,1+w_k,\ldots,w_n) & w_0,\ldots,w_{k-1}=p-1, w_k\neq p-1 \\
(\underbrace{0,\ldots,0}_{n-times},1) & w_0,\ldots,w_n=p-1.
\end{array}\right.\]
The addition on $\bar W$ is similar. If $f\in\U_{\Omega_x,\upsilon}$ is infinitely renormalisable there is a collection $\uline{J}=\{J^{\word{w}{}}\}_{\word{w}{}\in W^*}$ of subintervals with the following properties:
\begin{enumerate}
\item $f(J^{\word{w}{}})=J^{1+\word{w}{}}$ for all $\word{w}{}\in W^*$;
\item $J^{\word{w}{}}$ and $J^{\word{\tilde w}{}}$ are disjoint for all $\word{w}{}\neq \word{\tilde w}{}$ of the same length;
\item the disjoint union of the $J^{\word{w}{w}}$, $w\in W$, is a subset of $J^{\word{w}{}}$, for all $\word{w}{}\in W^*$.
\end{enumerate} 
The following is integral to the renormalisation theory of unimodal maps. (See~\cite{dMvS} for the proof and more details.)
\begin{thm}[Real A Priori Bounds]\label{real-ap-bounds}
Let $f\in\U_{\Omega_x,\upsilon}$ be an infinitely renormalisable unimodal map. Then there exists constants $L>1$ and $0<k_0<k_1<1$ such that 
\begin{enumerate}
\item $L^{-1}<|J^{\word{w}{w}}|/|J^{\word{w}{\tilde{w}}}|<L$;
\item $k_0<|J^{\word{w}{w}}|/|J^{\word{w}{}}|<k_1$;
\end{enumerate}
for all $\word{w}{}\in W^*, w,\tilde{w}\in W$. Moreover these bounds are \emph{beau}\footnote{This means there are universal $L, k_i$, depending upon $\upsilon$ only such that
the following holds: given any infinitely renormalisable
$f$ there is an $N>0$ such that these bounds hold for all $\word{w}{}\in W^n$, $n>N$. According to Sullivan, beau stands for ``bounded eventually and universally''.}.
\end{thm}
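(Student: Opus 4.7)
The plan is to follow the classical two-stage Sullivan strategy: first obtain (i) and (ii) with constants depending on $f$, and then upgrade to universal (beau) bounds by exploiting compactness of the renormalisation dynamics on $\U_{\Omega_x,\perm}$.

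The central analytic tool is the real Koebe principle: if a branch of some $\o{k}{f}$ is monotone on $T\supset I$ and both components of $\o{k}{f}(T)\setminus \o{k}{f}(I)$ have length at least $\tau\,|\o{k}{f}(I)|$, then the distortion of $\o{k}{f}|I$ is bounded by a function of $\tau$ alone. Because every $f\in\U_{\Omega_x}$ factors through the fixed univalent family $\psi$ on $Q\circ\ii(\Omega_x)$, the nonlinearity estimates needed to run the Koebe argument are uniform over $\U_{\Omega_x}$; this uniformity is ultimately what will make the bounds beau.

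In the cycle $\{J^{\word{w}{w'}}\}_{w'\in W}$ exactly one interval contains the critical point, and the other $p-1$ applications of $f$ along the cycle are monotone diffeomorphisms. Property (i) therefore reduces to bounded distortion of the long monotone portions of the cycle, provided one can first produce, for each parent word $\word{w}{}$, a definite Koebe neighbourhood $T^{\word{w}{}}\supset J^{\word{w}{}}$. The upper half of (ii) is automatic from pairwise disjointness of the $J^{\word{w}{w'}}$, $w'\in W$, inside $J^{\word{w}{}}$. The whole theorem thus reduces to constructing, uniformly in $\word{w}{}$, an interval $T^{\word{w}{}}$ on which a suitable iterate of $f$ is monotone with bounded distortion and such that $|T^{\word{w}{}}\setminus J^{\word{w}{}}|$ is comparable to $|J^{\word{w}{}}|$.

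Constructing $T^{\word{w}{}}$ is the main technical obstacle. The standard strategy is to pull back a fixed neighbourhood of $J$ along the orbit of $J^{\word{w}{}}$, using the expanding fixed point $\beta$ and its symmetric preimage $-\beta$ as topological barriers that prevent pull-backs from spilling past themselves. A smallest-interval argument then shows that at least one element of the cycle must carry a definite amount of Koebe space, and cross-ratio monotonicity transports this space to every other element of the cycle. Once this is achieved, Koebe yields the bounded distortion that gives (i) and the lower half of (ii) simultaneously. Finally, to promote the bounds to beau ones, one observes that the infinitely renormalisable maps in $\U_{\Omega_x,\perm}$ form a normal family invariant under $\RU$; after finitely many renormalisations $\RU^n f$ lies in a compact set on which the constants obtained above can be taken uniform, yielding constants $L,k_0,k_1$ depending only on $\perm$.
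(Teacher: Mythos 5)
The paper does not prove this theorem: it states it as a classical background result and cites de~Melo--van~Strien for the proof, so there is no internal proof to compare against. Your outline is the standard Sullivan/de~Melo--van~Strien route (cross-ratio control, a smallest-interval argument to locate Koebe space, real Koebe distortion to distribute it along the cycle, and compactness of the post-critically bounded family to upgrade to beau constants), so the approach is the right one.

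Two points in your sketch would need to be made honest in a full write-up. First, item (i) compares \emph{all} pairs of siblings $J^{\word{w}{w}}, J^{\word{w}{\tilde{w}}}$, including the central child containing the critical point; bounded distortion of the long monotone branch of $\o{k}{f}$ does not by itself compare that child to the others, because the cycle passes once through the turning point. You must exploit the decomposition $f=\psi\circ Q\circ\ii$ (a power law pre- and post-composed with maps of uniformly bounded distortion) to control the pull-back across the critical point; your remark about $\psi$ gestures at this but conflates it with uniformity of nonlinearity, which is a different issue. Second, the upper bound in (ii) is \emph{not} automatic from mere pairwise disjointness; it needs the already-established lower bound of (i) on a sibling (disjointness plus $|J^{\word{w}{\tilde w}}|\gtrsim |J^{\word{w}{w}}|$ gives $|J^{\word{w}{w}}|/|J^{\word{w}{}}|\leq L/(L+1)$), so the logical order of the two conclusions matters. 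With those clarifications your outline is consistent with the cited proof.
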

\begin{thm}[Existence and Uniqueness of the Fixed Point]\label{unimodal-fix-point}
For any unimodal permutation $\upsilon$ there exists a unique $\RU$-fixed point $f_*=f_{*,\upsilon}\in \U_{\Omega_x}$, i.e.
\[\RU f_*=f_*.\]
\end{thm}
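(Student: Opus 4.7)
My plan is to follow the classical Sullivan--McMullen approach, working in the broader ambient category of polynomial-like germs and producing $f_*$ as the unique attractor of $\RU$ inside the hybrid class determined by $\upsilon$. The first step is to upgrade the real a priori bounds of Theorem~\ref{real-ap-bounds} to complex a priori bounds: for any infinitely renormalisable $f\in\U_{\Omega_x,\upsilon}$ and all sufficiently large $n$, I want $\o{p}{(\RU^n f)}$ to restrict to a polynomial-like map whose fundamental annulus has modulus bounded below uniformly in $n$ and in $f$. I would obtain this by applying Koebe distortion to the holomorphic branches of $f^{-p}$ on $\Omega_x$ and exploiting the real bounds on consecutive renormalisation intervals to keep the nested pull-backs definitely bounded away from the critical value. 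The uniform modulus then gives precompactness of $\{\RU^n f\}_{n\geq 0}$ in the Carath\'eodory topology on polynomial-like germs, and a Schauder fixed-point argument applied to the convex hull of the $\omega$-limit set produces a fixed point $f_*$ of combinatorial type $\upsilon$. Assuming $\Omega_x$ is chosen small enough to sit inside the natural domain of analyticity of $f_*$ (as the footnote after the definition of $\U$-maps already anticipates), this $f_*$ lies in $\U_{\Omega_x}$.

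For uniqueness I would invoke Sullivan's pull-back argument combined with McMullen's rigidity theorem. The key assertion is that $\RU$ is a uniform contraction of the Teichm\"uller metric on each hybrid class of polynomial-like germs of combinatorics $\upsilon$: pulling back an invariant Beltrami differential through the branches defining $\RU$ strictly decreases its $L^{\infty}$-norm, with a contraction factor depending only on the complex bounds. Two $\RU$-fixed points of type $\upsilon$ must lie in a common hybrid class, since both are polynomial-like with the same combinatorics and hence quasi-conformally conjugate by the Douady--Hubbard straightening, and the Teichm\"uller contraction then forces them to be quasi-conformally, hence affinely, conjugate. The normalisation $c_1=1$, $c_2=-1$ built into the definition of $\U_{\Omega_x}$ removes the residual affine ambiguity and yields $f_{*,\upsilon}$.

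The hardest part is the Teichm\"uller contraction itself. Getting it cleanly requires more than compactness: one needs the complex bounds to be sufficiently robust under iterated pull-back so that the pulled-back Beltrami differentials remain supported in a region where the pull-back strictly shrinks them. This is where the genuinely non-trivial geometry of unimodal renormalisation enters, and where, historically, essentially all of the difficulty in Theorem~\ref{unimodal-fix-point} has concentrated; a secondary technical point is verifying that the fixed point produced in the polynomial-like category is in fact $\RR$-symmetric and factors through $Q\circ\ii$ as required by the definition of $\U_{\Omega_x}$, which I would handle by observing that the Schauder construction can be carried out equivariantly with respect to complex conjugation.
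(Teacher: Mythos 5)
The paper does not prove Theorem~\ref{unimodal-fix-point}. It is quoted, alongside Theorems~\ref{real-ap-bounds} and~\ref{unimodal-hyperbolicity}, as a background result from the unimodal renormalisation literature (Sullivan, McMullen, Lyubich; the authors point the reader to~\cite{dMvS} for the surrounding theory), and the body of the paper uses it as a black box. So there is no paper argument to compare yours against; what can be evaluated is whether your sketch would hold up as a reconstruction of the classical proof.

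As such a reconstruction it is in the right family but has two genuine gaps. First, the existence step. You propose applying Schauder's theorem to the convex hull of the $\omega$-limit set of $\{\RU^n f\}$. The space of polynomial-like germs with a lower modulus bound, equipped with Carath\'eodory convergence, is not a convex subset of a topological vector space, so Schauder does not apply in any direct way, and taking a ``convex hull'' is not meaningful there. The standard routes to existence are different: Sullivan and McMullen extract a fixed point from the compactness of the \emph{tower} of renormalisations together with the rigidity of limits, while Lyubich's later treatment deduces existence from hyperbolicity of the renormalisation horseshoe. Second, the uniqueness step as you state it overclaims: Sullivan's pull-back argument gives \emph{non-expansion} of the Teichm\"uller metric on a hybrid class, not a uniform strict contraction. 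Upgrading this to strict contraction is precisely McMullen's ``dynamic inflexibility'' theorem (or, in Lyubich's approach, a Schwarz-lemma argument in a suitable Banach manifold), and you cannot simply assert a contraction factor ``depending only on the complex bounds'' without invoking that machinery. With those two steps replaced by the actual tower/rigidity and inflexibility arguments, the outline---complex a priori bounds via Koebe plus real bounds, compactness in the Carath\'eodory topology, straightening to place competing fixed points in one hybrid class, and real-symmetry handled by performing the whole construction equivariantly under complex conjugation---is the right skeleton of the Sullivan--McMullen proof that the paper is implicitly importing.
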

\begin{thm}[Hyperbolicity of the Fixed Point]\label{unimodal-hyperbolicity}
For any $\upsilon$ the fixed point $f_*$ is hyperbolic with a codimension one stable manifold in $\U_{\Omega_x}$.
\end{thm}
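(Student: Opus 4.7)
\emph{Plan.} This is a classical result in one-dimensional renormalisation theory; the approach I would take follows Sullivan--McMullen--Lyubich in the period-doubling case, as extended by de Faria--de Melo--Pinto to arbitrary stationary combinatorics. The proof has three essentially separate ingredients: compactness of the linearisation, an explicit unstable direction, and a rigidity argument that pins down its dimension.

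First, I would upgrade Theorem~\ref{real-ap-bounds} to complex a priori bounds: for any $f\in\U_{\Omega_x,\perm}$ and all $n$ sufficiently large, $\RU^n f$ admits a quadratic-like (polynomial-like) extension with a definite modulus of the surrounding annulus. Since $f_*$ is itself a fixed point, these bounds hold for every iterate $\RU^n f_*$ uniformly. Montel's theorem then forces the family $\{\RU^n\}$ to be normal in a neighbourhood of $f_*$, so some iterate $D\RU^k(f_*)$ is a compact operator on $T_{f_*}\U_{\Omega_x}$. Its spectrum consists of isolated eigenvalues of finite multiplicity accumulating only at $0$.

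Next, to produce an expanding direction, I would intersect the renormalisation picture with the real quadratic family $q_t(x)=1-tx^2$. Monotonicity of topological entropy in $t$ shows that the set of $t$ whose map is infinitely renormalisable of type $\perm$ meets the quadratic family at an isolated parameter $t_*$, and an elementary transversality argument along this one-parameter family produces an eigenvector of $D\RU(f_*)$ with modulus greater than one. Hence $\dim E^u\ge 1$.

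Finally, the key step (and the main obstacle) is to bound $\dim E^u$ from above by $1$ and to identify the local stable manifold with a codimension-one submanifold. For this I would invoke the rigidity theorem for infinitely renormalisable unimodal maps of stationary combinatorics: any two such maps (with the same combinatorics) are hybrid equivalent and in fact their renormalisations converge to $f_*$ at a uniform geometric rate. The hybrid class of $f_*$ in the space of quadratic-like germs is a complex-analytic codimension-one submanifold (its external class is fixed by combinatorics while its internal modulus is one-dimensional), and the convergence rate forces it to agree locally with the stable set of $f_*$. Putting compactness together with this identification gives $\dim E^u=1$ and a codimension-one $C^\omega$ stable manifold. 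The rigidity/contraction step is the serious one; once it is in hand, the rest is a standard consequence of spectral theory of compact operators and transversality with the quadratic family.
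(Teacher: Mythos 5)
The paper does not prove this theorem. Like Theorems~\ref{real-ap-bounds} and~\ref{unimodal-fix-point} immediately preceding it, it is stated as classical background from one-dimensional renormalisation theory and used as a black box (note the explicit citation of~\cite{dMvS} for Theorem~\ref{real-ap-bounds}). There is therefore no in-paper argument to compare your sketch against. With that said, your outline does track the standard proof (Sullivan--McMullen--Lyubich for period doubling, de Faria--de Melo--Pinto for arbitrary stationary combinatorics), and the three ingredients you isolate --- complex \emph{a priori} bounds giving compactness of (an iterate of) the linearisation, exponential contraction along hybrid classes giving a codimension-one local stable manifold, and transversality within the real quadratic family giving $\dim E^u\ge 1$ --- are the right ones. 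Two details are compressed more than the literature actually allows. First, Montel normality of $\{\RU^n\}$ is not by itself compactness of the derivative operator on $T_{f_*}\U_{\Omega_x}$; the standard fix is that complex bounds force the renormalisations to extend to a definitely larger domain with bounded norm, so the operator factors through a compact inclusion of Banach spaces of bounded holomorphic maps. Second, entropy monotonicity is not the usual route to the unstable eigenvector: the expanding direction is more commonly read off from the geometric shrinking of the renormalisation windows in $q_t$, and it is that shrinking rate which the transversality argument actually encodes. Neither caveat is fatal --- this is the expected level of compression for a sketch of a result the paper itself delegates to the literature.
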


\subsection{H\'enon-like Maps}\label{subsect:henonlike}
Let $\bar\e>0$. Let $\T_{\Omega}(\bar\e)$ denote the space of maps $\e\in C^\omega(B,\RR_{\geq 0})$, which satisfy 
\begin{enumerate}
\item $\e(x,0)=0$;
\item $\del_y\e\neq 0$;
\item $\e$ admits a holomorphic extension to $\Omega$;
\item $|\e|_{\Omega}\leq \bar\e$, where $|\!-\!|_{\Omega}$ denotes the sup-norm on $\Omega$. 
\end{enumerate}
Such maps will be called \emph{thickenings} or \emph{$\bar\e$-thickenings} if we want to emphasise it's thickness $\bar\e>0$.

Let $\H_{\Omega}(\bar\e)$ denote the space of diffeomorphisms onto their images, $F\in \Emb^\omega(B,\RR^2)$, admitting a holomorphic extension to $\Omega$, expressible as $F=(f\circ\pi_x-\e,\pi_x)$ where $f\in \U_{\Omegax}$ and
$\e\in\T_{\Omega}(\bar\e)$.
Such maps will be called \emph{parametrised H\'enon-like maps} with parametrisation $(f,\e)$. We will just write $F=(\phi,\pi_x)$ when the
parametrisation is not explicit. In the current setting we will simply call them H\'enon-like maps. We let $\H_\Omega(0)$ denote the subspace of the boundary of $\H_\Omega$ consisting of maps whose thickening is identically zero. We call such maps \emph{degenerate H\'enon-like maps}.

Observe that, for all $\Omega$, there is an imbedding $\uline{\i}\colon \U_{\Omega_x}\to\H_\Omega(0)$ given by
\[\uline{\i}\colon f\mapsto (f\circ\pi_x,\pi_x).\]
Therefore the renormalisation operator $\RU$ induces an operator on its image under $\uline{\i}$. A dynamical extension of this
operator was constructed in~\cite{Haz1}. More precisely:
\begin{thm}[see~\cite{Haz1}]\label{R-construction}
There are constants $C,\bar\e_0>0$ and a domain $\Omega=\Omega_x\times\Omega_y\subset\CC$, depending upon $\upsilon$, such that the following holds: 
for any $0<\bar\e<\bar\e_0$ there is a subspace $\H_{\Omega,\upsilon}(\bar\e)$ of $\H_{\Omega}(\bar\e)$ containing $\uline{\i}(\U_{\Omega_x,\upsilon})$ and a dynamically defined continuous operator 
\[\RH\colon \H_{\Omega,\upsilon}(\bar\e)\to \H_{\Omega}(C\bar\e^p)\subset \H_{\Omega}(\bar\e)\]
which is a continuous extension of $\uline{i}_*\RU$.
\end{thm}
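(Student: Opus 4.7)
The plan is to extend the construction from the period-doubling setting of \cite{dCML} to general stationary combinatorics of period $p$, as carried out in \cite{Haz1}.

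The first step is to identify an invariant region. For a $\U$-map $f\in\U_{\Omega_x,\upsilon}$, renormalisability provides the intervals $J^0,\ldots,J^{p-1}$ with the prescribed combinatorics and $f^p(J^0)\subset J^0$. Lifting these to rectangles $B^i\subset B$ (where $B^i$ is bounded horizontally by $J^i$ and vertically by $J^{i-1 \bmod p}$), the degenerate embedding $\uline{\i}(f)$ satisfies $\uline{\i}(f)(B^i)\subset B^{i+1\bmod p}$ and in particular $\uline{\i}(f)^p(B^0)\subset B^0$. Persistence under small perturbation yields, for every $F\in\H_\Omega(\bar\e)$ sufficiently close to the degenerate slice, analogous rectangles $B^i=B^i(F)$ with $F^p(B^0)\subset B^0$. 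The subspace $\H_{\Omega,\upsilon}(\bar\e)$ is defined as the set of $F$ for which this combinatorial picture persists.

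The second step is to construct a change of coordinates $H\colon B\to B^0$ which conjugates $F^p|_{B^0}$ back to H\'enon-like form on $B$. In the unimodal case this is the affine bijection $h\colon J\to J^0$. For H\'enon-like maps, because $\pi_y\circ F=\pi_x$, the successive iterates introduce nontrivial $y$-dependence in every coordinate of $F^p$. One therefore takes $H$ to be the composition of a horizontal affine bijection (analogous to $h$) with a nonlinear $y$-correction $y\mapsto y+v(x,y)$, where $v$ is determined by the intermediate $x$-components of the iterates of $F$. The correction $v$ is chosen so that the rescaled map $\RH F := s\circ H^{-1}\circ F^p\circ H\circ s^{-1}$ (with $s$ the affine rescaling taking $B^0$ to $B$) has the form $(\tilde f(x)-\tilde\e(x,y),x)$.

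The third step is to estimate the thickening $\tilde\e$. Each application of $F$ shifts the $y$-coordinate into $x$, so when expanding $F^p$ as a sum of compositions, the only contributions that fail to be absorbed by the straightening $H$ into the new unimodal part $\tilde f$ are those in which the $\e$-term is hit at every one of the $p$ iterations. This yields the bound $|\tilde\e|_\Omega \leq C\bar\e^p$. Continuity of $\RH$ then follows from the continuous dependence of $B^0(F)$, $H$ and $s$ on $F$, together with the standard compactness properties of bounded holomorphic families on $\Omega$.

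The main obstacle is the construction of $H$ and the sharp tracking of the $\e$-expansion of $F^p$. A naive change of coordinates absorbing only the leading $\e$-order would give a bound of the form $C\bar\e$, which is useless for iterating $\RH$; it is essential that $H$ be tailored to absorb all lower-order $\e$-corrections, so that only the pure $p$-fold product of thickenings survives. A secondary technical point is to choose the domains $\Omega_x$ and $\Omega_y$, in a manner depending on $\upsilon$, so that $\RH F$ admits a holomorphic extension back to the same $\Omega$; this is what fixes $\Omega$ and $\bar\e_0$ in the statement.
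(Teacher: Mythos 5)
The theorem you are asked to prove is imported from~\cite{Haz1}; the present paper does not prove it, so there is no in-text argument to compare against. Your sketch does follow the broad outline of the de Carvalho--Lyubich--Martens construction of~\cite{dCML}, as extended in~\cite{Haz1}: locate invariant rectangles over the intervals $J^i$, straighten the first return by a non-affine change of variables, then estimate the residual thickening. Steps~1 and~3 (as targets) are in the right ballpark, but step~2 describes the change of variables incorrectly, and this is the heart of the construction.

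You take $H$ to be a ``horizontal affine bijection'' composed with a ``nonlinear $y$-correction $y\mapsto y+v(x,y)$'', i.e.\ a vertical shear whose $y$-component depends nonlinearly on $(x,y)$. The actual scope maps have exactly the opposite structure: they are horizontal non-affine diffeomorphisms (the $y$-coordinate affine in $y$ alone; the nonlinearity sitting in the $x$-coordinate) composed with affine rescalings. This is recorded in the present paper itself: Proposition~\ref{scopeestimate} gives
$\MT_{m,n}(z)=\tau_m+\sigma_{m,n}\bigl(s_{m,n}(x+r_{m,n}(z-\tau_{n+1}))+t_{m,n}y,\; y\bigr)$,
so $\pi_y\circ\MT_{m,n}$ is affine in $y$ only, while the non-affine remainder $r_{m,n}$ appears only in the $x$-component. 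A map of your form $H(x,y)=(h(x),\,y+v(x,y))$ is not in this class. The horizontal direction is forced by the identity $\pi_y\circ F=\pi_x$: the $y$-coordinate of any iterate is a previous $x$-coordinate and is already straight up to affine rescaling; what needs straightening is the $x$-coordinate. Concretely, for $p=2$ one uses $H^{-1}(x,y)=\bigl(f(x)-\e(x,y),\,y\bigr)$, inverting $x\mapsto f(x)-\e(x,y)$ fiberwise in $x$, and then $\pi_y\bigl(H^{-1}\circ F^2\circ H\bigr)=\pi_x$ holds automatically. If you instead force $\pi_y\bigl(H^{-1}F^pH\bigr)=\pi_x$ with a vertical correction, the implicit equation determines $v$ only along the graph of the renormalised map, leaving an uncontrolled extension problem to a fiberwise diffeomorphism of $B$; you would have to redo the entire argument to show such a choice exists and has the compactness and analyticity needed.

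A secondary issue is your heuristic for the bound $C\bar\e^p$ -- ``only those contributions in which the $\e$-term is hit at every one of the $p$ iterations survive''. This names the right answer but not the mechanism. The bare iterate already has $\del_y\pi_x(F^p)=O(\bar\e)$, a single power; the improvement to $O(\bar\e^p)$ is created entirely by the conjugation $H$, which is designed so that the lower-order $y$-dependence cancels and $\del_y\pi_x(\RH F)$ factors as a product of $p$ terms each comparable to $\del_y\e=O(\bar\e)$. In other words, the $p$-th power bound is not read off from a combinatorial expansion of $F^p$; it is the \emph{purpose} of the horizontal non-affine coordinate change, which is why step~2 must be corrected before step~3 can be carried out.
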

This is called the \emph{H\'enon renormalisation operator}, or simply the \emph{renormalisation operator}, on $\H_{\Omega,\upsilon}(\bar\e)$.
\begin{rmk}\label{rmk:nonaffine}
As in the unimodal case $\RH$ is expressible as $\RH F=\MT^{-1}\circ \o{p}{F}\circ \MT$ where $\MT=\MT(F)\colon B\to B$. However $\MT$ is a non-affine coordinate change which is determined by the dynamics of $F$
(see~\cite{Haz1} for more details). This was required so that $\RH F$ again had a parametrisation.
\end{rmk}
\begin{thm}[see~\cite{Haz1}]\label{R-convergence}
There exists a $\bar\e_0>0$ such that for all $0<\bar\e<\bar\e_0$ the renormalisation operator
\[\RH\colon \H_{\Omega,\upsilon}(\bar\e)\to\H_{\Omega}(\bar\e)\]
has a unique fixed point $F_*$. Moreover $F_*=(f_*\circ\pi_x,\pi_x)$ where $f_*$ is the fixed point of $\RU$ and $F_*$ is hyperbolic with a codimension one stable manifold.
\end{thm}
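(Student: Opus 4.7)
The plan is to exploit the superexponential contraction in the \emph{thickness} direction provided by Theorem~\ref{R-construction}, combined with the already-established hyperbolicity of the unimodal renormalisation operator $\RU$ at $f_*$ given by Theorems~\ref{unimodal-fix-point} and~\ref{unimodal-hyperbolicity}. The key structural observation is that the degenerate slice $\uline{\i}(\U_{\Omega_x,\upsilon})\subset \H_{\Omega,\upsilon}(\bar\e)$ is $\RH$-invariant, and on it $\RH$ restricts to $\uline{\i}_*\RU$.

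For existence and uniqueness, I would first note that $F_*=\uline{\i}(f_*)=(f_*\circ\pi_x,\pi_x)$ is a fixed point of $\RH$ simply because $f_*$ is a fixed point of $\RU$. For uniqueness, suppose $F\in\H_{\Omega,\upsilon}(\bar\e)$ is any fixed point with thickness $t=|\e_F|_\Omega$. By Theorem~\ref{R-construction} applied to $F=\RH F$, one has $t\le C t^p$. Since $p\ge 2$, this forces either $t=0$ or $t\ge C^{-1/(p-1)}$, and choosing $\bar\e_0<C^{-1/(p-1)}$ eliminates the second alternative. Hence every fixed point in $\H_{\Omega,\upsilon}(\bar\e)$ lies on the degenerate slice, where uniqueness reduces to uniqueness of the unimodal fixed point $f_*$.

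For hyperbolicity, I would split the tangent space $T_{F_*}\H_{\Omega,\upsilon}(\bar\e)$ as a direct sum of the lift $d\uline{\i}(T_{f_*}\U_{\Omega_x,\upsilon})$ and a complementary \emph{thickening} direction consisting of infinitesimal perturbations $\dot\e\in\T_\Omega(\bar\e)$. Since the degenerate slice is $\RH$-invariant, $D\RH(F_*)$ is block-upper-triangular with respect to this splitting. The diagonal block along the slice is $D\RU(f_*)$, which by Theorem~\ref{unimodal-hyperbolicity} is hyperbolic with a one-dimensional unstable direction and codimension-one stable manifold. The diagonal block in the thickening direction, obtained by differentiating $\e\mapsto\e_{\RH F}$ at $\e_F=0$, must have operator norm zero: the pointwise bound $|\e_{\RH F}|_\Omega\le C|\e_F|_\Omega^p$ with $p\ge 2$ shows the dependence of $\e_{\RH F}$ on $\e_F$ is at most quadratic, so its linearisation at the degenerate slice vanishes. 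A block-upper-triangular operator whose first diagonal block is hyperbolic and whose second diagonal block is a strict contraction is itself hyperbolic with the same unstable dimension, so $F_*$ inherits the desired codimension-one stable manifold.

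The main technical obstacle will be handling the non-affine, $F$-dependent coordinate change $\MT(F)$ noted in Remark~\ref{rmk:nonaffine}: because $\MT$ varies with $F$, the derivative $D\RH(F_*)$ is not simply conjugation by a fixed map, so some care is required when computing the diagonal blocks and verifying that the off-diagonal block is bounded in the appropriate operator norm. A related subtlety is promoting the nonlinear bound $|\e_{\RH F}|_\Omega\le C|\e_F|_\Omega^p$ into a genuine linear contraction estimate on the thickening block; this needs the construction of $\RH$ in~\cite{Haz1} to depend continuously on $F$ in a neighbourhood of $F_*$, so that the superexponential contraction on thickness translates into a superexponentially small spectral radius of the linearised operator.
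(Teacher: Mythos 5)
The paper does not prove Theorem~\ref{R-convergence} itself; it cites~\cite{Haz1} for it (with the period-doubling case in~\cite{dCML}). Your proposal correctly reconstructs the argument used in that reference: the degenerate slice $\uline{\i}(\U_{\Omega_x,\upsilon})$ is $\RH$-invariant, the superexponential thickness contraction of Theorem~\ref{R-construction} both forces any fixed point of thickness $t<\bar\e_0$ to satisfy $t=0$ and annihilates the linearisation of $\RH$ at $F_*$ in the thickening direction, and hyperbolicity with a codimension-one stable manifold then lifts from $D\RU(f_*)$ through the block-triangular structure you describe, with exactly the technical subtleties (dependence of $\MT(F)$ on $F$, boundedness of the off-diagonal block) that you flag at the end.
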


\subsection{The Scope Maps}\label{subsect:scope}
As was noted in the above Remark~\ref{rmk:nonaffine}, the renormalisation $\RH F$ of $F$ is the \emph{non-affine} change of coordinates of the first return map of $F$ to a certain subdomain of $B$. This coordinate change $\MT=\MT(F)\colon B\to B$ is called the \emph{scope function}. In fact if we set $\oo{0}{\MT}=\MT$ and $\oo{w}{\MT}=\o{w}{F}\circ\MT$ for $w=1,\ldots,p-1$ then $\oo{w}{\MT}$ will be called the \emph{$w$-th scope function}, where $w\in W$. 

Now let $\I_{\Omega,\perm}(\bar\e)\subset \H_{\Omega,\perm}(\bar\e)$ denote the subspace of infinitely renormalisable H\'enon-like maps. Given $F\in\I_{\Omega,\upsilon}(\bar\e)$ we will denote the $n$-th renormalisation $\RH^n F$ by $F_n$. For $w\in W$ let $\oo{w}{\MT_n}=\o{w}{F_n}\circ\MT(F_n)\colon \Dom(F_{n+1})\to \Dom(F_{n})$ be the $w$-th scope function of $F_n$ as defined above, where $\Dom(F_n)$ denotes the domain of $F_n$. Then for $\word{w}{}=w_0\ldots w_n\in W^*$ the map
\[\MT^{\word{w}{}}=\MT_0^{w_0}\circ\ldots\circ \MT_n^{w_n}\colon \Dom(F_{n+1})\to \Dom(F_0)\]
is called the \emph{$\word{w}{}$-scope function}. Let $\uline\MT=\{\MT^{\word{w}{}}\}_{\word{w}{}\in W^*}$ denote the collection of all 'scope functions.

\subsection{The Renormalisation Cantor Set}
We define the \emph{renormalisation Cantor set}, $\Cantor=\Cantor(F)$, associated to $F$ by
\[\Cantor=\bigcap_{n\geq 0}\bigcup_{\word{w}{}\in W^n}\MT^{\word{w}{}}(B).\]
In~\cite{Haz1} a homeomorphism between $\Cantor$ and $\bar W$ was constructed which conjugates the action of $F$ with the action of addition by $1$ defined above. 
Let us denote the cylinder sets of $\Cantor$ under the action of $F$ by $\Cantor^{\word{w}{}}$. That is $\Cantor^{\word{w}{}}=\Cantor\cap \MT^{\word{w}{}}(B)$. 
Then the collection $\uline\Cantor=\{\Cantor^{\word{w}{}}\}_{\word{w}{}\in W^*}$ has the following structure
\begin{enumerate}
\item $F(\Cantor^{\word{w}{}})=\Cantor^{1+\word{w}{}}$ for all $\word{w}{}\in W^*$;
\item $\Cantor^{\word{w}{}}$ and $\Cantor^{\word{\tilde w}{}}$ are disjoint for all $\word{w}{}\neq \word{\tilde w}{}$ of the same length;
\item the disjoint union of the $\Cantor^{\word{w}{w}}$, $w\in W$, is equal to $\Cantor^{\word{w}{}}$, for all $\word{w}{}\in W^*$;
\item $\Cantor =\bigcup_{\word{w}{}\in W^n}\Cantor^{\word{w}{}}$ for each $n\geq 1$.
\end{enumerate} 
It was also shown each point $z\in\Cantor$ corresponds to a unique element $\word{w}{}$ of the infinite adding machine $\bar W$. We will call the word $\word{w}{}$ the \emph{address} of $z$. In particular
we define the \emph{tip} $\tau=\tau(F)$ to be the point in $\Cantor$ corresponding to the word $0^\infty$. In other words
\[\tau=\bigcap_{n\geq 1}\MT^{\word{0^n}{}}(B).\]
This is the point which in~\cite{dCML} and~\cite{Haz1} replaced the role of the critical value in the renormalisation theory for unimodal maps.

As the action of $F$ on $\Cantor$ is metrically isomorphic to the adding machine, $\Cantor$ has a unique $F$-invariant measure, $\mu$.
The \emph{Average Jacobian} $b=b(F)$ is then defined by
\begin{equation*} 
b(F)=\exp\int \log|\jac F|d\mu.
\end{equation*}
Now we can state the main result of~\cite{Haz1}.
\begin{thm}\label{thm:universality}
Given $F\in\I_{\Omega,\upsilon}(\bar\e_0)$ there exists a universal $a\in C^\omega(J,\RR)$ and universal $0<\rho<1$, depending upon $\upsilon,\Omega$ only, such that
\begin{equation}
F_n(x,y)=(f_n(x)-b^{p^n}a(x)y(1+\bigo(\rho^n)),x)
\end{equation}
where $f_n$ are unimodal maps converging exponentially to $f_*$, the fixed point of renormalisation of combinatorial type $\upsilon$.
\end{thm}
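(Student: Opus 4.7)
The plan is to combine hyperbolicity of $\RH$ at $F_*$ with a bounded-distortion analysis of the Jacobian in order to extract the universal leading-order behaviour of $\e_n$. The argument splits into three pillars: exponential convergence of the unimodal factor, the multiplicative identity $b_n=b^{p^n}$, and extraction of the universal function $a$.

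First, Theorem~\ref{R-convergence} gives that $F_*=(f_*\circ\pi_x,\pi_x)$ is a hyperbolic fixed point of $\RH$ with a codimension one stable manifold. Because $F$ is infinitely renormalisable, its orbit $F_n=\RH^n F$ stays near this stable manifold and hence converges to $F_*$ exponentially in $\H_{\Omega}(\bar\e)$ at some rate $\rho\in(0,1)$, determined by the weakest contracting eigenvalue of $d\RH|_{F_*}$. Reading off the parametrisations $F_n=(f_n\circ\pi_x-\e_n,\pi_x)$ this yields $|f_n-f_*|_{\Omega_x}=\bigo(\rho^n)$ and $|\e_n|_\Omega=\bigo(\rho^n)$, handling the unimodal factor.

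Second, I would establish $b_n=b^{p^n}$. Writing $F_{n+1}=\MT_n^{-1}\circ\o{p}{F_n}\circ\MT_n$ and applying the chain rule gives
\[\log|\jac F_{n+1}(z)|=\sum_{k=0}^{p-1}\log|\jac F_n(\o{k}{F_n}\MT_n z)|+\log|\jac\MT_n(F_{n+1}z)|-\log|\jac\MT_n(z)|.\]
The $\MT_n$ boundary terms form a coboundary and so vanish when integrated against any $F_{n+1}$-invariant measure. Integrating against $\mu_{n+1}$ and using $(\MT_n)_*\mu_{n+1}=p\cdot\mu_n|_{\Cantor^0(F_n)}$ together with the cyclic $F_n$-action on the level-one cylinders yields $\log b_{n+1}=p\log b_n$, and hence $b_n=b^{p^n}$.

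Third, and this is the crux, factor $\e_n(x,y)=y\,g_n(x,y)$ using $\e_n(x,0)=0$, set $a_n(x,y):=g_n(x,y)/b^{p^n}$, and prove that $a_n$ converges uniformly on $B$ to a universal $a=a(x)\in C^\omega(J,\RR)$ at rate $\rho^n$. Applying the renormalisation formula and the product rule to $\partial_y\e_{n+1}$ expresses $\partial_y\e_{n+1}(x,0)$ as a product of $\partial_y\e_n$ evaluated at $p$ successive orbit points, weighted by bounded scope distortion from $\MT_n$; dividing by $b_{n+1}=b_n^p$ turns this into a cocycle identity for $a_n$ whose contractive behaviour is inherited from the hyperbolicity of $\RH$ at $F_*$. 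Standard hyperbolic contraction then gives uniform convergence at rate $\rho^n$ to a universal section $a$, since the limiting cocycle depends only on $f_*$ and on the scope change at $F_*$, both determined by $\upsilon$. Combining the three pieces, $\e_n(x,y)=b^{p^n}y\,a_n(x,y)=b^{p^n}a(x)y(1+\bigo(\rho^n))$, which is the claimed formula.

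The principal obstacle lies in step three: establishing the $y$-independence of the limit $a$ and the universal form of the limiting cocycle. This requires analysing higher $y$-jets of $\e_n$ along the Cantor set and exploiting that $F_*$ has identically zero thickening, so $\partial_y^2\e_n$ is one renormalisation order smaller than $\partial_y\e_n$, which forces the $y$-dependence of $a_n$ to shrink at rate $\rho^n$.
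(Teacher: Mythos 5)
This theorem is imported from~\cite{Haz1} without proof in the present paper (it is stated as ``the main result of~\cite{Haz1}''), so there is no in-paper proof to compare against; I am therefore evaluating your sketch against the known proof strategy from~\cite{dCML} and~\cite{Haz1}. Your outline follows that blueprint: exponential convergence to $F_*$ for the unimodal factor, a measure-theoretic coboundary identity for $b_{n+1}=b_n^p$, and a cocycle analysis of the normalised Jacobian to extract $a$. Steps one and two are correct. In particular the coboundary argument is exactly the right way to get $b_n=b^{p^n}$: the $\MT_n$ boundary terms integrate to zero against any $F_{n+1}$-invariant measure, and the pushforward $(\MT_n)_*\mu_{n+1}=p\,\mu_n|_{\Cantor^0_n}$ together with the cyclic action on level-one cylinders gives $\log b_{n+1}=p\log b_n$.

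Step three is where a genuine gap lies (which you flag yourself). Invoking ``standard hyperbolic contraction'' and claiming the contractive behaviour is ``inherited from the hyperbolicity of $\RH$'' is not a proof: the operator $a_n\mapsto a_{n+1}$ defined by the scope-weighted transfer relation is a different object from $d\RH|_{F_*}$ and its contraction must be established independently, using the explicit structure of the scope maps (Proposition~\ref{scopeestimate}) and the analytic a priori bounds. The $y$-independence is also handled too lightly. Your stated heuristic, that $\partial_y^2\e_n$ is ``one renormalisation order smaller'' than $\partial_y\e_n$ because $F_*$ has zero thickening, is at best ambiguous and at worst misleading: a Cauchy estimate from $|\e_n|_\Omega\le C\bar\e^{p^n}$ controls $\partial_y^2\e_n$ only at scale $\bar\e^{p^n}$, whereas $|\partial_y\e_n|\asymp b^{p^n}$ and $b$ may be far smaller than $\bar\e$, so this does not give the needed relative $\bigo(\rho^n)$ bound on its own. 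The correct mechanism is structural, not a consequence of $\e_*\equiv 0$: since $F_n(x,y)=(f_n(x)-\e_n(x,y),x)$, the $y$-dependence of any iterate $\o{k}{F_n}$ enters only through the first application of $\e_n$, so each renormalisation step channels the $y$-derivative through a single factor of $\partial_y\e_n$, and higher $y$-jets of $\e_{n+1}$ pick up extra factors of $b_n$, which are super-exponentially small. That is the actual reason $a$ depends only on $x$, and it has to be spelled out.
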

\subsection{The Induced Scope Maps and Cantor Sets}
For any $n>0$ we can construct the functions $\MT^{\word{w}{}}_n=\MT^{\word{w}{}}(F_n)$, the sets $\Cantor_n^{\word{w}{}}=\Cantor^{\word{w}{}}(F_n)$ and the points
$\tau_n=\tau(F_n)$ in exactly the same way
as we did above. The number $n$ is called the \emph{height} of $\MT^{\word{w}{}}_n$, $\Cantor_n^{\word{w}{}}$ and $\tau_n$ and the length of $\word{w}{}$ is called the
\emph{depth} of 
$\MT^{\word{w}{}}_n$ and $\Cantor_n^{\word{w}{}}$. 
Let $\uline\MT_n=\{\MT^{\word{w}{}}_n\}_{\word{w}{}\in W^*}$ and $\uline\Cantor_n=\{\Cantor_n^{\word{w}{}}\}_{\word{w}{}\in W^*}$.

\begin{rmk}
We use the terms height and depth to reflect a kind of duality in our construction, reflected in the issue of whether to call the $\MT_n$ telescope maps or microscope maps.
\end{rmk}

As the functions $\MT^{\word{0^{n-m}}{}}_m\colon \Dom(F_{n+1})\to\Dom(F_0)$ will be of particular importance we denote them by $\MT_{m,n}$. In~\cite{Haz1} the following two Propositions were proved.
\begin{prop}\label{F_n-estimate}
Given $F\in\I_{\Omega,\upsilon}(\bar\e)$ its renormalisations $F_n$ have the form 
\[F_n(z)=(\phi_{n}(z),\pi_x(z))\]
and the derivative of the maps $F_n$ have the form 
\[\D{F_n}{z}=\iibyii{\del_x\phi_{n}(z)}{\del_y\phi_{n}(z)}{1}{0}.\]
Moreover there exists constants a constant $C>0$ depending upon $\upsilon$ and $\Omega$ and a universal constant $0<\rho<1$ such that
\begin{enumerate}
\item
$|\del_x\phi_{n}|<C$;
\item
$C^{-1}b^{p^n}<|\del_y\phi_{n}|<Cb^{p^n}$.
\end{enumerate}
\end{prop}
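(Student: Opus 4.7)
The proposition is a direct corollary of the universality Theorem~\ref{thm:universality} combined with Cauchy estimates. I proceed in three short steps.

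First, the form $F_n = (\phi_n,\pi_x)$ and the shape of $\D{F_n}{z}$ are essentially tautological. By Theorem~\ref{R-construction}, $\RH$ preserves $\H_\Omega$, and every element of $\H_\Omega$ is by definition of the form $(\phi,\pi_x)$. Hence $F_n = \RH^n F$ inherits this form, and the second row of $\D{F_n}{z}$ must be $(1,0)$ because the second coordinate is $\pi_x$.

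Second, I would upgrade the real-analytic bound from Theorem~\ref{thm:universality} to a bound on partial derivatives. Write
\[\phi_n(x,y) = f_n(x) - b^{p^n} a(x) y \bigl(1 + \epsilon_n(x,y)\bigr),\]
where $|\epsilon_n|_\Omega = \bigo(\rho^n)$ and $f_n \to f_*$ exponentially on $\Omega_x$. Since everything extends holomorphically to $\Omega$, Cauchy's estimates on a slightly smaller polydisk $\Omega' = \Omega_x' \times \Omega_y' \Subset \Omega$ still containing $B$ transfer the real-analytic bounds to
\[|\del_x\epsilon_n|_{\Omega'},\ |\del_y\epsilon_n|_{\Omega'} = \bigo(\rho^n), \qquad |f_n'|_{\Omega_x'} \le C.\]
Differentiating directly gives
\[\del_y\phi_n(x,y) = -b^{p^n} a(x)\bigl(1 + \epsilon_n + y\del_y\epsilon_n\bigr) = -b^{p^n} a(x)\bigl(1+\bigo(\rho^n)\bigr),\]
\[\del_x\phi_n(x,y) = f_n'(x) - b^{p^n}\bigl(a'(x) y(1+\epsilon_n) + a(x) y\del_x\epsilon_n\bigr).\]

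Third, I read off the two estimates. For (i), the first term of $\del_x\phi_n$ is uniformly bounded by the previous step, and the second term is $\bigo(b^{p^n})$, so $|\del_x\phi_n|\le C$. For (ii) the upper bound $|\del_y\phi_n| < Cb^{p^n}$ is immediate from $\sup_B|a| < \infty$, while the lower bound $|\del_y\phi_n| > C^{-1}b^{p^n}$ follows from $\inf_B|a| > 0$, after absorbing the factor $(1+\bigo(\rho^n))$ and finitely many initial indices $n$ into the constant $C$.

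\textbf{Main obstacle.} The only genuinely non-routine input is the non-degeneracy $\inf_B |a| > 0$ of the universal profile $a$. This is not a formal consequence of the universality theorem itself; it must be extracted from the explicit construction of $a$ in~\cite{Haz1}, where $a$ arises as (essentially) a normalised $y$-derivative of the thickening at the fixed point, so that the structural condition $\del_y\e\neq 0$ appearing in the definition of $\T_\Omega(\bar\e)$ is precisely what prevents $a$ from vanishing. Once this is in hand, the rest of the proposition is a straightforward combination of Cauchy estimates and the exponential convergence $F_n\to F_*$.
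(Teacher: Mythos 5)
The paper cites this proposition from~\cite{Haz1} without reproducing its proof, so there is no internal argument to compare against; what you are proposing is an independent re-derivation from Theorem~\ref{thm:universality}. As such it has a real gap, located exactly at the point you flag as the ``main obstacle.'' The condition $\del_y\e\neq 0$ in the definition of $\T_{\Omega}(\bar\e)$ is a pointwise non-degeneracy of each individual thickening; it gives no uniform lower bound at any fixed level and, crucially, it does not pass to the rescaled limit defining $a$. Recall that the renormalisation fixed point $F_*=(f_*\circ\pi_x,\pi_x)$ is \emph{degenerate}: its thickening vanishes identically, so $\e_n\to 0$, and $a$ is only recovered after dividing by $b^{p^n}$ and taking a limit. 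A limit of non-vanishing quantities can perfectly well vanish, so the heuristic ``$\del_y\e\neq 0$ is what prevents $a$ from vanishing'' is not a substitute for the explicit identification of $a$ in~\cite{Haz1} (in terms of the presentation functions near the tip) together with a separate analytic check that it has no zero on $J$. Without that input your argument is circular: the lower bound in~(ii) is, after the Cauchy step, equivalent to $\inf_J|a|>0$, so you cannot assume one to infer the other.

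A secondary issue is logical order and uniformity. In~\cite{Haz1} the estimate $|\del_y\phi_n|\asymp b^{p^n}$ is established well before and independently of Theorem~\ref{thm:universality}; it comes from the invariance of $\H_{\Omega}(\bar\e)$ under $\RH$, chain-rule control on the compositions defining $F_n$, and the ergodic-theoretic definition of the average Jacobian on $\Cantor$. Theorem~\ref{thm:universality} is the deep refinement, not the source, so deriving Proposition~\ref{F_n-estimate} from it would be circular inside~\cite{Haz1} even if it is consistent as a post hoc check. That inversion also costs you uniformity: the proposition asks for $C$ depending only on $\upsilon$ and $\Omega$, while your device of ``absorbing finitely many initial $n$ into $C$'' lets $C$ depend on the individual map $F$, because the $\bigo(\rho^n)$ error in Theorem~\ref{thm:universality} only becomes effective for $n$ large. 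To keep $C$ uniform over $\I_{\Omega,\upsilon}(\bar\e_0)$ you would need a separate non-asymptotic bound on $\del_y\phi_n/b^{p^n}$ for small $n$, which is precisely the elementary Jacobian estimate that~\cite{Haz1} proves directly.
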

An application of the Mean Value Theorem gives us the following.
\begin{lem}\label{MVT}
Given $F\in\I_{\Omega,\upsilon}(\bar\e)$ let $F_n$ denote its $n$-th renormalisation. 
For any $z_0,z_1\in \Dom (F_n)$ there exists $\xi, \eta,\in \brl z_0,z_1\brr$, the rectangle spanned by $z_0,z_1$, such that
\begin{align*}
\pi_x(F_{n}z_0)-\pi_x(F_{n}z_1)&=\del_x\phi_{n}(\xi)(\pi_x(z_0)-\pi_x(z_1))+\del_y\phi_{n}(\eta)(\pi_y(z_0)-\pi_y(z_1)) \\
\pi_y(F_{n}z_0)-\pi_y(F_{n}z_1)&=\pi_x(z_0)-\pi_x(z_1).
\end{align*}
\end{lem}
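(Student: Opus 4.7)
The second equation is not really a mean value computation at all: by Proposition~\ref{F_n-estimate}, $F_n$ has the form $F_n(z)=(\phi_n(z),\pi_x(z))$, so the $y$-component of $F_n z_0 - F_n z_1$ is literally $\pi_x(z_0)-\pi_x(z_1)$. So the whole content of the lemma is in the first equation, which is a standard ``split the difference along one coordinate at a time'' argument.

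The plan is to write
\[
\phi_n(z_0)-\phi_n(z_1)=\bigl[\phi_n(\pi_x(z_0),\pi_y(z_0))-\phi_n(\pi_x(z_1),\pi_y(z_0))\bigr]+\bigl[\phi_n(\pi_x(z_1),\pi_y(z_0))-\phi_n(\pi_x(z_1),\pi_y(z_1))\bigr].
\]
The intermediate point $(\pi_x(z_1),\pi_y(z_0))$ sits in the closed rectangle $\brl z_0,z_1\brr$, and each of the two bracketed differences varies only one coordinate. I would then apply the one-variable Mean Value Theorem to the function $x\mapsto \phi_n(x,\pi_y(z_0))$ on the interval $[\pi_x(z_0),\pi_x(z_1)]$ to obtain a $\xi_x$ between $\pi_x(z_0)$ and $\pi_x(z_1)$ with
\[
\phi_n(\pi_x(z_0),\pi_y(z_0))-\phi_n(\pi_x(z_1),\pi_y(z_0))=\del_x\phi_n(\xi_x,\pi_y(z_0))\bigl(\pi_x(z_0)-\pi_x(z_1)\bigr),
\]
and similarly apply it to $y\mapsto \phi_n(\pi_x(z_1),y)$ on $[\pi_y(z_0),\pi_y(z_1)]$ to obtain an $\eta_y$ between $\pi_y(z_0)$ and $\pi_y(z_1)$ with
\[
\phi_n(\pi_x(z_1),\pi_y(z_0))-\phi_n(\pi_x(z_1),\pi_y(z_1))=\del_y\phi_n(\pi_x(z_1),\eta_y)\bigl(\pi_y(z_0)-\pi_y(z_1)\bigr).
\]
Setting $\xi=(\xi_x,\pi_y(z_0))$ and $\eta=(\pi_x(z_1),\eta_y)$, both points lie in $\brl z_0,z_1\brr$ by construction, and summing the two identities gives exactly the claimed formula for the $x$-coordinate.

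There is really no substantive obstacle: the smoothness of $\phi_n$ needed for the one-variable MVT is provided by $F_n\in\H_\Omega(\bar\e)$ being $C^\omega$, and the rectangle $\brl z_0,z_1\brr$ lies in $\Dom(F_n)$ as soon as the two corners do, since $\Dom(F_n)=B$ is itself a rectangle. The only thing worth noting is that the lemma as stated gives the freedom to pick $\xi$ and $\eta$ independently, which is precisely what the coordinate-by-coordinate splitting above produces; one could not, in general, arrange $\xi=\eta$.
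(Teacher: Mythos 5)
The paper offers no written proof of this lemma---it simply introduces it with the phrase ``An application of the Mean Value Theorem gives us the following''---so you have supplied precisely the argument the authors are alluding to. Your coordinate-by-coordinate telescoping through the intermediate corner $(\pi_x(z_1),\pi_y(z_0))$, followed by two one-variable Mean Value Theorem applications and the choice of $\xi$ and $\eta$ on the edges of the rectangle, is correct and is the standard route; the observation that $\Dom(F_n)=B$ is a rectangle (hence contains $\brl z_0,z_1\brr$) and that analyticity of $\phi_n$ gives the needed smoothness is all that remains to check, and you have noted both.
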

\begin{prop}\label{scopeestimate}
Given $F\in\I_{\Omega,\upsilon}(\bar\e)$ the scope maps $\MT_{m,n}=\MT_{m,n}(F)$ have the form 
\begin{equation}\label{eq:scope1}
\MT_{m,n}(z)=\tau_m+D_{m,n}\circ (\id +R_{m,n})(z-\tau_{n+1})
\end{equation}
where $D_{m,n}=\D{\MT_{m,n}}{\tau_{n+1}}$ is the derivative of $\MT_{m,n}$ at the $n$-th tip and $R_{m,n}=\R{\MT_{m,n}}{\tau_{n+1}}$ is a remainder term. More explicitly
\begin{equation}\label{eq:scope2}
\MT_{m,n}(z)=\tau_m+\sigma_{m,n}\ibyii{s_{m,n}(x+r_{m,n}(z-\tau_{n+1}))+t_{m,n}y}{y}
\end{equation}
where
\begin{equation}\label{eq:scope3}
D_{m,n}=\sigma_{m,n}\iibyii{s_{m,n}}{t_{m,n}}{0}{1}, \quad R_{m,n}(z)=\ibyii{r_{m,n}(z)}{0}.
\end{equation}
Moreover there is a constant $C>0$, depending upon $\Omega$ and $\upsilon$ and a universal analytic function $v_*\colon J\to\RR$ and universal constants\footnote{The constant
$a$ is actually $a=a(f_*(c_*))$ where $c_*$ is critical value of $f_*$ and $a(x)$ is the universal one-dimensional real analytic function given by Theorem~\ref{thm:universality}}
$a>0, 0<\rho,\sigma<1$, depending upon $\upsilon$ only, such that for any $0<m<n$ sufficiently large
\begin{enumerate}
\item\label{ineq:scope1}
\[\sigma^{n-m}(1-C\rho^m)<|\sigma_{m,n}|<\sigma^{n-m}(1+C\rho^m)\]
\item\label{ineq:scope2}
\[\sigma^{n-m}(1-C\rho^m)<|s_{m,n}|<\sigma^{n-m}(1+C\rho^m)\]
\item\label{ineq:scope3}
\[ab^{p^m}(1-C\rho^m)<|t_{m,n}|<ab^{p^m}(1+C\rho^m)\]
\item\label{ineq:scope4}
\[|x+r_{m,n}(x,y)-v_*(x)-c_{m}y^2|<C\rho^{n-m},\]
where $c_{m}=c_{m}(F)$ is a constant satisfying $c_{m}=\bigo(\bar\e^{p^m})$.
\end{enumerate}
\end{prop}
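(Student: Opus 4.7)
My plan is to proceed in two stages: first establish the analogous decomposition at the single-step level $\MT_k = \MT_k^0$, and then induct on the composition length $n-m$. The main inputs will be Theorem~\ref{thm:universality} (exponential convergence $F_k \to F_*$ at rate $\rho$), Proposition~\ref{F_n-estimate} (the $y$-derivative of $F_k$ is of size $b^{p^k}$), and the explicit construction of $\MT$ in~\cite{Haz1}.

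\emph{Single-step analysis.} The scope map $\MT_k = \MT(F_k)$ depends continuously on $F_k$ and, in the degenerate case, reduces to the affine rescaling of unimodal renormalisation converging exponentially to the fixed-point rescaling with ratio $\sigma$. Combined with the explicit formula for $\MT$ from~\cite{Haz1}, I would establish that $\MT_k$ admits the form (\ref{eq:scope2})--(\ref{eq:scope3}) with $\sigma_k, s_k = \sigma(1+\bigo(\rho^k))$, $t_k = ab^{p^k}(1+\bigo(\rho^k))$, and $r_k(u,v) = \tilde v_*(u) + c_k v^2 + \bigo(\rho^k)$ for a universal analytic function $\tilde v_*$ and $c_k = \bigo(\bar\e^{p^k})$. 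The $v^2$ form of the $y$-dependent non-affine correction reflects that this piece must vanish in the degenerate limit and enters only through the $y$-Jacobian, whose size is of order $\bar\e^{p^k}$.

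\emph{Induction on $n-m$.} Assuming the decomposition for $\MT_{m+1,n}$, I would compose with $\MT_m$ from Stage 1 on the outside. Setting $(u',v') = \MT_{m+1,n}(z) - \tau_{m+1}$ and expanding, a direct calculation yields $\sigma_{m,n} = \sigma_m \sigma_{m+1,n}$ and $s_{m,n} = s_m s_{m+1,n}$, so (i) and (ii) follow from $\prod_{k\geq m}(1+C\rho^k)=1+\bigo(\rho^m)$. For (iii), the composition produces a relation of the form $t_{m,n} = t_m + \bigo(t_{m+1,n}/\sigma_{m+1,n})$; since $t_{m+1,n} = \bigo(b^{p^{m+1}})$ and $b^{p^{m+1}} = (b^{p^m})^p$ is super-exponentially smaller than $b^{p^m}$, the outer term $t_m$ dominates and gives $t_{m,n}=ab^{p^m}(1+\bigo(\rho^m))$. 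For (iv), the $u$-dependent part of $r_{m,n}$ telescopes across the scope maps with a geometric-in-$(n-m)$ tail converging to the universal function $v_*$, while the $v^2$-coefficient is inherited from the outermost level $m$ and hence is of size $\bigo(\bar\e^{p^m})$.

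\emph{Main obstacle.} The principal technical challenge is the careful bookkeeping in the induction step, particularly showing that the outermost scope map carries the leading-order non-affine $y$-contribution. This crucially exploits two distinct decay rates operating in the composition: the exponential $\rho^k$ convergence of $F_k$ to $F_*$ (controlling the $\sigma, s, r$ terms) and the super-exponential $b^{p^k}$ decay of the Jacobian (controlling the $t$ and $c_k$ terms). Balancing these rates against one another in the composition is what produces the mixed estimates $\rho^m$ in (i)--(iii) and $\rho^{n-m}$ in (iv), and verifying that the composed map retains the precise product structure of (\ref{eq:scope3}) rather than a genuinely two-dimensional non-affine perturbation is the delicate part of the calculation.
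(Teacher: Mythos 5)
The paper does not prove Proposition~\ref{scopeestimate}: it is imported from~\cite{Haz1}, prefaced by ``In~\cite{Haz1} the following two Propositions were proved.''\ So there is no in-paper argument against which to compare your attempt, and you should be consulting~\cite{Haz1} for the actual proof.

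With that caveat, your two-stage plan (single-step decomposition plus induction on the composition length $n-m$) is the natural skeleton and your identification of the two competing decay scales -- exponential $\rho^{k}$ convergence of $F_{k}\to F_{*}$ versus super-exponential $b^{p^{k}}$ decay of the Jacobian -- correctly explains why the error rate in (i)--(iii) is $\rho^{m}$ while in (iv) it is $\rho^{n-m}$. However, one concrete step in your sketch is wrong. With the paper's normalisation $D_{m,n}=\sigma_{m,n}\left(\begin{smallmatrix}s_{m,n}&t_{m,n}\\0&1\end{smallmatrix}\right)$, the identity $D_{m,n}=D_{m}D_{m+1,n}$ gives the tilt recursion $t_{m,n}=t_{m}+s_{m}\,t_{m+1,n}$, with a \emph{bounded} factor $s_{m}\sim\sigma$ in front of the inner term; not $t_{m,n}=t_{m}+\bigo(t_{m+1,n}/\sigma_{m+1,n})$ as you wrote. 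Your version would give a relative correction of order $b^{p^{m}(p-1)}\sigma^{-(n-m-1)}$, which is not uniformly small as $n-m\to\infty$ with $m$ fixed and so does \emph{not} deliver the uniform estimate $|t_{m,n}|=ab^{p^{m}}(1+\bigo(\rho^{m}))$ for all $m<n$; the correct recursion gives a relative error $\bigo(\sigma\,b^{p^{m}(p-1)})$, which is swallowed by $\rho^{m}$. You should also be explicit about the remainder recursion, which has the form $R_{m,n}=R_{m+1,n}+D_{m+1,n}^{-1}\,R_{m}\bigl(D_{m+1,n}(\id+R_{m+1,n})\bigr)$: the claimed geometric $(n-m)$-tail comes from $R_{m}$ being at least quadratic near the tip, so it is squeezed quadratically by $D_{m+1,n}$ and only linearly inflated by $D_{m+1,n}^{-1}$; this must be checked to justify the splitting into a universal $v_{*}(x)$ part and a $c_{m}y^{2}$ part with $c_{m}=\bigo(\bar\e^{p^{m}})$.
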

The quantities $\sigma_{m,n}, s_{m,n}$ and $t_{m,n}$ are called the \emph{scaling}, \emph{squeeze} and \emph{tilt}, respectively, for $\MT_{m,n}$.

\section{Boxings and Bounded Geometry}\label{boxing}
\subsection{Boxing the Cantor Set}
Let $F\in\I_{\Omega,\upsilon}(\bar\e_0)$ and let $\uline\Cantor$ and $\uline\MT$ be as in Section~\ref{prelim}. 
A collection of simply connected open sets $\uline{B}=\{B^{\word{w}{}}\}_{\word{w}{}\in W^*}$ is called a \emph{boxing} of $\uline\Cantor$ with respect to $F$ if 
\begin{b-enumerate}
\item $F(B^{\word{w}{}})\subset B^{1+\word{w}{}}$ for all $\word{w}{}\in W^*$,
\item $B^{\word{w}{}}$ and $B^{\word{\tilde{w}}{}}$ are disjoint for all $\word{w}{}\neq\word{\tilde{w}}{}$ of the same length,
\item the disjoint union of the $B^{\word{w}{w}}$, $w\in W$, is a subset of $B^{\word{w}{}}$, for all $\word{w}{}\in W^*$,
\item $\Cantor^{\word{w}{}}\subset B^{\word{w}{}}$ for all $\word{w}{}\in W^*$,
\end{b-enumerate}
The sets $B^{\word{w}{}}$ are called the \emph{pieces} of the boxing and the \emph{depth} of the piece $B^{\word{w}{}}$ is the length
of the word $\word{w}{}$. The scope functions give us a boxing $\uline B_{can}=\{B^{\word{w}{}}_{can}\}_{\word{w}{}\in W^*}$, where $B^{\word{w}{}}_{can}=\MT^{\word{w}{}}(B)$, which we will call the \emph{canonical boxing}.

Observe that the since the scope functions $\uline{\MT_n}=\{\oo{\word{w}{}}{\MT_n}\}_{\word{w}{}\in W^*}$ for $F_n$ can be written as $\oo{\word{w}{}}{\MT_n}=\MT_{0,n}^{-1}\circ\MT_{0,n}\circ\oo{\word{w}{}}{\MT_n}$ and $\MT_{0,n}\circ\oo{\word{w}{}}{\MT_n}\in \uline{\MT}$, the canonical boxing $\uline{B_{n,can}}$ for $F_n$ is the preimage under $\MT_{0,n}$ of all the pieces contained in $\MT_{0,n}(B)$. Hence the scope maps preserve the canonical boxings of various heights.

There is also another `standard' boxing, which we call the \emph{topological boxing}. The pieces are simply connected domains whose boundary consists of two arcs, one of which
is a segment of the unstable manifold of a particular periodic point and the other consisting of a segment of stable manifold of a different periodic point of the same period. 
These boxings in the period doubling case were first considered in~\cite{dCML} and extended  to arbitrary combinatorial types in~\cite{Haz1}.
\begin{defn}
We say that a boxing $\uline{B}=\{B^{\word{w}{}}\}_{\word{w}{}\in W^*}$ has \emph{bounded geometry} if there exist constants $C>1, 0<\kappa<1$ such that for all $\word{w}{}\in W^*,w,\tilde{w},\in W$,
\begin{align}
C^{-1}\dist(B^{\word{w}{w}},B^{\word{w}{\tilde{w}}})<\diam(B^{\word{w}{w}})<C\dist(B^{\word{w}{w}},B^{\word{w}{\tilde{w}}}) \label{geometry:necessary} \\
\kappa\diam(B^{\word{w}{}})<\diam(B^{\word{w}{w}})<(1-\kappa)\diam(B^{\word{w}{}}) \label{geometry:unnecessary}
\end{align}
We will say that $\Cantor$ has \emph{bounded geometry} if there exists a boxing $\uline B$ of $\uline\Cantor$ with bounded geometry. Otherwise we will say
$\Cantor$ has unbounded geometry.
\end{defn}
\begin{rmk} 
As the results we will prove are actually stronger than mere unbounded geometry. We will show that Property~\ref{geometry:necessary} is violated almost everywhere in one-parameter families of infinitely renormalisable H\'enon-like maps. We believe that any breakdown of Property~\ref{geometry:unnecessary} is much more dependent upon the choice of boxings - in principle we could take any boxing and just enlarge the one containing the tip. The only thing to show would then be whether the return of this box is contained in the original box. 
\end{rmk}
We will use the assumption below in the following sections for expositional simplicity. Its necessity will become clear in Section~\ref{construction} when we describe the construction. 
\begin{enumerate}
\item[(B-5)]\label{property:comparable} $B^{\word{w}{w}}\subset B^{\word{w}{}}_{can}$ for all $w\in W$ and all sufficiently large $\word{w}{}\in W^*$.
\end{enumerate}
This will allow us, given any boxing $\uline B$ of $\uline\Cantor$, to construct induced
boxings $\uline B_n$ at all sufficiently great heights. However below, in Lemma~\ref{lem:not-comparable}, we show this assumption is redundant.
\begin{lem}\label{lem:not-comparable}
Given a boxing $\uline B$ of $\uline\Cantor$ there is a boxing $\uline{\hat B}$ satisfying Property~(B-5) above such that if $\uline{\hat B}$ has unbounded geometry
then $\uline B$ has unbounded geometry. 
\end{lem}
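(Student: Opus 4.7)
The plan is to construct $\uline{\hat B}$ by intersecting each piece $B^{\word{w}{}}$ of the original boxing with the corresponding canonical piece $B_{can}^{\word{w}{}}$ once the depth exceeds a threshold $N_0$. Concretely, set $\hat B^{\word{w}{}} = B^{\word{w}{}}$ when $|\word{w}{}| < N_0$, and for $|\word{w}{}| \geq N_0$ choose $\hat B^{\word{w}{}}$ as a simply connected open topological disk with $\Cantor^{\word{w}{}} \subset \hat B^{\word{w}{}} \subset B^{\word{w}{}} \cap B_{can}^{\word{w}{}}$, the choices being made in an $F$-equivariant and nested manner so that $\hat B^{\word{w}{w}} \subset \hat B^{\word{w}{}}$ for each $w \in W$.

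Verifying the boxing axioms is then routine, using that both $\uline B$ and $\uline{B_{can}}$ are boxings. Property (B-2) follows from $\hat B^{\word{w}{}} \subset B^{\word{w}{}}$; (B-4) and (B-3) are built into the construction. For (B-1), $F(\hat B^{\word{w}{}}) \subset F(B^{\word{w}{}}) \cap F(B_{can}^{\word{w}{}}) \subset B^{1+\word{w}{}} \cap B_{can}^{1+\word{w}{}}$, and the $F$-equivariance of the inductive choice places this image inside $\hat B^{1+\word{w}{}}$. Finally (B-5) is immediate: $\hat B^{\word{w}{w}} \subset B_{can}^{\word{w}{w}} \subset B_{can}^{\word{w}{}}$.

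To establish the stated contrapositive, assume $\uline B$ has bounded geometry. Iterating \eqref{geometry:necessary} and \eqref{geometry:unnecessary} gives $\Cantor$ an intrinsic bounded geometry in the sense that $\diam \Cantor^{\word{w}{}} \asymp \diam B^{\word{w}{}}$: indeed $\diam \Cantor^{\word{w}{}} \geq \dist(\Cantor^{\word{w}{w}}, \Cantor^{\word{w}{\tilde w}}) \geq \dist(B^{\word{w}{w}}, B^{\word{w}{\tilde w}}) \asymp \diam B^{\word{w}{w}} \asymp \diam B^{\word{w}{}}$. Consequently, the sandwich $\Cantor^{\word{w}{w}} \subset \hat B^{\word{w}{w}} \subset B^{\word{w}{w}}$ yields $\diam \hat B^{\word{w}{w}} \asymp \diam B^{\word{w}{w}}$, and both inequalities of \eqref{geometry:necessary} for $\uline{\hat B}$ follow from the chain
\[
\diam \hat B^{\word{w}{w}} \leq \diam B^{\word{w}{w}} \lesssim \dist(B^{\word{w}{w}}, B^{\word{w}{\tilde w}}) \leq \dist(\hat B^{\word{w}{w}}, \hat B^{\word{w}{\tilde w}}) \leq \dist(\Cantor^{\word{w}{w}}, \Cantor^{\word{w}{\tilde w}}) \leq \diam \Cantor^{\word{w}{}} \asymp \diam \hat B^{\word{w}{w}}.
\]
Property \eqref{geometry:unnecessary} transfers similarly via the comparability $\diam \hat B^{\word{w}{}} \asymp \diam B^{\word{w}{}}$.

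The principal obstacle is the topological step: the intersection of two simply connected planar domains need not be simply connected, so at each depth $|\word{w}{}| \geq N_0$ one must exhibit a simply connected open topological disk $\hat B^{\word{w}{}} \subset B^{\word{w}{}} \cap B_{can}^{\word{w}{}}$ that contains the compact Cantor piece $\Cantor^{\word{w}{}}$, and these disks must fit together consistently under nesting and under $F$. For $N_0$ large, the uniform contraction and distortion bounds for the scope maps from Proposition~\ref{scopeestimate} let us transport a standard small disk around $(\MT^{\word{w}{}})^{-1}(\Cantor^{\word{w}{}}) \subset B$ by $\MT^{\word{w}{}}$, and bounded distortion guarantees the resulting disk sits inside $B^{\word{w}{}}$ as well; the $F$-equivariance then follows since the scope maps and the Cantor set are $F$-equivariant, while the nesting can be enforced by an additional shrinking step if needed.
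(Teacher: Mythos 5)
Your core construction --- cut each piece of $\uline B$ down by the corresponding canonical box and take a simply connected core containing the Cantor piece --- is the same idea the paper uses; the paper simply defines $\hat B^{\word{w}{w}}=B^{\word{w}{w}}\cap B^{\word{w}{}}_{can}$ and records the two trivial monotonicities $\dist(\hat B^{\word{w}{}},\hat B^{\word{\tilde w}{}})\geq\dist(B^{\word{w}{}},B^{\word{\tilde w}{}})$ and $\diam(\hat B^{\word{w}{}})\leq\diam(B^{\word{w}{}})$ and stops. You go further in two ways, and one of them contains a real gap.

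What you do better: you notice that the raw intersection $B^{\word{w}{w}}\cap B^{\word{w}{}}_{can}$ of two simply connected planar domains need not be a simply connected domain, and so the paper's $\uline{\hat B}$ as literally written may not be a boxing. Replacing the intersection by a simply connected open core with $\Cantor^{\word{w}{w}}\subset\hat B^{\word{w}{w}}\subset B^{\word{w}{w}}\cap B^{\word{w}{}}_{can}$ is a legitimate fix, and your chain of estimates transferring \eqref{geometry:necessary} from $\uline B$ to $\uline{\hat B}$ is correct: the sandwich $\Cantor^{\word{w}{}}\subset\hat B^{\word{w}{}}\subset B^{\word{w}{}}$ together with $\diam\Cantor^{\word{w}{}}\asymp\diam B^{\word{w}{}}$ (itself a consequence of \eqref{geometry:necessary}--\eqref{geometry:unnecessary} for $\uline B$) does force $\diam\hat B^{\word{w}{}}\asymp\diam B^{\word{w}{}}$ and hence $\diam\hat B^{\word{w}{w}}\asymp\dist(\hat B^{\word{w}{w}},\hat B^{\word{w}{\tilde w}})$.

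The gap: the sentence ``Property \eqref{geometry:unnecessary} transfers similarly via the comparability $\diam\hat B^{\word{w}{}}\asymp\diam B^{\word{w}{}}$'' does not hold up. Writing the comparability as $\diam\hat B^{\word{w}{}}\geq c\,\diam B^{\word{w}{}}$, the constant your own chain produces is $c=C^{-1}\kappa$. Feeding that through the upper inequality of \eqref{geometry:unnecessary} gives
\[
\diam\hat B^{\word{w}{w}}\;\leq\;\diam B^{\word{w}{w}}\;<\;(1-\kappa)\diam B^{\word{w}{}}\;\leq\;\frac{C(1-\kappa)}{\kappa}\,\diam\hat B^{\word{w}{}},
\]
and $\frac{C(1-\kappa)}{\kappa}$ is typically much larger than $1$, so this does not produce a uniform contraction constant $1-\kappa'<1$ for $\uline{\hat B}$. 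The upper half of \eqref{geometry:unnecessary} is a strict shrinking condition, not a two--sided comparability, and does not follow by merely sandwiching diameters; it genuinely needs a separate argument (for instance, exploiting the freedom in your inductive choice of $\hat B^{\word{w}{w}}$ to keep it inside a definitely--shrunk neighbourhood of $\Cantor^{\word{w}{w}}$, or invoking a geometric inequality relating $\diam(\hat B^{\word{w}{w}}\cup\hat B^{\word{w}{\tilde w}})$ to $\diam\hat B^{\word{w}{w}}$ and $\dist(\hat B^{\word{w}{w}},\hat B^{\word{w}{\tilde w}})$ once \eqref{geometry:necessary} is in hand). The paper sidesteps this entirely: its two displayed inequalities only show that the ratio $\diam/\dist$ can only increase when passing from $\uline{\hat B}$ to $\uline B$, and since the unboundedness produced later in the paper is precisely a blow--up of that ratio (a violation of the right--hand inequality in \eqref{geometry:necessary}, cf.\ Proposition~\ref{ifh-overlap} and the remark following the definition of bounded geometry), that one monotonicity is all that is actually used. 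In other words, you tried to prove more than the paper does, and the extra part --- the transfer of \eqref{geometry:unnecessary} --- is where the argument breaks.

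One smaller remark on your last paragraph: asserting that bounded distortion of the scope maps places a uniform scope--image disk inside $B^{\word{w}{}}$ is not justified, since $B^{\word{w}{}}$ can, a priori, be an extremely thin neighbourhood of $\Cantor^{\word{w}{}}$; the radius of the auxiliary disk must be taken word--dependent, shrinking with $\word{w}{}$, which your sketch should acknowledge.
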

\begin{proof}
Given a boxing $\uline B$ of $\uline\Cantor$ define $\uline{\hat B}$ to be the collection $\{\hat B^{\word{w}{}}\}_{w\in W^*}$ where
\[\hat B^{\word{w}{w}}=B^{\word{w}{w}}\cap B_{can}^{\word{w}{}}, \quad w\in W, \word{w}{}\in W^*\] 
It is clear that 
\[\dist(B^{\word{w}{}},B^{\word{\tilde{w}}{}})\leq \dist(\hat{B}^{\word{w}{}},\hat{B}^{\word{\tilde w}{}})\]
and
\[\diam(B^{\word{w}{}})\geq \diam(\hat{B}^{\word{w}{}}).\]
\end{proof}

\subsection{The Construction}\label{construction}
Now let us introduce the construction and set-up some notation that shall be used throughout the remainder of the paper.
Firstly, for any infinitely renormalisable H\'enon-like map, we
will change coordinates for each renormalisation so that the $n$-th tip, $\tau_n$, lies at the origin.
As this coordinate change is by translations only, this will not affect the geometry of the
Cantor set. The new scope maps will have the form
\[\afa\MT_{m,n}(z)=D_{m,n}\circ (\id+R_{m,n})(z).\]
Secondly, the following quantities will prove to be useful. Given $z=(x,y),\tilde z=(\tilde{x},\tilde{y})\in \Dom (F_{n+1})$ let
\begin{equation*}
\Upsilon_*(z,\tilde{z})=\frac{v_*(\tilde{x})-v_*(x)}{\tilde{y}-y},
\end{equation*}
where $v_*$ is the universal function given by Proposition~\ref{scopeestimate}.
Given $F\in \I_{\Omega,\upsilon}(\bar\e_0)$ and points $z,\tilde z\in B_{n+1}$ let
\begin{equation*}
\Upsilon_m(z,\tilde{z})=\Upsilon_*(z,\tilde{z})-c_{m}\frac{\tilde{y}^2-y^2}{\tilde{y}-y}
\end{equation*}
where $c_m=c_m(F)$ are the constants given by Proposition~\ref{scopeestimate}.
\begin{rmk}\label{rmk:signconvention}
A technicality that was not present in~\cite{dCML} is the following: the quantity $t_{m,n}/s_{m,n}$ (where $t_{m,n}$ and $s_{m,n}$ are tilt and the squeeze of $\MT_{m,n}$ as given by Proposition~\ref{scopeestimate})
is important in controlling horizontal overlap of pieces of a boxing. The sign of this will determine which boxes we take to ensure their images horizontally
overlap. Observe that the combinatorial type $\upsilon$ determines whether the sign of $t_{m,n}/s_{m,n}$ alternates or remains constant. This is due to the sign of $t_{m,n}$ being always negative, but the sign of $s_{i}$ will asymptotically depend upon the sign of the derivative of the presentation function at its fixed point so, as $s_{m,n}$ is the product of $s_i$, the sign of $s_{m,n}$ will either be $(1)^{n-m}$ or $(-1)^{n-m}$.
Consequently we will restrict ourselves to considering sufficiently large $m,n\in 2\NN$ or $2\NN+1$ to ensure $t_{m,n}/s_{m,n}$ is negative. Our method would also work for the other case, but this would require choosing more words and points below and doing a case analysis, which adds to the complications. 
\end{rmk}

\begin{defn}
Given words $\word{w}{},\word{\tilde{w}{}}$ the points $z_*^0,z_*^1\in\Cantor_*^{\word{w}{}}$, and $\tilde{z}_*^0\in\Cantor_*^{\word{\tilde{w}}{}}$ are \emph{well placed} if
\begin{enumerate}
\item\label{property:lineup1}
$x_*^0<x_*^1<\tilde{x}_*^0,\quad y_*^0<y_*^1<\tilde{y}_*^0$; 
\item\label{property:wellplaced1}
$\Upsilon_*(z_*^0,\tilde{z}_*^0)<\Upsilon_*(z_*^0,z_*^1)$.
\end{enumerate}
A pair of words $\word{w}{},\word{\tilde{w}{}}$ are called \emph{well chosen} if 
\begin{enumerate}
\item 
there exist well placed points $z_*^0,z_*^1\in\Cantor_*^{\word{w}{}}$, and $\tilde{z}_*^0\in\Cantor_*^{\word{\tilde{w}}{}}$;
\item 
$\word{w}{}$ and $\word{\tilde w}{}$ differ only on the last letter, i.e. $\word{w}{}=w_0\ldots w_{n-1}w_n$ and $\word{\tilde{w}}{}=w_0\ldots w_{n-1}\tilde{w}_n$ for
some $w_0,\ldots,w_n,\tilde{w}_n\in W$ and some integer $n>0$.
\end{enumerate}

\end{defn}
\begin{rmk}
Observe Property~(i) will occur for certain words as $\Cantor_*^{\word{w}{}}$ and $\Cantor_*^{\word{\tilde{w}}{}}$ are horizontally and
vertically separated if $\word{w}{}$ and $\word{\tilde{w}}{}$ have the same length. If the $t_{m,n}/s_{m,n}$ were positive we would change the ordering above.
\end{rmk}
\begin{lem}\label{lem:wellchosen}
Well chosen pairs of words exist.
\end{lem}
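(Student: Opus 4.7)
The plan is to reduce the lemma to a local concavity argument on the essentially one-dimensional Cantor set $\Cantor_*$ of the degenerate renormalisation fixed point $F_*$. A point $z=(x,y)\in\Cantor_*$ has its $y$-coordinate determined as the $x$-coordinate of its $F_*$-preimage, which is in turn singled out by the address of $z$; hence on any cylinder the $y$-coordinate is a smooth function of the $x$-coordinate, namely the appropriate branch of $f_*^{-1}$. Consequently, on any fixed cylinder,
\[
\Upsilon_*(z_0,z_1)=\frac{v_*(x_1)-v_*(x_0)}{y_1-y_0}
\]
can be rewritten as a divided difference of the real analytic function $g=v_*\circ f_*$ in the $y$-variable.

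Since $v_*$ is non-affine (it arises as the universal limit of the horizontal components of the scope maps and reflects the non-trivial 1-D renormalisation theory) and $f_*$ is a non-affine unimodal map, $g$ is real analytic and not affine. Hence $g''$ is real analytic and not identically zero, so there exist open intervals $U$ on the increasing branch of $f_*$ on which $g$ is either strictly concave or strictly convex; I would pick whichever sign matches the convention of Remark~\ref{rmk:signconvention} (say $g$ strictly concave).

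Next I would take the depth $n$ large and of the parity dictated by Remark~\ref{rmk:signconvention}. For $n$ sufficiently large I can choose a prefix $w_0\ldots w_{n-1}$ together with distinct last letters $w_n,\tilde w_n\in W$ such that both sibling cylinders $\Cantor_*^{\word{w}{}}$ and $\Cantor_*^{\word{\tilde w}{}}$ lie inside $U$ and such that $\Cantor_*^{\word{\tilde w}{}}$ sits strictly northeast of $\Cantor_*^{\word{w}{}}$. Such a configuration exists because cylinders of the same depth are pairwise horizontally and vertically separated and become arbitrarily small, leaving combinatorial freedom in the choice of last letter. Inside $\Cantor_*^{\word{w}{}}$ I would pick two further nested sub-cylinders and place $z_*^0$ in the southwest one and $z_*^1$ in the northeast one; $\tilde z_*^0$ is taken to be any point of $\Cantor_*^{\word{\tilde w}{}}$. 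Property~(i) of well-placement follows at once from the (sub-)cylinder ordering, and Property~(ii) follows from strict concavity of $g$ on $U$ applied to the triple $y_*^0<y_*^1<\tilde y_*^0$:
\[
\frac{g(\tilde y_*^0)-g(y_*^0)}{\tilde y_*^0-y_*^0}<\frac{g(y_*^1)-g(y_*^0)}{y_*^1-y_*^0}.
\]

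The main obstacle is the sign-bookkeeping required by Remark~\ref{rmk:signconvention}: the parity of $n$, the orientation of the sibling cylinders, the branch of $f_*^{-1}$ selected by each cylinder address, and the concavity versus convexity of $g$ on $U$ must all be aligned so that the resulting slope inequality has the stated sense. Since the lemma asserts only the existence of one well-chosen pair, it suffices to verify this alignment in a single parity class and for a single pair of sibling cylinders; the complementary case is handled by the symmetric argument using strict convexity.
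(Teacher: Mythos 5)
Your core reduction is exactly what the paper does: points of $\Cantor_*$ lie on the graph $x=\hat f_*(y)$ (with $\hat f_*$ the translated fixed point), so $\Upsilon_*$ becomes a divided difference of the analytic function $g=v_*\circ\hat f_*$ in the $y$-variable, and then Property~(i) together with strict concavity of $g$ on a single interval yields Property~(ii) by the Mean Value Theorem. Wrapping the chosen points into the smallest pair of sibling cylinders gives the well-chosen pair. That matches the paper's argument.

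The place where you diverge is the localization, and this is where your write-up has a gap. You argue that $g$ is non-affine because $v_*$ and $f_*$ are non-affine, and then choose any interval $U$ where $g''$ has constant sign. Neither step is complete as stated. First, a composition of two non-affine analytic maps can certainly be affine, so ``$v_*,f_*$ non-affine $\Rightarrow g$ non-affine'' is not a proof; one must exhibit an actual obstruction. Second, even granting $g'' \not\equiv 0$, an arbitrary interval $U\subset\{g''\neq 0\}$ on the increasing branch need not contain any point of the one-dimensional attractor $\pi_y(\Cantor_*)$, which is a Cantor set; you silently use that cylinders can be found inside $U$. Both issues can be patched (the attractor is uncountable while $\{g''=0\}$ is discrete for an analytic non-affine $g$, so some attractor point has a neighbourhood where $g''$ keeps its sign), but the cleaner route is the one the paper takes: localize at the critical point $c_*$ of $\hat f_*$. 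There $g'(c_*)=v_*'(\hat f_*(c_*))\,\hat f_*'(c_*)=0$ while $g$ is non-constant, which forces a definite local sign of $g''$ (using that $\hat f_*$ has a nondegenerate quadratic critical point), and $c_*$ is automatically in the closure of the attractor for an infinitely renormalisable unimodal map, so sibling cylinders of large depth land in the neighbourhood for free. I'd suggest replacing your generic $U$ by this $c_*$-based choice: it simultaneously discharges the non-affineness claim and the intersection-with-the-attractor claim, and it is how the paper proceeds.

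Your final remarks about parity and sign-bookkeeping are consistent with Remark~\ref{rmk:signconvention} and with the paper's instruction to put $y_*^0,y_*^1,\tilde y_*$ on a single side of $c_*$, so that part is fine; it is not, however, the ``main obstacle.'' The obstacle you should address explicitly is the one above.
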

\begin{proof}
First we wish to find well-placed points, then it will become clear from our argument that we can assume they boxes with well chosen words. Recall that we have changed coordinates so that the tip $\tau_*$ lies at the origin. Let $\afa{f_*}$ denote the translation $f_*$ that agrees with this coordinate change. Observe that points in $\Cantor_*$ have the form $z=(\afa{f_*}(y),y)$ where $y$ lies in the one-dimensional Cantor attractor for $\afa{f_*}$ in the interval. Therefore given points $z_*^0,z_*^1, \tilde{z}_*\in\Cantor_*$ we have
\begin{equation}
\Upsilon_*(z_*^0,z_*^1)=\frac{v_*\circ\afa{f_*}(y_*^1)-v_*\circ\afa{f_*}(y_*^0)}{y^1_*-y^0_*}, \quad 
\Upsilon_*(z_*^0,\tilde{z}_*)=\frac{v_*\circ\afa{f_*}(\tilde{y}_*)-v_*\circ\afa{f_*}(y_*^0)}{\tilde{y}_*-y^0_*}.
\end{equation}
Since $v_*$ and $\afa{f_*}$ are analytic so is the function $v_*\circ\afa{f_*}$. Since the derivative of $v_*\circ\afa{f_*}$ is zero at the critical point $c_*$ analyticity implies there exists a neighbourhood $V$ around $c_*$ on which $v_*\circ\afa{f_*}$ is concave or convex. Therefore if $z_*^0,z_*^1, \tilde{z}_*\in\Cantor_*$ are any points whose $y$-projections lie in $V$ then Property~\ref{property:lineup1} implies Property~\ref{property:wellplaced1}, by the Mean Value Theorem for example. But choosing $y_*^0,y_*^1$ and $\tilde{y}_*$ to lie all either to the left of $c_*$ or to the right will give us Property~\ref{property:lineup1}.

Finally choosing the largest disjoint cylinder sets $\Cantor_*^\word{w}{}, \Cantor_*^{\word{\tilde{w}}{}}$ of $\Cantor_*$, of the same depth, such that $z_*^0,z_*^1\in\Cantor_*^\word{w}{}$ and $\tilde{z}_*\in\Cantor_*^\word{\tilde{w}}{}$ gives us the desired well-chosen words.
\end{proof}
We can now make the following assumptions. 
There exist words $\word{w}{},\word{\tilde{w}}{}$, of the same length, and points $z_*^0,z_*^1\in\Cantor_*^{\word{w}{}},
\tilde{z}_*^0,\tilde{z}_*^1\in\Cantor_*^{\word{\tilde{w}}{}}$, which we now fix, satisfying
\begin{enumerate}
\item\label{property:lineup2}
$x_*^0<x_*^1<\tilde{x}_*^0<\tilde{x}_*^1,\quad y_*^0<y_*^1<\tilde{y}_*^0<\tilde{y}_*^1$;
\item\label{property:wellplaced2}
the points $z_*^0, z_*^1,\tilde{z}_*^0$ are well placed.
\end{enumerate}
Given these points let us now define some quantities which shall prove to be useful. Let
\[\kappa_0=|\Upsilon_*(z_*^0,z_*^1)-\Upsilon_*(\tilde{z}_*^0,\tilde{z}_*^1)|, \quad \kappa_1=\frac{|y_*^1-y_*^0|}{|\tilde{y}_*^0-y_*^0|},\]
and
\[\kappa_2=|\tilde{y}_*^0-y_*^0|, \quad \kappa_3=|y_*^1-y_*^0|, \quad \kappa_4=|\tilde{y}_*^1-\tilde{y}_*^0|.\]
These are all well-defined nonzero quantities by Lemma~\ref{lem:wellchosen}.
For any $F\in\I_{\Omega,\upsilon}(\bar\e_0)$ let the points 
\[z_{n}^0=(x_{n}^0,y_{n}^0),z_{n}^1=(x_{n}^1,y_{n}^1)\in\Cantor_{n}^{\word{w}{}}\]
and
\[\tilde{z}_{n}^0=(\tilde{x}_{n}^0,\tilde{y}_{n}^0),\tilde{z}_{n}^1=(\tilde{x}_{n}^1,\tilde{y}_{n}^1)\in\Cantor_{n}^{\word{\tilde{w}}{}}\]
have the same respective addresses in $\Cantor_{n}$ (see subsection~\ref{subsect:unimodal} to recall the definition) as those of $z_*^0,z_*^1,\tilde{z}_*^0,\tilde{z}_*^1$ in $\Cantor_*$.
Let
\begin{equation}
M=\left[\frac{\Upsilon_*(z_*^0,\tilde{z}_*^0)-\frac{\kappa_1}{2}\Upsilon_*(z_*^0,z_*^1)}{1-\frac{\kappa_1}{2}},\Upsilon_*(z_*^0,\tilde{z}_*^0)\right].
\end{equation}
This is a well defined interval because $z_*^0, z_*^1$ and $\tilde{z}_*^0$ are well placed which implies $\Upsilon_*(z_*^0,z_*^1)>\Upsilon_*(z_*^0,\tilde{z}_*^0)$ and hence
\begin{equation}
\Upsilon(z_*^0,\tilde{z}_*^0)-\frac{\kappa_1}{2}\Upsilon_*(z_*^0,z_*^1)<\Upsilon_*(z_*^0,\tilde{z}_*^0)(1-\frac{\kappa_1}{2})
\end{equation}
Dividing by $1-\frac{\kappa_1}{2}$ and recalling $0<\kappa_1/2<1$ gives us the claim.
Fix a $\delta>0$ such that
\begin{equation}\label{deltaprop2}
M_\delta=\left[\frac{\Upsilon_*(z_*^0,\tilde{z}_*^0)-\frac{\kappa_1}{2}\Upsilon_*(z_*^0,z_*^1)}{1-\frac{\kappa_1}{2}}+\frac{\delta}{3}\left(\frac{3-\frac{\kappa_1}{2}}{1-\frac{\kappa_1}{2}}\right),\Upsilon_*(z_*^0,\tilde{z}_*^0)-\delta\right].
\end{equation}
is a well defined interval.
Choose $N>0$ sufficiently large so that
\begin{equation}\label{deltaprop3}
4C\rho^N<\frac{\kappa_2}{2}\left(1-\frac{\kappa_1}{2}\right)\frac{\delta}{3}
\end{equation}
and
\begin{equation}\label{prop6}
4C\rho^{N}(1/\kappa_3+1/\kappa_4)<\kappa_0/8.
\end{equation}
Let $\A\subset\I_{\Omega,\upsilon}(\bar\e_0)$ denote the subspace of all infinitely renormalisable H\'enon-like maps $F$ such that, for all $n>m>0, n-m>N$:
\begin{a-enumerate}
\item\label{prop1} 
$x_{n+1}^0<x_{n+1}^1<\tilde{x}_{n+1}^0<\tilde{x}_{n+1}^1,\quad y_{n+1}^0<y_{n+1}^1<\tilde{y}_{n+1}^0<\tilde{y}_{n+1}^1$;
\item\label{prop2} 
$1>|y_{n+1}^1-y_{n+1}^0|/|\tilde{y}_{n+1}^0-y_{n+1}^0|>\kappa_1/2$;
\item\label{prop4}
$|\tilde{y}_{n+1}^0-y_{n+1}^0|>\kappa_2/2, \ |y_{n+1}^1-y_{n+1}^0|>\kappa_3/2, \ |\tilde{y}_{n+1}^1-\tilde{y}_{n+1}^0|>\kappa_4/2$;
\item\label{prop3}
$|\Upsilon_m(z_{n+1}^0,z_{n+1}^1)-\Upsilon_m(\tilde{z}_{n+1}^0,\tilde{z}_{n+1}^1)|>\kappa_0/2$;
\item\label{prop5}
$|(x+r_{m,n}(z))-(v_*(x)-c_my^2)|<C\rho^{n-m}$ for all $z\in B_{n+1}$;
\item\label{deltaprop1}
$|\Upsilon_m(z_{n+1}^0,z_{n+1}^1)-\Upsilon_*(z_*^0,z_*^1)|, |\Upsilon_m(z_{n+1}^0,\tilde{z}_{n+1}^0)-\Upsilon_*(z_*^0,\tilde{z}_*^0)|<\delta/3$;
\item\label{deltaprop2}
$t_{m,n}/s_{m,n}<0$ and moreover
\[\left|\frac{t_{m,n}}{s_{m,n}}+a\frac{b^{p^m}}{\sigma^{n-m}}\right|<\delta/3;\]
\end{a-enumerate}
where $a,C,c_m, t_{m,n},s_{m,n},\sigma, v_*,\rho$ are the quantities described by Proposition~\ref{scopeestimate}. 

\begin{prop}
Given a family $F_b\in\I_{\Omega,\upsilon}(\bar\e_0)$ parametrised by the average Jacobian, there exists an integer $N_0>0$ and $0<b_0<1$ such that $\RH^n F_b\in \A$ for all
$n>N_0, 0\leq b\leq b_0$.
\end{prop}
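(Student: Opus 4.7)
The plan is to verify the seven conditions (A-1)--(A-7) in turn, exploiting two ingredients. First, by Theorem~\ref{R-convergence} and the hyperbolicity of $F_*$, the renormalisations $F_{b,n} := \RH^n F_b$ converge to the fixed point $F_*$ exponentially fast, uniformly in $b\in[0,b_0]$, provided $b_0$ is chosen small enough that the family $\{F_b\}$ stays in a uniform neighbourhood of the codimension-one stable manifold. Second, by construction the well-chosen words and well-placed points make each of the starred analogues of (A-1)--(A-4), (A-6) hold with strict inequality, with the gaps quantified by the positive constants $\kappa_0,\ldots,\kappa_4$ and the chosen $\delta$; so all one needs is enough uniform continuity to approximate the starred quantities within a small fraction of these gaps.

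First I would pass this convergence through to the Cantor sets: $\Cantor_{n+1}(F_b)$ converges in the Hausdorff topology to $\Cantor_*$, and the coding by addresses in $\bar W$ is continuous, so the points $z_{n+1}^i, \tilde z_{n+1}^i$ (which by definition share addresses with $z_*^i, \tilde z_*^i$) converge to the latter uniformly in $b\in[0,b_0]$. The ordering condition (A-1) and the positive lower bounds (A-2), (A-3) then follow at once by choosing $N_0$ large enough that the approximation error is less than half of each relevant gap $\kappa_1/2, \kappa_2/2, \kappa_3/2, \kappa_4/2$.

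For (A-4) and (A-6) I would use the identity $\Upsilon_m(z,\tilde z) = \Upsilon_*(z,\tilde z) - c_m(\tilde y + y)$ together with the estimate $c_m = \bigo(\bar\e^{p^m}) = \bigo(b_0^{p^m})$, combined with the already-established convergence of the points, to conclude that $\Upsilon_m \to \Upsilon_*$ uniformly on the relevant tuples at a rate independent of $b$. Condition (A-5) is a direct quotation of Proposition~\ref{scopeestimate}(iv). For (A-7), after restricting to $n, m$ of the appropriate common parity (as per Remark~\ref{rmk:signconvention}) so as to pin down the sign of $t_{m,n}/s_{m,n}$, the quantitative bound follows by expanding the ratio using parts (ii) and (iii) of Proposition~\ref{scopeestimate}; the resulting multiplicative error is of order $\rho^m$, which is absorbed into $\delta/3$ after taking $N_0$ large and $b_0$ small.

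The main obstacle is ensuring that a \emph{single} pair $(N_0, b_0)$ works simultaneously for every $n, m$ with $n-m>N$ and every $b\in[0,b_0]$. This reduces to a careful bookkeeping exercise: the constants $a, C, \rho, \sigma$ appearing in Proposition~\ref{scopeestimate} and the contraction rate in Theorem~\ref{thm:universality} are all $b$-independent once $b_0$ is fixed sufficiently small, so taking $N_0$ to be the maximum of the finitely many thresholds produced by the steps above yields the required uniform statement.
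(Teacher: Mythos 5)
Your proposal is correct and follows the same essential strategy as the paper: exponential convergence of the renormalisations $\RH^n F_b$ to the fixed point $F_*\in\A$, together with the fact that the defining conditions of $\A$ are open so that small perturbations stay inside. The paper's own proof is only three lines -- choose $N_0$ so that $\RH^{N_0}F_0\in\A$, use openness of $\A$ to extend this to $\RH^{N_0}F_b\in\A$ for small $b$, then observe that $\A$ is $\RH$-invariant so all $n>N_0$ follow -- and your condition-by-condition verification is essentially an unpacking of the openness claim (plus directly checking every level $n$ in place of the invariance shortcut), so the underlying argument is the same.
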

\begin{proof}
This follows as $\RH^n(F_b)$ converges exponentially to $F_*$ which lies in $\A$, so we may choose the $N_0>0$ so that $\RH^n(F_0)\in \A$ for all $n>N_0$. Then it is clear there exists a $b_0>0$ such that $\RH^{N_0}(F_b)\in \A$ for all $0\leq b\leq b_0$ since $\A$ is open. It is also clear $\A$ is invariant under $\RH$ so the Proposition follows.\end{proof}

We now describe the construction. This was used in~\cite{dCML} and~\cite{Haz1} to prove several negative results, such as non-existence of continuous invariant line fields
(see these two references for further details). Let $F\in\A$ and let us fix $n,m\in 2\NN$ or $2\NN+1$ as per remark~\ref{rmk:signconvention} such that $n>m>0$ and $n-m>N$. Consider the maps $\MT_{0,m-1}, F_m, \MT_{m,n}$.
In reverse order, these map from height $n+1$ to height $m$, from height $m$ to itself and from height $m$ to height $0$ respectively (see figure~\ref{fig:perturb}). 

We will adopt the following notation convention: if we have a quantity $Q$ in the
domain of $\MT_{m,n}$ we will denote its images under $\MT_{m,n}, F_m$ and $\MT_{0,m-1}$ by $\dot Q, \ddot Q$ and $\dddot Q$ respectively.

\begin{figure}
\centering
\psfrag{amt}{\small $\MT_{0,m-1}$}
\psfrag{bf0}{\small $F_m$}
\psfrag{bmt}{\small $\MT_{m,n}$}
\psfrag{cf0}{\small $F_{n+1}$}
\psfrag{a}{$\Dom(F_0)$}
\psfrag{b}{$\Dom(F_m)$}
\psfrag{c}{$\Dom(F_{n+1})$}
\psfrag{ab0}{}
\psfrag{bb0}{}
\psfrag{bb1}{}
\psfrag{cb0}{}
\psfrag{cb1}{}

\includegraphics[width=1.0\textwidth]{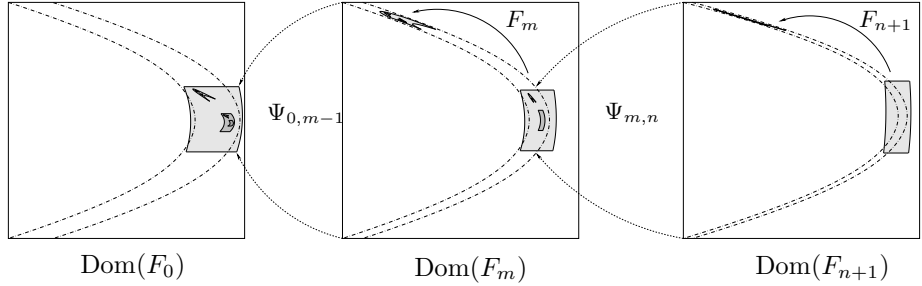}
\caption{perturbation of boxes near the tip}\label{fig:perturb}
\end{figure}






\section{Horizontal Overlapping Distorts Geometry}\label{horiz-overlap}
Recall that in the previous section we fixed well chosen words $\word{w}{}, \word{\tilde{w}}{}\in W^*$ with points $z_*^0,z_*^1\in\Cantor_*^\word{w}{}$ and
$\tilde{z}_*^0,\tilde{z}_*^1\in\Cantor_*^\word{\tilde{w}}{}$ so that $z_*^0,z_*^1$ and $\tilde{z}_*^0$ are well-placed. We make the following definition.
\begin{defn}
Given a boxing $\uline{B}$ we will say it satisfies property $\Hor_{\word{w}{},\word{\tilde{w}}{}}(m,n)$ if
the pieces $\oo{\word{w}{}}{B}_{n+1},\oo{\word{\tilde{w}}{}}{B}_{n+1}\in \uline{B}_{n+1}$ have images $\oo{0^{n-m}\word{w}{}}{B}_m, \oo{0^{n-m}\word{\tilde{w}}{}}{B}_m$, under $\MT_{m,n}$, which horizontally overlap.
\end{defn}
Throughout the rest of the section we will assume the boxing $\uline{B}$ is fixed.
\begin{lem}[Key Lemma]\label{key}
Given a constant $K>0$, there is a constant $C>0$ such that the following holds: given $F\in\I_{\Omega,\upsilon}(\bar\e_0)$, if there are points $z, \tilde z\in \Dom(F_{n+1})$ satisfying
\begin{itemize}
\item $|\pi_y(z)-\pi_y(\tilde{z})|>K$;
\item $|\pi_x(\dot{z})-\pi_x(\dot{\tilde{z}})|=0$;
\end{itemize}
then
\begin{equation}
|\Upsilon_*(z,\tilde{z})|-C\max(\rho^m,\rho^{n-m})<\frac{ab^{p^m}}{\sigma^{n-m}}<|\Upsilon_*(z,\tilde{z})|+C\max(\rho^m,\rho^{n-m})
\end{equation}
\end{lem}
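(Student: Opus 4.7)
The plan is to manipulate the explicit form of $\afa\MT_{m,n}$ from Proposition~\ref{scopeestimate} and then feed in the asymptotic estimates for its coefficients. Writing $z=(x,y)$, $\tilde z=(\tilde x,\tilde y)$, and $u_{m,n}(w):=\pi_x(w)+r_{m,n}(w)$, the first step is to compute, in the translated coordinates (where all tips sit at the origin),
\[\pi_x(\dot z)-\pi_x(\dot{\tilde z}) = \sigma_{m,n}\bigl[s_{m,n}(u_{m,n}(z)-u_{m,n}(\tilde z))+t_{m,n}(y-\tilde y)\bigr].\]
Since $\sigma_{m,n}$ and $s_{m,n}$ are nonzero, the hypothesis $\pi_x(\dot z)=\pi_x(\dot{\tilde z})$ rearranges to
\[\frac{u_{m,n}(z)-u_{m,n}(\tilde z)}{y-\tilde y} = -\frac{t_{m,n}}{s_{m,n}}.\]

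The second step is to approximate the left-hand side via Proposition~\ref{scopeestimate}(iv), which says $u_{m,n}(w) = v_*(\pi_x(w))+c_m\pi_y(w)^2+O(\rho^{n-m})$ uniformly on the domain. Subtracting this at $z$ and $\tilde z$, dividing by $y-\tilde y$, noting that $(v_*(x)-v_*(\tilde x))/(y-\tilde y)=\Upsilon_*(z,\tilde z)$ and $(y^2-\tilde y^2)/(y-\tilde y)=y+\tilde y$, and using $|y-\tilde y|>K$ produces
\[\frac{u_{m,n}(z)-u_{m,n}(\tilde z)}{y-\tilde y} = \Upsilon_*(z,\tilde z) + c_m(y+\tilde y) + O(\rho^{n-m}/K).\]
Because $c_m=O(\bar\e^{p^m})$ and $|y+\tilde y|$ is uniformly bounded on the domain, the middle term is absorbed into an $O(\rho^m)$ error (for $m$ sufficiently large; small $m$ is handled by taking $C$ large). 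Comparing with the identity above yields
\[\Upsilon_*(z,\tilde z) = -\frac{t_{m,n}}{s_{m,n}} + O\bigl(\max(\rho^m,\rho^{n-m})\bigr),\]
with implicit constant depending only on $K$.

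The final step is to compare $-t_{m,n}/s_{m,n}$ to $ab^{p^m}/\sigma^{n-m}$. Under the parity convention of Remark~\ref{rmk:signconvention}, $t_{m,n}/s_{m,n}<0$, so parts (ii) and (iii) of Proposition~\ref{scopeestimate} combine to give
\[-\frac{t_{m,n}}{s_{m,n}} = \frac{|t_{m,n}|}{|s_{m,n}|} = \frac{ab^{p^m}}{\sigma^{n-m}}\bigl(1+O(\rho^m)\bigr).\]
The main obstacle at this stage is that this estimate is relative rather than absolute: if $ab^{p^m}/\sigma^{n-m}$ were unbounded, the multiplicative $O(\rho^m)$ factor would not yield an absolute error of size $\rho^m$. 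This is precisely where the hypothesis $|y-\tilde y|>K$ is essential. It forces $|\Upsilon_*(z,\tilde z)|\leq 2\|v_*\|_\infty/K$, and then the previous display shows that $ab^{p^m}/\sigma^{n-m}$ is bounded in terms of $K$; the relative error therefore becomes an absolute one, with the constant $C$ now depending on $K$. The triangle inequality
\[\bigl||\Upsilon_*(z,\tilde z)|-ab^{p^m}/\sigma^{n-m}\bigr|\leq \bigl|\Upsilon_*(z,\tilde z)+t_{m,n}/s_{m,n}\bigr|+\bigl|{-t_{m,n}/s_{m,n}}-ab^{p^m}/\sigma^{n-m}\bigr|\]
then combines the two error bounds and gives the lemma.
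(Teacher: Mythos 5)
Your proof is correct and follows essentially the same path as the paper's: solve for $-t_{m,n}/s_{m,n}$ from the vertical-alignment constraint via Proposition~\ref{scopeestimate}, estimate the resulting difference quotient using part~(iv), and compare $|t_{m,n}/s_{m,n}|$ to $ab^{p^m}/\sigma^{n-m}$ via parts~(ii) and~(iii). Your remark about needing $|y-\tilde y|>K$ to force $|\Upsilon_*(z,\tilde z)|$, and hence $ab^{p^m}/\sigma^{n-m}$, to be bounded — so that the multiplicative error from~(ii)/(iii) becomes an absolute one — is a correct elaboration of a step the paper compresses into ``together with our first assumption.''
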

\begin{proof}
Equality~(\ref{eq:scope2}) from Proposition~\ref{scopeestimate} tells us if $\dot{z}, \dot{\tilde{z}}$ lie on the same vertical line then
\begin{equation}
0=s_{m,n}(x+r_{m,n}(x,y)-\tilde{x}-r_{m,n}(\tilde{x},\tilde{y}))+t_{m,n}(y-\tilde{y}).
\end{equation}
Dividing by  $s_{m,n}(y-\tilde{y})$, which is nonzero, gives us
\begin{equation}
-\frac{t_{m,n}}{s_{m,n}}=\frac{[x+r_{m,n}(z)]-[\tilde{x}+r_{m,n}(\tilde{z})}{y-\tilde{y}}.
\end{equation}
By inequality~(\ref{ineq:scope4}) in Proposition~\ref{scopeestimate} implies
\begin{equation}\label{ineq:key1}
|\Upsilon_m(z,\tilde{z})|-\frac{C\rho^{n-m}}{|\tilde{y}-y|}<\left|\frac{t_{m,n}}{s_{m,n}}\right|<|\Upsilon_m(z,\tilde{z})|+\frac{C\rho^{n-m}}{|\tilde{y}-y|}.
\end{equation}
Again by inequality~(\ref{ineq:scope4}) in Proposition~\ref{scopeestimate} and the definition of $\Upsilon_m$ we know
\begin{equation}\label{ineq:key2}
|\Upsilon_*(z,\tilde{z})|-C\bar\e_0^{p^m}<|\Upsilon_m(z,\tilde{z})|<|\Upsilon_*(z,\tilde{z})|+C\bar\e_0^{p^m}.
\end{equation}
By inequalities~(\ref{ineq:scope2}) and~(\ref{ineq:scope3}) in Proposition~\ref{scopeestimate} we know there is a constant $C'>0$ such that
\begin{equation}\label{ineq:key3}
\left|\frac{t_{m,n}}{s_{m,n}}\right|(1-C'\rho^m)<\frac{ab^{p^m}}{\sigma^{n-m}}<\left|\frac{t_{m,n}}{s_{m,n}}\right|(1+C'\rho^m).
\end{equation}
Combining inequalities~(\ref{ineq:key1}), (\ref{ineq:key2}) and~(\ref{ineq:key3}), together with our first assumption and the observation $\bar\e_0^{p^m}=\bigo(\rho^m)$, gives us the result.
\end{proof}
\begin{cor}\label{cor:comparable}
There exists a constant $C>0$ such that the following holds:
let $F\in\I_{\Omega,\upsilon}(\bar\e)$ and let $z_{n+1}^0, \tilde{z}_{n+1}^0\in\Cantor_n$ have the same respective addresses as $z_*^0,\tilde{z}_*^0\in\Cantor_*$. 
If $|\pi_x(\dot{z}_{n+1}^0)-\pi_x(\dot{\tilde{z}}_{n+1}^0)|=0$ then
\begin{equation}
C^{-1}\sigma^{n-m}<b^{p^m}<C\sigma^{n-m}
\end{equation}  
\end{cor}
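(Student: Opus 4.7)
My plan is to deduce this corollary as a direct application of the Key Lemma (Lemma~\ref{key}) together with the structural assumptions (A-1)--(A-7) defining the subspace $\A$. The hypothesis of the corollary matches the second hypothesis of the Key Lemma, so what is left is (a) to verify the vertical separation hypothesis with an explicit constant $K$, and (b) to replace the term $|\Upsilon_*(z_{n+1}^0,\tilde z_{n+1}^0)|$ appearing in the Key Lemma by a pair of positive constants independent of $m,n$.

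For (a), I would use assumption (A-3), which yields $|\pi_y(z_{n+1}^0)-\pi_y(\tilde z_{n+1}^0)|=|\tilde y_{n+1}^0-y_{n+1}^0|>\kappa_2/2$, so I can set $K=\kappa_2/2$. This unlocks the Key Lemma, giving
\begin{equation*}
|\Upsilon_*(z_{n+1}^0,\tilde z_{n+1}^0)|-C\max(\rho^m,\rho^{n-m})<\frac{ab^{p^m}}{\sigma^{n-m}}<|\Upsilon_*(z_{n+1}^0,\tilde z_{n+1}^0)|+C\max(\rho^m,\rho^{n-m})
\end{equation*}
for a constant $C>0$ coming from that lemma.

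For (b), I would first argue that $\Upsilon_*(z_*^0,\tilde z_*^0)$ itself is a fixed \emph{nonzero} real number: from the proof of Lemma~\ref{lem:wellchosen}, the $y$-coordinates of $z_*^0$ and $\tilde z_*^0$ lie on one side of the critical point $c_*$, where $v_*\circ\afa{f_*}$ is strictly monotone, so its difference quotient is bounded away from zero. Let $\alpha:=|\Upsilon_*(z_*^0,\tilde z_*^0)|>0$. Then assumption (A-6) gives $|\Upsilon_m(z_{n+1}^0,\tilde z_{n+1}^0)-\Upsilon_*(z_*^0,\tilde z_*^0)|<\delta/3$, and the defining relation
\[
\Upsilon_m(z,\tilde z)=\Upsilon_*(z,\tilde z)-c_m(\tilde y+y),
\]
together with $|c_m|=O(\bar\e^{p^m})$ from Proposition~\ref{scopeestimate}, yields
\[
\bigl|\Upsilon_*(z_{n+1}^0,\tilde z_{n+1}^0)-\Upsilon_*(z_*^0,\tilde z_*^0)\bigr|<\delta/3+O(\bar\e^{p^m}).
\]
Choosing $\delta$ small relative to $\alpha$ once at the start (which is allowed in the set-up of Section~\ref{construction}), and using $N$ large enough, we get two-sided positive bounds $\alpha/2<|\Upsilon_*(z_{n+1}^0,\tilde z_{n+1}^0)|<2\alpha$ (say) for all $m,n$ with $n-m>N$.

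Combining (a) and (b), the term $C\max(\rho^m,\rho^{n-m})$ is dominated by the bounds on $|\Upsilon_*(z_{n+1}^0,\tilde z_{n+1}^0)|$ once $m$ and $n-m$ are large, and we obtain $(\alpha/4)<ab^{p^m}/\sigma^{n-m}<4\alpha$, which is the desired comparability after absorbing $a$ and $\alpha$ into a single constant $C$. The only mildly delicate step is ensuring nonvanishing of $\Upsilon_*(z_*^0,\tilde z_*^0)$, but this is already built into the well-placement construction.
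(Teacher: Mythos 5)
Your proof is correct and follows essentially the same (very terse) argument that the paper gives: the paper's entire justification is the one-line observation that $z_{n+1}^0, \tilde z_{n+1}^0$ can be taken arbitrarily close to $z_*^0, \tilde z_*^0$, so the $K$ in Lemma~\ref{key} is uniformly bounded below by (roughly) the fixed vertical distance between $z_*^0$ and $\tilde z_*^0$, and the conclusion drops out. What you have done is fill in the two details the paper leaves implicit: invoking (A-3) to pin down $K = \kappa_2/2$, and invoking (A-6) together with the relation $\Upsilon_m = \Upsilon_* - c_m(\tilde y + y)$ and $c_m = O(\bar\e^{p^m})$ to bound $|\Upsilon_*(z_{n+1}^0,\tilde z_{n+1}^0)|$ near the fixed nonzero value $\alpha = |\Upsilon_*(z_*^0,\tilde z_*^0)|$, which can then be absorbed into $C$.
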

\begin{proof}
This follows as $z_{n+1}^0, \tilde{z}_{n+1}^0$ can be taken to be arbitrarily close to $z_*^0, \tilde{z}_*^0$ and so the constant $K>0$ in Lemma~\ref{key} will eventually only
depend upon the vertical distance between these points, which is fixed.
\end{proof}

\begin{prop}\label{h-overlap-close}
For any words $\word{w}{},\word{\tilde{w}}{}\in W^*$ there exists a $C_0>0$ such that the following holds:
for any $F\in\I_{\Omega,\upsilon}(\bar\e_0)$ and any boxing $\uline B$ of $F$, if points $z\in B_{n+1}^{\word{w}{}}$ and $\tilde{z}\in B_{n+1}^{\word{\tilde w}{}}$ 
satisfy $|\pi_x(\dot{z})-\pi_x(\dot{\tilde{z}})|=0$ then
\[\dist(\dddot{z},\dddot{\tilde{z}})<C_0\sigma^{2m}b^{p^m}\sigma^{n-m}.\]
\end{prop}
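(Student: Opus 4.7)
The plan is to propagate the separation between $z$ and $\tilde z$ successively through the three maps $\MT_{m,n}$, $F_m$, and $\MT_{0,m-1}$, reading off the three factors $\sigma^{n-m}$, $b^{p^m}$, and $\sigma^{2m}$ as they appear from, respectively, the scaling in the normal form~(\ref{eq:scope2}) for $\MT_{m,n}$, the vertical contraction estimate in Proposition~\ref{F_n-estimate}, and the product of scaling and squeeze $\sigma_{0,m-1}s_{0,m-1}$ in~(\ref{eq:scope2}) applied to $\MT_{0,m-1}$. Throughout, I use the tip-at-origin convention of Section~\ref{construction} so that the scope maps take the pure linear-plus-remainder form $\afa\MT$.

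For the first step, formula~(\ref{eq:scope2}) gives $\pi_y(\dot z)-\pi_y(\dot{\tilde z})=\sigma_{m,n}(y-\tilde y)$; since $|y-\tilde y|$ is bounded by the diameter of $B$ and $|\sigma_{m,n}|\le C\sigma^{n-m}$ by~(\ref{ineq:scope1}), we obtain $|\pi_y(\dot z)-\pi_y(\dot{\tilde z})|\le C\sigma^{n-m}$. For the second step, Lemma~\ref{MVT} applied to $F_m$ together with the hypothesis $\pi_x(\dot z)=\pi_x(\dot{\tilde z})$ yields $\pi_y(\ddot z)-\pi_y(\ddot{\tilde z})=0$ and
\[
\pi_x(\ddot z)-\pi_x(\ddot{\tilde z})=\del_y\phi_m(\eta)\,(\pi_y(\dot z)-\pi_y(\dot{\tilde z})),
\]
which by the bound $|\del_y\phi_m|\le Cb^{p^m}$ from Proposition~\ref{F_n-estimate} is at most $Cb^{p^m}\sigma^{n-m}$. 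So $\ddot z$ and $\ddot{\tilde z}$ share a $y$-coordinate and are $x$-separated by $O(b^{p^m}\sigma^{n-m})$. For the third step, feeding two points of equal $y$-coordinate into~(\ref{eq:scope2}) for $\MT_{0,m-1}$ kills the $t_{0,m-1}y$ and $y$-row terms, leaving
\[
\dddot z-\dddot{\tilde z}=\sigma_{0,m-1}\,s_{0,m-1}\bigl[(\pi_x(\ddot z)-\pi_x(\ddot{\tilde z}))+(r_{0,m-1}(\ddot z)-r_{0,m-1}(\ddot{\tilde z}))\bigr]\ibyii{1}{0}.
\]
The remainder difference is bounded by $C|\pi_x(\ddot z)-\pi_x(\ddot{\tilde z})|$ because the two points lie on a common horizontal line and $\del_x r_{0,m-1}$ is uniformly bounded, and $|\sigma_{0,m-1}\,s_{0,m-1}|\le C\sigma^{2m}$ by~(\ref{ineq:scope1})--(\ref{ineq:scope2}). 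Multiplying the three factors gives the claimed bound.

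The one non-mechanical ingredient is the uniform Lipschitz bound on $r_{0,m-1}$ in the $x$-variable, which is not literally part of Proposition~\ref{scopeestimate}. However, (\ref{ineq:scope4}) supplies a $C^{0}$-bound $|x+r_{0,m-1}(x,y)-v_*(x)-c_0y^{2}|<C\rho^{m-1}$ on a complex neighbourhood of $B_{m}$, and a Cauchy estimate on the holomorphic extension converts this into a uniform bound on $\del_x r_{0,m-1}$. The only remaining subtlety is the bookkeeping point that Proposition~\ref{scopeestimate} is stated for $0<m<n$, whereas we apply it to $\MT_{0,m-1}$ with bottom index $0$; this costs at most a multiplicative constant absorbed in $C_0$.
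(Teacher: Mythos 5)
Your proof is correct and follows essentially the same three-step route as the paper: propagate the $x$-collision through $\MT_{m,n}$ (picking up $\sigma_{m,n}$ on the $y$-gap), through $F_m$ via the Mean Value Theorem (picking up $\del_y\phi_m$ and turning it into an $x$-gap with equal $y$'s), and then through $\MT_{0,m-1}$ (picking up $\sigma_{0,m-1}s_{0,m-1}$), bounding the three factors by (\ref{ineq:scope1})--(\ref{ineq:scope2}), Proposition~\ref{F_n-estimate}, and a Lipschitz bound on $r_{0,m-1}$ in $x$. The one place you are more careful than the printed argument is in flagging that the bound on $\del_x r_{0,m-1}$ is not literally in Proposition~\ref{scopeestimate} and must come from a Cauchy estimate on the holomorphic extension; the paper simply asserts the bound $|1+\del_x r_{0,m-1}|<C'$ as a consequence of Propositions~\ref{scopeestimate} and~\ref{F_n-estimate}, so your remark is a legitimate tightening rather than a deviation.
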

\begin{proof}
Let $z=(x,y), \tilde{z}=(\tilde{x},\tilde{y}), \dot{z}=(\dot{x},\dot{y}), \dot{\tilde{z}}=(\dot{\tilde{x}},\dot{\tilde{y}})$ and so on. Then by Proposition~\ref{scopeestimate}
and our hypothesis that $\dot{z},\dot{\tilde{z}}$ lie on the same vertical line, we know
\begin{align}
|\dot{\tilde{x}}-\dot{x}|&= 0 \\
|\dot{\tilde{y}}-\dot{y}|&= |\sigma_{m,n}||\tilde{y}-y|. \notag
\end{align}
Applying Lemma~\ref{MVT} we then know there exists $\eta\in\brl \dot{z},\dot{\tilde{z}}\brr$ such that
\begin{align}
|\ddot{\tilde{x}}-\ddot{x}|&= |\del_y\phi_m(\eta)| |\sigma_{m,n}||\tilde{y}-y| \\
|\ddot{\tilde{y}}-\ddot{y}|&= 0. \notag
\end{align}
Then Proposition~\ref{scopeestimate} once more implies
\begin{align}\label{eqn:xyparts}
|\dddot{\tilde{x}}-\dddot{x}|&= |\sigma_{0,m-1}||s_{0,m-1}||[\ddot{\tilde{x}}+r_{0,m-1}(\ddot{\tilde{z}})]-[\ddot{x}+r_{0,m-1}(\ddot{z})]| \\
|\dddot{\tilde{y}}-\dddot{y}|&= 0. \notag
\end{align}
But, by the Mean Value Theorem and that $\ddot{\tilde{y}}=\ddot{y}$, we find there is a $\xi\in [\ddot{x},\ddot{\tilde{x}}]$ such that
\begin{align}\label{eqn:meanval}
|[\ddot{\tilde{x}}+r_{0,m-1}(\ddot{\tilde{z}})]-[\ddot{x}+r_{0,m-1}(\ddot{z})]|&=|1+\del_xr_{0,m-1}(\xi,\ddot{y})||\ddot{\tilde{x}}-\ddot{x}| \\
&=|1+\del_xr_{0,m-1}(\xi,\ddot{y})||\del_y\phi_m(\eta)||\sigma_{m,n}||\tilde{y}-y|. \notag
\end{align}
It follows from Propositions~\ref{scopeestimate} and~\ref{F_n-estimate} that there are constants $C',C'',C'''>0$, independent of $m,n$, such that
\begin{equation}\label{eqn:threebounds}
|1+\del_xr_{0,m-1}(\xi,\ddot{y})|<C', \quad |\del_y\phi_m(\eta)|<C''b^{p^m}, \quad |\sigma_{m,n}|<C'''\sigma^{n-m}.
\end{equation}
Hence it follows from (\ref{eqn:xyparts}), (\ref{eqn:meanval}) and (\ref{eqn:threebounds}) that there is a $C_0>0$ such that
\[\dist(\dddot{z},\dddot{\tilde{z}})=|\dddot{\tilde{x}}-\dddot{x}|<C_0\sigma^{2m}b^{p^m}\sigma^{n-m}\]
\end{proof}

\begin{prop}\label{h-overlap-far}
For well chosen words $\word{w}{}$ and $\word{\tilde{w}}{}$ and points $z_*^0,z_*^1\in\Cantor_*^\word{w}{}$ and
$\tilde{z}_*^0,\tilde{z}_*^1\in\Cantor_*^\word{\tilde{w}}{}$ so that $z_*^0,z_*^1,\tilde{z}_*^0$ and $\tilde{z}_*^0,\tilde{z}_*^1,z_*^1$ are well-placed triples, there exists a
constant $C_1>0$, depending upon $\Omega,\upsilon$ and the above words and points only, such that the following holds:
Let $F\in\A$ and let $\uline B$ be a boxing for $F$. Then there exist points $z^0,z^1\in B_{n+1}^{\word{w}{}}, \tilde{z}^0,\tilde{z}^1\in B_{n+1}^{\word{\tilde w}{}}$ such that either
\[\dist(\dddot{z}_0,\dddot{z}_1)>C_1\sigma^m\sigma^{2(n-m)} \quad \mbox{or} \quad \dist(\dddot{\tilde{z}}_0,\dddot{\tilde{z}}_1)>C_1\sigma^m\sigma^{2(n-m)}.\] 
\end{prop}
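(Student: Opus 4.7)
The heart of the matter is that the Euclidean distance we must bound from below is dominated by its $y$-component, which telescopes in a particularly clean way. Since $F_m$ has the form $(\phi_m,\pi_x)$ we have $\ddot{y}^{1}-\ddot{y}^{0}=\dot{x}^{1}-\dot{x}^{0}$, and since the outer scope $\MT_{0,m-1}$ acts on $y$ purely as multiplication by $\sigma_{0,m-1}$, we obtain $\dddot{y}^{1}-\dddot{y}^{0}=\sigma_{0,m-1}(\dot{x}^{1}-\dot{x}^{0})$. Taking $z^i=z^i_{n+1}$ and $\tilde{z}^i=\tilde{z}^i_{n+1}$ (which lie in $B^{\word{w}{}}_{n+1}$ and $B^{\word{\tilde{w}}{}}_{n+1}$ respectively by property~(B-4)), the proposition reduces to showing that at least one of $|\dot{x}^{1}-\dot{x}^{0}|$ or $|\dot{\tilde{x}}^{1}-\dot{\tilde{x}}^{0}|$ is bounded below by a uniform constant times $\sigma^{2(n-m)}$, for then $\dist(\dddot{z}^0,\dddot{z}^1)\geq|\dddot{y}^{1}-\dddot{y}^{0}|$ gives the desired estimate after multiplying by $|\sigma_{0,m-1}|$.

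To produce this lower bound, I would expand via Proposition~\ref{scopeestimate}:
\[
\dot{x}^{1}-\dot{x}^{0} = \sigma_{m,n}\bigl[s_{m,n}\bigl((x^1+r_{m,n}(z^1))-(x^0+r_{m,n}(z^0))\bigr) + t_{m,n}(y^1-y^0)\bigr].
\]
Condition~(A-\ref{prop5}) permits me to replace $x+r_{m,n}(z)$ by $v_*(x)-c_{m}y^2$ modulo $C\rho^{n-m}$, so the bracketed quantity reduces to $(y^1-y^0)\bigl[s_{m,n}\Upsilon_m(z^0,z^1)+t_{m,n}\bigr]$ with a remainder that, after multiplication by the outer $\sigma_{m,n}s_{m,n}$, is of size $O(\sigma^{2(n-m)}\rho^{n-m})$. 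The analogous identity holds for the $\tilde{z}$-pair with $\Upsilon_m(\tilde{z}^0,\tilde{z}^1)$ in place of $\Upsilon_m(z^0,z^1)$.

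The key combinatorial input is condition~(A-\ref{prop3}), namely $|\Upsilon_m(z^0_{n+1},z^1_{n+1})-\Upsilon_m(\tilde{z}^0_{n+1},\tilde{z}^1_{n+1})|>\kappa_0/2$. Multiplying through by $|s_{m,n}|>\sigma^{n-m}(1-C\rho^m)$ and applying the triangle inequality forces
\[
\max\bigl(|s_{m,n}\Upsilon_m(z^0,z^1)+t_{m,n}|,\ |s_{m,n}\Upsilon_m(\tilde{z}^0,\tilde{z}^1)+t_{m,n}|\bigr) > \tfrac{1}{4}|s_{m,n}|\kappa_0.
\]
Without loss of generality the maximum is realised on the $z$-pair; combining this with $|y^1_{n+1}-y^0_{n+1}|>\kappa_3/2$ from condition~(A-\ref{prop4}) and the lower bound on $|\sigma_{m,n}|$ from Proposition~\ref{scopeestimate}, I obtain $|\dot{x}^{1}-\dot{x}^{0}|>c_1\sigma^{2(n-m)}$ for some $c_1>0$ depending only on $\Omega$, $\upsilon$ and the fixed words and points. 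Multiplying by $|\sigma_{0,m-1}|\geq c_2\sigma^m$ then delivers $|\dddot{y}^{1}-\dddot{y}^{0}|>C_1\sigma^m\sigma^{2(n-m)}$.

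The main technical obstacle is the book-keeping of the error terms so that the main estimate of order $\sigma^{2(n-m)}$ survives. The remainder of size $\sigma^{2(n-m)}\rho^{n-m}$ inherited from condition~(A-\ref{prop5}) is smaller than the leading term by a factor $\rho^{n-m}$, which is made small by the choice of $N$ via conditions~(\ref{deltaprop3}) and~(\ref{prop6}) in the construction of $\A$, given $n-m>N$. The second well-placed triple $(\tilde{z}_*^0,\tilde{z}_*^1,z_*^1)$ appearing in the hypothesis is not needed for the estimate above, which uses only the combinatorial separation~(A-\ref{prop3}); presumably it secures an additional comparison needed in the companion horizontal-overlap step that follows.
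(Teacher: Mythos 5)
Your proof is correct and follows essentially the same route as the paper's: expand $\dot{x}^1-\dot{x}^0$ via Proposition~\ref{scopeestimate}, observe that this equals $\ddot{y}^1-\ddot{y}^0$ and hence $\dddot{y}^1-\dddot{y}^0$ up to the factor $\sigma_{0,m-1}$, replace $x+r_{m,n}$ by the universal $v_*(x)-c_m y^2$ using (A-\ref{prop5}), and then invoke the separation $|\Upsilon_m(z^0,z^1)-\Upsilon_m(\tilde z^0,\tilde z^1)|>\kappa_0/2$ from (A-\ref{prop3}) together with a triangle inequality to force at least one of the two slopes to sit at distance bounded below from $-t_{m,n}/s_{m,n}$. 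Your side-remark that the second well-placed triple is not used in this particular estimate is also a fair reading of the argument; what the proof actually consumes is the nonvanishing of $\kappa_0$, the lower bounds on $\kappa_3,\kappa_4$ in (A-\ref{prop4}), and the smallness condition (\ref{prop6}) on the $\rho^{n-m}$ error.
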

\begin{proof}
Let $z^0=z_{n+1}^0, z^1=z_{n+1}^1$ and $\tilde{z}^0=\tilde{z}_{n+1}^0, \tilde{z}^1=\tilde{z}_{n+1}^1$.
By Proposition~\ref{scopeestimate}
\begin{align}
&|\dot{x}^1-\dot{x}^0| \\
&=|\sigma_{m,n}|\left| s_{m,n}([x^1+r_{m,n}(z^1)]-[x^0+r_{m,n}(z^0)])+t_{m,n}(y^1-y^0)\right| \notag
\end{align}
Applying Proposition~\ref{MVT} we then get
\begin{align}
&|\ddot{y}^1-\ddot{y}^0|=|\dot{x}^1-\dot{x}^0| \\
&=|\sigma_{m,n}|\left| s_{m,n}([x^1+r_{m,n}(z^1)]-[x^0+r_{m,n}(z^0)])+t_{m,n}(y^1-y^0)\right|. \notag
\end{align}
Then again applying Proposition~\ref{scopeestimate} we have
\begin{align}\label{y-dist}
&|\dddot{y}^1-\dddot{y}^0| \\
&=|\sigma_{0,m-1}||\sigma_{m,n}|\left|s_{m,n}([x^1+r_{m,n}(z^1)]-[x^0+r_{m,n}(z^0)])+t_{m,n}(y^1-y^0)\right|. \notag
\end{align}
By the same argument a similar expression holds for $|\dddot{\tilde{y}}^1-\dddot{\tilde{y}}^0|$.

It follows from Properties~(A-\ref{prop4}) that
\begin{equation}
|([x^1+r_{m,n}(z^1)]-[x^0+r_{m,n}(z^0)])-([v_*(x^1)+c_m(y^1)^2]-[v_*(x^0)+c_m(y^0)^2])|<2C\rho^{n-m}
\end{equation}
and
\begin{equation}
|([\tilde{x}^1+r_{m,n}(\tilde{z}^1)]-[\tilde{x}^0+r_{m,n}(\tilde{z}^0)])-([v_*(\tilde{x}^1)+c_m(\tilde{y}^1)^2]-[v_*(\tilde{x}^0)+c_m(\tilde{y}^0)^2])|<2C\rho^{n-m}.
\end{equation}
Then dividing by $|y^1-y^0|$ and applying~(A-\ref{prop5}) gives us
\begin{equation}
\left|\frac{[x^1+r_{m,n}(z^1)]-[x^0+r_{m,n}(z^0)]}{y^1-y^0}-\Upsilon_m(z^0,z^1)\right|<\frac{4C}{\kappa_3}\rho^{n-m}
\end{equation}
and similarly
\begin{equation}
\left|\frac{[\tilde{x}^1+r_{m,n}(\tilde{z}^1)]-[\tilde{x}^0+r_{m,n}(\tilde{z}^0)]}{\tilde{y}^1-\tilde{y}^0}-\Upsilon_m(\tilde{z}^0,\tilde{z}^1)\right|<\frac{4C}{\kappa_4}\rho^{n-m}.
\end{equation}
But by Properties~(A-\ref{prop3}) and~(\ref{prop6}) this implies
\[\kappa_0/4<\left|\frac{[x^1+r_{m,n}(z^1)]-[x^0+r_{m,n}(z^0)]}{y^1-y^0}-\frac{[\tilde{x}^1+r_{m,n}(\tilde{z}^1)]-[\tilde{x}^0+r_{m,n}(\tilde{z}^0)]}{\tilde{y}^1-\tilde{y}^0}\right|\]
and therefore either
\begin{equation}
\kappa_0/8<\left|\frac{[x^1+r_{m,n}(z^1)]-[x^0+r_{m,n}(z^0)]}{y^1-y^0}+\frac{t_{m,n}}{s_{m,n}}\right|
\end{equation}
or
\begin{equation}
\kappa_0/8<\left|\frac{[\tilde{x}^1+r_{m,n}(\tilde{z}^1)]-[\tilde{x}^0+r_{m,n}(\tilde{z}^0)]}{\tilde{y}^1-\tilde{y}^0}+\frac{t_{m,n}}{s_{m,n}}\right|
\end{equation}
or possibly both.

Now by Proposition~\ref{scopeestimate} there are constants $C',C'',C'''>0$ such that
\[|\sigma_{0,m-1}|>C'\sigma^m, \quad |\sigma_{m,n}|>C''\sigma^{n-m}, \quad |s_{m,n}|>C'''\sigma^{n-m}.\]
This, together with Property~(A-\ref{prop4}), equality~(\ref{y-dist}) and the estimate in the previous paragraph, implies there is a constant $C_1>0$ such that either
\[\dist(\dddot{z}^0,\dddot{z}^1)>C_1\sigma^m \sigma^{2(n-m)}\] 
or
\[\dist(\dddot{\tilde{z}}^0,\dddot{\tilde{z}}^1)>C_1\sigma^m \sigma^{2(n-m)}.\]
\end{proof}

We distill these three results into the following.
\begin{prop}\label{ifh-overlap}
For $\word{w}{},\word{\tilde{w}}{}\in W^*$ well chosen there exist constants $C_0,C_1>0$, depending upon $\upsilon,\Omega$ only, such that 
given $F\in\A$ the following holds:
for any boxing $\uline B$ with property $\Hor_{\word{w}{},\word{\tilde{w}}{}}(m,n)$ the pieces $B_0^{0^{m}10^{n-m}\word{w}{}},B_0^{0^{m}10^{n-m}\word{\tilde{w}}{}}\in
\uline{B}_0$ of depth $n+length(\word{w}{})$ satisfying
\[\dist(B_0^{0^{m}10^{n-m}\word{w}{}},B_0^{0^{m}10^{n-m}\word{\tilde{w}}{}})<C_0\sigma^{2m}b^{2p^m}\]
and
\[\diam(B_0^{0^{m}10^{n-m}\word{w}{}}) \ \mbox{or} \ \diam(B_0^{0^{m}10^{n-m}\word{\tilde{w}}{}}) >C_1\sigma^mb^{2p^m}\]
\end{prop}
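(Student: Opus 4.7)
The plan is to combine Propositions~\ref{h-overlap-close} and~\ref{h-overlap-far} with Corollary~\ref{cor:comparable}.

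First, I would extract from the hypothesis $\Hor_{\word{w}{},\word{\tilde{w}}{}}(m,n)$ a pair of points $z\in B_{n+1}^{\word{w}{}}$ and $\tilde z\in B_{n+1}^{\word{\tilde{w}}{}}$ satisfying $\pi_x(\dot z)=\pi_x(\dot{\tilde z})$. Tracking addresses through the composition $\MT_{0,m-1}\circ F_m\circ\MT_{m,n}$ (which prepends the word $0^m 1 0^{n-m}$ to the address of any piece, by the coherence conditions (B-1)--(B-4) applied at each intermediate height) places $\dddot z\in B_0^{0^{m}10^{n-m}\word{w}{}}$ and $\dddot{\tilde z}\in B_0^{0^{m}10^{n-m}\word{\tilde{w}}{}}$. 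Proposition~\ref{h-overlap-close} then directly yields
\[\dist\bigl(B_0^{0^{m}10^{n-m}\word{w}{}},B_0^{0^{m}10^{n-m}\word{\tilde{w}}{}}\bigr)\le\dist(\dddot z,\dddot{\tilde z})<C_0'\sigma^{2m}b^{p^m}\sigma^{n-m}.\]

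Second, for the diameter bound, I would apply Proposition~\ref{h-overlap-far}, which requires no overlap hypothesis: since $\word{w}{},\word{\tilde{w}}{}$ are well chosen and $F\in\A$, it produces canonical quadruples $z^0,z^1\in B_{n+1}^{\word{w}{}}$ and $\tilde z^0,\tilde z^1\in B_{n+1}^{\word{\tilde{w}}{}}$ for which either $\dist(\dddot z^0,\dddot z^1)>C_1'\sigma^m\sigma^{2(n-m)}$ or $\dist(\dddot{\tilde z}^0,\dddot{\tilde z}^1)>C_1'\sigma^m\sigma^{2(n-m)}$. Since each such pair lies inside one of the two pieces at height~$0$, this furnishes a lower bound on the diameter of that piece.

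Third, to convert the mixed exponential quantities $\sigma^{2m}b^{p^m}\sigma^{n-m}$ and $\sigma^m\sigma^{2(n-m)}$ into the symmetric quantities $\sigma^{2m}b^{2p^m}$ and $\sigma^m b^{2p^m}$ stated in the conclusion, I need the comparability $b^{p^m}\asymp\sigma^{n-m}$. Corollary~\ref{cor:comparable} supplies exactly this, but its hypothesis demands exact horizontal alignment $\pi_x(\dot z_{n+1}^0)=\pi_x(\dot{\tilde z}_{n+1}^0)$ of the distinguished Cantor points, which is \emph{not} implied by horizontal overlap of the larger enclosing boxes. This is the main obstacle. To resolve it I would invoke Lemma~\ref{key} on the same points $z,\tilde z$ realizing the overlap: for these, $\pi_x(\dot z)=\pi_x(\dot{\tilde z})$ holds automatically, and one can choose them so that $|\pi_y(z)-\pi_y(\tilde z)|>K$ for some $K>0$ depending only on $\upsilon$, using the disjointness~(B-2), the containment~(B-5) justified in Lemma~\ref{lem:not-comparable}, and the vertical separation $|\tilde{y}_{n+1}^0-y_{n+1}^0|>\kappa_2/2$ from Property~(A-\ref{prop4}) of membership in $\A$. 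Lemma~\ref{key} then forces $ab^{p^m}/\sigma^{n-m}$ to lie within $\bigo(\max(\rho^m,\rho^{n-m}))$ of the bounded nonzero quantity $|\Upsilon_*(z,\tilde z)|$, producing the comparability and absorbing the extraneous powers of $\sigma^{n-m}$ into constants $C_0,C_1$ depending only on $\upsilon,\Omega$.
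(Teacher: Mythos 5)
Your proof follows the same route as the paper's own: Propositions~\ref{h-overlap-close} and~\ref{h-overlap-far} supply the distance and diameter bounds in terms of $\sigma^{2m}b^{p^m}\sigma^{n-m}$ and $\sigma^m\sigma^{2(n-m)}$, and the conclusion then comes from the comparability $b^{p^m}\asymp\sigma^{n-m}$. The only difference is where you source that comparability. The paper's proof cites Corollary~\ref{cor:comparable} outright, whose stated hypothesis is the exact alignment $\pi_x(\dot{z}_{n+1}^0)=\pi_x(\dot{\tilde{z}}_{n+1}^0)$ of the distinguished Cantor points, and you correctly observe that $\Hor_{\word{w}{},\word{\tilde{w}}{}}(m,n)$ only hands you alignment of \emph{some} pair $z,\tilde z$ inside the boxing pieces, not of those particular Cantor points. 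Returning directly to Lemma~\ref{key} on the overlap pair is the right repair, and it is in fact what the proof of Corollary~\ref{cor:comparable} itself reduces to, so you are being more careful on this step rather than charting a genuinely different course. The one place that still needs tightening is your justification of $|\pi_y(z)-\pi_y(\tilde z)|>K$: disjointness (B-2) alone gives no vertical separation, and two disjoint simply connected pieces inside the same parent canonical piece can in principle overlap vertically even though the Cantor pieces they contain are far apart. What actually supplies the lower bound is that (B-5), combined with the shrinking of the canonical pieces with depth, confines $B_{n+1}^{\word{w}{}}$ and $B_{n+1}^{\word{\tilde w}{}}$ to a small neighbourhood of $\Cantor_{n+1}^{\word{w}{}}$ and $\Cantor_{n+1}^{\word{\tilde w}{}}$, whose vertical gap is at least $\kappa_2/2$ by membership in $\A$; one must then choose the aligned pair on the portion of the boxes near those Cantor pieces. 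Once that bookkeeping is filled in, your argument reproduces the paper's proof while giving a cleaner account of the comparability step.
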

\begin{proof}
Propositions~\ref{h-overlap-close} implies
\[\dist(B_0^{0^{m}10^{n-m}\word{w}{}},B_0^{0^{m}10^{n-m}\word{\tilde{w}}{}})<C_0\sigma^{m}b^{p^m}\sigma^{n-m},\]
while Proposition~\ref{h-overlap-far} implies one of
\[\diam(B_0^{0^{m}10^{n-m}\word{w}{}})>C_1\sigma^m \sigma^{2(n-m)},\quad \diam(B_0^{0^{m}10^{n-m}\word{\tilde{w}}{}})>C_1\sigma^m \sigma^{2(n-m)}.\]
is true. However Corollary~\ref{cor:comparable} implies $b^{p^m}$ and $\sigma^{n-m}$ are comparable. Hence the result follows.
\end{proof}
\begin{rmk}
Observe these bounds have no dependence upon $n$, the height at which the overlapping boxes `originate'. This suggests that only the overlapping distorts the geometry and not
that they are close to the tip, $\tau_m$, of $F_m$, which is a crucial part of our estimate. 
\end{rmk}

\section{A Condition for Horizontal Overlap}\label{universal}
Now we wish to show that this horizontal overlapping behaviour occurs sufficiently often.
Recall that in the previous section we fixed well chosen words $\word{w}{}, \word{\tilde{w}}{}\in W^*$ with points $z_*^0,z_*^1\in\Cantor_*^\word{w}{}$ and
$\tilde{z}_*^0,\tilde{z}_*^1\in\Cantor_*^\word{\tilde{w}}{}$ so that $z_*^0,z_*^1$ and $\tilde{z}_*^0$ are well-placed.
\begin{prop}\label{parameterh-overlap}
Given well chosen words $\word{w}{},\word{\tilde{w}}{}\in W^*$ with well placed points $z_*^0,z_*^1\in\Cantor_*^{\word{w}{}}, \tilde{z}\in\Cantor_*^{\word{\tilde{w}}{}}$ there
exist constants $0<A_0<A_1$, depending upon $\upsilon$ and $\Omega$ also, such that the following holds: given $F\in\A$ and any boxing $\uline B$,
if
\begin{equation}\label{universalinterval}\tag{\dag}
A_0<\frac{b_F^{p^m}}{\sigma^{n-m}}<A_1
\end{equation}
then property $\Hor_{\word{w}{},\word{\tilde{w}}{}}(m,n)$ is satisfied. That is, $B^{0^{n-m}\word{w}{}}_{m}$ and $B^{0^{n-m}\word{\tilde w}{}}_{m}$ horizontally overlap.
\end{prop}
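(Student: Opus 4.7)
The plan is threefold: (1) choose $A_0<A_1$ so that via Property~(A-\ref{deltaprop2}) the hypothesis $(\dag)$ forces $-t_{m,n}/s_{m,n}$ into a compact subinterval of $M$; (2) use Proposition~\ref{scopeestimate} to write the horizontal coordinates of $\dot z^0,\dot z^1,\dot{\tilde z}^0$ explicitly and show that $\dot{\tilde x}^0$ lies strictly between $\dot x^0$ and $\dot x^1$; (3) conclude horizontal overlap from connectedness of the image boxes.

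The core computation, via the formula for $\MT_{m,n}$ in Proposition~\ref{scopeestimate}, is
\[\dot{\tilde x}^0-\dot x^i=\sigma_{m,n}s_{m,n}(\tilde y^0-y^i)\bigl[\Xi_i+t_{m,n}/s_{m,n}\bigr],\qquad i=0,1,\]
where $\Xi_i=\bigl((\tilde x^0+r_{m,n}(\tilde z^0))-(x^i+r_{m,n}(z^i))\bigr)/(\tilde y^0-y^i)$. Since $t_{m,n}/s_{m,n}=-|t_{m,n}/s_{m,n}|<0$ by Remark~\ref{rmk:signconvention}, the two brackets have opposite signs exactly when $|t_{m,n}/s_{m,n}|$ separates $\Xi_0$ from $\Xi_1$. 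Property~(A-\ref{prop5}) together with (A-\ref{prop4}) and the largeness condition (\ref{deltaprop3}) give $|\Xi_i-\Upsilon_m(z^i,\tilde z^0)|<\delta/3$; Property~(A-\ref{deltaprop1}) then places $\Upsilon_m(z^0,\tilde z^0)$ within $\delta/3$ of $\Upsilon_*(z_*^0,\tilde z_*^0)$. To control $\Upsilon_m(z^1,\tilde z^0)$ I would use the exact convex combination identity
\[\Upsilon_m(z^0,\tilde z^0)=(1-\tilde\kappa_1)\Upsilon_m(z^1,\tilde z^0)+\tilde\kappa_1\Upsilon_m(z^0,z^1),\qquad \tilde\kappa_1:=\frac{y^1-y^0}{\tilde y^0-y^0},\]
which one checks directly from the definition of $\Upsilon_m$ (the $c_m$-terms cancel), combined with the fact $\tilde\kappa_1\in(\kappa_1/2,1)$ from (A-\ref{prop2}) and the monotone decrease of $\kappa\mapsto(\alpha-\kappa\beta)/(1-\kappa)$ in $\kappa$ when $\beta>\alpha$. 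This yields $\Upsilon_m(z^1,\tilde z^0)<[\Upsilon_m(z^0,\tilde z^0)-(\kappa_1/2)\Upsilon_m(z^0,z^1)]/(1-\kappa_1/2)$, and after absorbing the (A-\ref{deltaprop1}) error this upper bound is at most the lower endpoint of $M$ plus $(1+\kappa_1/2)(\delta/3)/(1-\kappa_1/2)$; the definition (\ref{deltaprop2}) of $M_\delta$ is engineered so that this quantity is strictly less than the lower endpoint of $M_\delta$ by a margin of exactly $2\delta/3$.

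Setting $aA_0=\text{(lower endpoint of $M_\delta$)}+\delta/3$ and $aA_1=\text{(upper endpoint of $M_\delta$)}-\delta/3$, the hypothesis $(\dag)$ together with (A-\ref{deltaprop2}) forces $|t_{m,n}/s_{m,n}|\in M_\delta$, and the estimates above then give $\Xi_1<|t_{m,n}/s_{m,n}|<\Xi_0$. Because $\sigma_{m,n}s_{m,n}(\tilde y^0-y^i)$ has the same sign for both $i=0,1$ (both $\tilde y^0-y^0$ and $\tilde y^0-y^1$ are positive by (A-\ref{prop1})), the opposite signs of the two brackets place $\dot{\tilde x}^0$ strictly between $\dot x^0$ and $\dot x^1$. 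The connected image $\MT_{m,n}(B^{\word{w}{}}_{n+1})$ contains both $\dot z^0$ and $\dot z^1$, so its $x$-projection covers $\dot{\tilde x}^0$, which lies in the $x$-projection of $\MT_{m,n}(B^{\word{\tilde w}{}}_{n+1})$; the two images therefore horizontally overlap. The main obstacle is the three-layered error bookkeeping—the $C\rho^{n-m}$ tail from Proposition~\ref{scopeestimate}(iv), the $\delta/3$ slack in (A-\ref{deltaprop1}) and (A-\ref{deltaprop2}), and the $\kappa_1/2$-vs-$\kappa_1$ gap in (A-\ref{prop2})—all of which the definitions (\ref{deltaprop2})--(\ref{deltaprop3}) in Section~\ref{construction} were carefully calibrated to absorb, with the constants $A_0,A_1$ depending only on $\upsilon,\Omega$ and the fixed well-chosen data.
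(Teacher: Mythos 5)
Your proof is correct and follows the same overall strategy as the paper: express the horizontal coordinates of the scoped points via Proposition~\ref{scopeestimate}, reduce the overlap condition to a comparison between $|t_{m,n}/s_{m,n}|$ and $\Upsilon$-slopes, then use Properties~(A-\ref{deltaprop1}) and~(A-\ref{deltaprop2}) to transfer the comparison to the hypothesis $(\dag)$. Your factorisation is organised a bit differently, and I think slightly more transparently: you write $\dot{\tilde x}^0-\dot{x}^i=\sigma_{m,n}s_{m,n}(\tilde{y}^0-y^i)[\Xi_i+t_{m,n}/s_{m,n}]$ symmetrically in $i=0,1$ and reduce to $\Xi_1<|t_{m,n}/s_{m,n}|<\Xi_0$, whereas the paper compares $\dot{\tilde x}^0-\dot x^0$ against $\dot{x}^1-\dot{x}^0$ and divides both by the common factor $\sigma_{m,n}s_{m,n}(\tilde{y}^0-y^0)$, which introduces $\tilde\kappa_1=(y^1-y^0)/(\tilde{y}^0-y^0)$ explicitly. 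Your convex-combination identity $\Upsilon_m(z^0,\tilde z^0)=(1-\tilde\kappa_1)\Upsilon_m(z^1,\tilde z^0)+\tilde\kappa_1\Upsilon_m(z^0,z^1)$ (exact by telescoping the increments of $V(z)=v_*(x)-c_my^2$) and the monotonicity of $\kappa\mapsto(\alpha-\kappa\beta)/(1-\kappa)$ make explicit what the paper achieves implicitly by replacing $\tilde\kappa_1$ with its lower bound $\kappa_1/2$, and in fact avoids a small sign subtlety in the paper's replacement step (which tacitly needs $\Upsilon(z^1,z^0)+t/s>0$). This genuinely tidies the argument.

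One calibration point to fix: you set $aA_0=\inf M_\delta+\delta/3$ and $aA_1=\sup M_\delta-\delta/3$ so that $(\dag)$ together with Property~(A-\ref{deltaprop2}) forces $|t_{m,n}/s_{m,n}|\in M_\delta$; but the choice of $\delta$ in display~(\ref{deltaprop2}) only ensures that $M_\delta$ is a \emph{nonempty} interval, not that it has length exceeding $2\delta/3$, so your $[A_0,A_1]$ may be empty. The paper instead takes $aA_0=\inf M_\delta$, $aA_1=\sup M_\delta$ and absorbs the $\pm\delta/3$ slack from~(A-\ref{deltaprop2}) separately inside conditions~(\ref{condition-1}) and~(\ref{condition-2}). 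Either pick the paper's endpoints directly, or strengthen the requirement on $\delta$ so that $|M_\delta|>2\delta/3$; with that adjustment your argument is complete.
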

\begin{proof}
Let $z^0=(x^0,y^0)=z_{n+1}^0, z^1=(x^1,y^1)=z_{n+1}^1$ and $\tilde{z}=(\tilde{x},\tilde{y})=\tilde{z}_{n+1}^0$. 
As we will take $m,n$ to be fixed integers for notational simplicity we also denote $\sigma_{m,n}, r_{m,n}, s_{m,n}, t_{m,n}, \Upsilon_m$ and $c_m$ by $\sigma, r, s, t, \Upsilon$ and $c$ respectively. We will still denote the limits of $\Upsilon_m$ and $c_m$ by $\Upsilon_*$ and $c_*$. 
Observe that $B_m^{0^{n-m}\word{w}{}}$ and $B_m^{0^{n-m}\word{\tilde{w}}{}}$ horizontally overlap if $\dot{x}^0<\dot{\tilde{x}}<\dot{x}^1$ or, equivalently,
\begin{equation}\label{overlap1}
0<\dot{\tilde{x}}^0-\dot{x}^0<\dot{x}^1-\dot{x}^0.
\end{equation}
\begin{figure}[htp]
\centering
\psfrag{z0}{$z^0$}
\psfrag{z1}{$z^1$}
\psfrag{zbar}{$\tilde{z}$}
\psfrag{z0dot}{$\dot{z}^0$}
\psfrag{z1dot}{$\dot{z}^1$}
\psfrag{zbardot}{$\dot{\tilde{z}}$}
\psfrag{amt}{$\MT_{m,n}$}
\psfrag{a}{$\Dom(F_m)$}
\psfrag{b}{$\Dom(F_{n+1})$}

\includegraphics[width=1.0\textwidth]{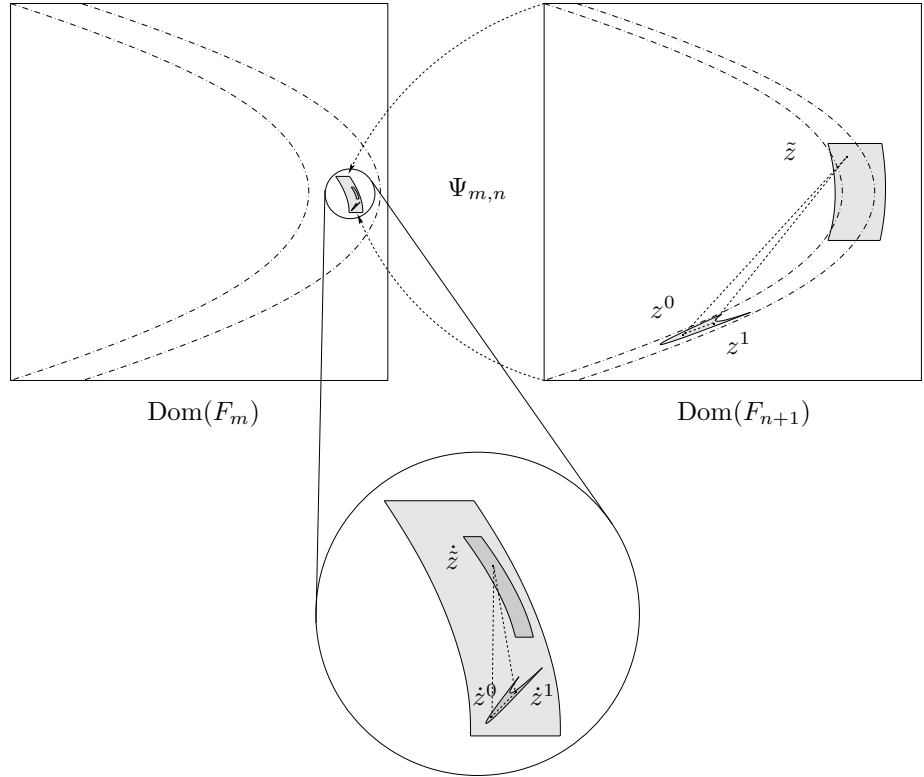}
\caption{overlapping pieces}\label{fig:overlap}
\end{figure}
\newpage
By Proposition~\ref{scopeestimate}, for $i=0,1$,
\begin{equation}
\dot{x}^i=\sigma(s[x^i+r(z^i)]+ty^i), \quad
\dot{\tilde{x}}=\sigma(s[\tilde{x}+r(\tilde{z})]+t\tilde{y})
\end{equation}
and therefore
\begin{align}
\dot{\tilde{x}}-\dot{x}^0&=\sigma(s([\tilde{x}+r(\tilde{z})]-[x^0+r(z^0)])+t(\tilde{y}-y^0)) \notag \\
\dot{x}^1-\dot{x}^0&=\sigma(s([x^1+r(z^1)]-[x^0+r(z^0)])+t(y^1-y^0)).
\end{align}
By Property~(A-\ref{prop5})
\begin{align}
\left| [\dot{\tilde{x}}-\dot{x}^0]-\sigma(s([v_*(\tilde{x})-v_*(x^0)]+c[(\tilde{y})^2-(y^0)^2])+t(\tilde{y}-y^0))\right|&<2C\sigma s\rho^{n-m} \notag \\
\left| [\dot{x}^1-\dot{x}^0]-\sigma(s([v_*(x^1)-v_*(x^0)]+c[(y^1)^2-(y^0)^2])+t(y^1-y^0))\right|&<2C\sigma s\rho^{n-m}.
\end{align}
Hence sufficient conditions for~(\ref{overlap1}) to hold are
\begin{equation}
0<\sigma(s([v_*(\tilde{x})-v_*(x^0)]+c[(\tilde{y})^2-(y^0)^2]+t(\tilde{y}-y^0))-2C\sigma s\rho^{n-m}
\end{equation}
and
\begin{align}
&\sigma(s([v_*(\tilde{x})-v_*(x^0)]+c[(\tilde{y})^2-(y^0)^2])+t(\tilde{y}-y^0)) \notag \\
&<\sigma(s([v_*(x^1)-v_*(x^0)]+c[(y^1)^2-(y^0)^2])+t(y^1-y^0))-4\sigma s\rho^{n-m}.
\end{align}
Since, by hypothesis, $\sigma, s>0$, and by hypothesis~(A-\ref{prop1}) we know $\tilde{y}-y>0$, dividing both of these inequalities by $\sigma s(\tilde{y}-y)$ and applying hypothesis~(A-\ref{prop4}) gives us
\begin{equation}
\frac{4C\rho^{n-m}}{\kappa_2}<\frac{2C\rho^{n-m}}{\tilde{y}-y}<\Upsilon(\tilde{z},z^0)+\frac{t}{s}
\end{equation}
and
\begin{equation}
\Upsilon(\tilde{z},z^0)+\frac{t}{s}<\frac{\kappa_1}{2}\left(\Upsilon(z^1,z^0)+\frac{t}{s}\right)-\frac{4C\rho^{n-m}}{\tilde{y}-y}<\frac{\kappa_1}{2}\left(\Upsilon(z^1,z^0)+\frac{t}{s}\right)-\frac{8C\rho^{n-m}}{\kappa_2}.
\end{equation}
Hence if
\begin{equation}\label{overlap2-a}
\frac{4C\rho^{n-m}}{\kappa_2}<\Upsilon(\tilde{z},z^0)+\frac{t}{s}
\end{equation}
and
\begin{equation}\label{overlap2-b}
\Upsilon(\tilde{z},z^0)+\frac{t}{s}<\frac{\kappa_1}{2}+\frac{t}{s}<\frac{\kappa_1}{2}\left(\Upsilon(z^1,z^0)+\frac{t}{s}\right)-\frac{8C\rho^{n-m}}{\kappa_2}
\end{equation}
then~(\ref{overlap1}) is satisfied and so there is horizontal overlap. Now let us show that there exists constants $0<A_0<A_1$ such that~(\ref{universalinterval}) implies inequalities~(\ref{overlap2-a}) and~(\ref{overlap2-b}). Let us treat inequality~(\ref{overlap2-a}) first. We claim that
\begin{equation}\label{condition-1}
\frac{ab^{p^m}}{\sigma^{n-m}}<\Upsilon_*(\tilde{z}_*,z^0_*)-\delta
\end{equation}
implies~(\ref{overlap2-a}). By Property~(A-\ref{deltaprop2})
\begin{equation}
\left|\frac{t}{s}\right|<\frac{ab^{p^m}}{\sigma^{n-m}}+\frac{\delta}{3}
\end{equation}
and by Property~(A-\ref{deltaprop1})
\begin{equation}
\Upsilon_*(\tilde{z}_*,z^0_*)<\Upsilon(\tilde{z},z^0)+\frac{\delta}{3}.
\end{equation}
Combining these gives us
\begin{equation}
\left|\frac{t}{s}\right|<\Upsilon(\tilde{z},z^0)-\frac{\delta}{3}.
\end{equation}
By hypothesis~(\ref{deltaprop3}) and Property~(A-\ref{prop2}) we know $\frac{8C\rho^{n-m}}{\kappa_2}<\frac{\delta}{3}$. Hence
\begin{equation}
\left|\frac{t}{s}\right|<\Upsilon(\tilde{z},z^0)-\frac{8C\rho^{n-m}}{\kappa_2}.
\end{equation}
Finally recall that $t/s<0$, so multiplying by $-1$ and reversing the above inequality gives~(\ref{overlap2-a}) as required.

Next we claim that
\begin{equation}\label{condition-2}
\frac{\Upsilon_*(\tilde{z}_*,z^0_*)-\frac{\kappa_1}{2}\Upsilon_*(z^1_*,z^0_*)}{1-\frac{\kappa_1}{2}}+\frac{\delta}{3}\frac{2}{1-\frac{\kappa_1}{2}}<\frac{ab^{p^m}}{\sigma^{n-m}}-\frac{\delta}{3}
\end{equation}
implies inequality~(\ref{overlap2-b}). 
From~(\ref{deltaprop3}) we know that $\frac{8C\rho^{n-m}}{\kappa_2\left(1-\frac{\kappa_1}{2}\right)}<\frac{\delta}{3}$ and from Property~(A-\ref{deltaprop1}) we know
\begin{align}
\frac{\Upsilon(\tilde{z},z^0)-\frac{\kappa_1}{2}\Upsilon(z^1,z^0)}{1-\frac{\kappa_1}{2}}
&< \frac{[\Upsilon_*(\tilde{z}_*,z^0_*)+\frac{\delta}{3}]-\frac{\kappa_1}{2}[\Upsilon_*(z^1_*,z^0_*)-\frac{\delta}{3}]}{1-\frac{\kappa_1}{2}} \\
&=\frac{\Upsilon_*(\tilde{z}_*,z^0_*)-\frac{\kappa_1}{2}\Upsilon_*(z^1_*,z^0_*)}{1-\frac{\kappa_1}{2}}+\frac{\delta}{3}\left(\frac{1+\frac{\kappa_1}{2}}{1-\frac{\kappa_1}{2}}\right)
\end{align}
Together these imply
\begin{equation}\label{overlap2-b-innerbound1}
\frac{\Upsilon(\tilde{z},z^0)-\frac{\kappa_1}{2}\Upsilon(z^1,z^0)}{1-\frac{\kappa_1}{2}}+\frac{8C\rho^{n-m}}{\kappa_2\left(1-\frac{\kappa_1}{2}\right)}
<\frac{\Upsilon_*(\tilde{z}_*,z^0_*)-\frac{\kappa_1}{2}\Upsilon_*(z^1_*,z^0_*)}{1-\frac{\kappa_1}{2}}+\frac{\delta}{3}\frac{2}{1-\frac{\kappa_1}{2}}.
\end{equation}
By Property~(A-\ref{deltaprop2}) we know
\begin{equation}\label{overlap2-b-innerbound2}
\frac{ab^{p^m}}{\sigma^{n-m}}-\frac{\delta}{3}<\left|\frac{t}{s}\right|
\end{equation}
so the above two inequalities~(\ref{overlap2-b-innerbound1}) and~(\ref{overlap2-b-innerbound2}) imply
\begin{equation}
\frac{\Upsilon(\tilde{z},z^0)-\frac{\kappa_1}{2}\Upsilon(z^1,z^0)}{1-\frac{\kappa_1}{2}}+\frac{8C\rho^{n-m}}{\kappa_2\left(1-\frac{\kappa_1}{2}\right)}
<\left|\frac{t}{s}\right|.
\end{equation}
Since $1-\frac{\kappa_1}{2}>0$, this is equivalent to
\begin{equation}
\Upsilon(\tilde{z},z^0)-\frac{\kappa_1}{2}\Upsilon(z^1,z^0)<\left|\frac{t}{s}\right|(1-\frac{\kappa_1}{2})-\frac{8C\rho^{n-m}}{\kappa_2}.
\end{equation}
Recalling that $t/s<0$ then tells us
\begin{equation}
\frac{t}{s}(1-\frac{\kappa_1}{2})+\frac{8C\rho^{n-m}}{\kappa_2}<\frac{\kappa_1}{2}\Upsilon(z^1,z^0)-\Upsilon(\tilde{z},z^0).
\end{equation}
which, upon rearranging, gives us
\begin{equation}
\Upsilon(\tilde{z},z^0)+\frac{t}{s}+\frac{8C\rho^{n-m}}{\kappa_2}<\frac{\kappa_1}{2}\left(\Upsilon(z^1,z^0)+\frac{t}{s}\right)
\end{equation}
which, by moving the error term to the right of the inequality sign gives us~(\ref{overlap2-b}) as required.

Finally set
\begin{align}
A_0&=a^{-1}\left[\left(\frac{\Upsilon_*(\tilde{z}_*,z^0_*)-\frac{\kappa_1}{2}\Upsilon_*(z^1_*,z^0_*)}{1-\frac{\kappa_1}{2}}\right)+\frac{\delta}{3}\left(\frac{3-\frac{\kappa_1}{2}}{1-\frac{\kappa_1}{2}}\right)\right] \\
A_1&=a^{-1}\left[\Upsilon_*(\tilde{z}_*,z^0_*)-\delta\right].
\end{align}
The interval $[A_0,A_1]$ is well defined by Property~(A-\ref{deltaprop2}).
Then inequality~(\ref{universalinterval}) implies, since $a>0$, together with~(\ref{condition-1}) and~(\ref{condition-2}) that inequalities~(\ref{overlap2-a}) and inequality~(\ref{overlap2-b}) hold and therefore the boxes overlap.
\end{proof}

\section{Construction of the Full Measure Set}\label{fullmeas}
We will now prove the following result.
\begin{thm}\label{thm:fullmeas}
Given any $0<A_0<A_1$, $0<\sigma<1$ and any $p\geq 2$ the set of parameters $b\in [0,1]$ for which there are infinitely many $0<m<n$ satisfying
\begin{equation}\label{universalinterval}\tag{\dag}
A_0<\frac{b^{p^m}}{\sigma^{n-m}}<A_1
\end{equation}
is a dense $G_\delta$ set with full Lebesgue measure.
\end{thm}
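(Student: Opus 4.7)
The plan is to linearise the condition by taking logarithms and reduce to a classical equidistribution statement for the $\times p$ map on the circle. Set $\beta=-\log b$, $s=-\log\sigma>0$ and $\alpha_i=\log A_i$. For $b\in(0,1)$ the inequality~(\ref{universalinterval}) is equivalent to
\[
\alpha_0+p^m\beta<(n-m)s<\alpha_1+p^m\beta,
\]
so, writing $k=n-m$, what is asked is that for infinitely many $m\geq1$ the open interval $I_m(\beta):=\bigl((\alpha_0+p^m\beta)/s,\,(\alpha_1+p^m\beta)/s\bigr)$, which has the fixed length $L:=(\alpha_1-\alpha_0)/s>0$, contains a positive integer. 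The trivial case $L\geq1$ gives $S=(0,1]$ up to a countable set (any interval of length at least $1$ with positive left endpoint contains an integer, and the left endpoint tends to $+\infty$ with $m$), so assume $L<1$.

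Under the assumption $L<1$, the interval $I_m(\beta)$ contains an integer if and only if the fractional part $\{(\alpha_0+p^m\beta)/s\}$ lies in $(1-L,1)$. Setting $\theta:=\{\beta/s\}$ and $c:=\alpha_0/s$, and using that $p\in\mathbb{Z}$ so $p^m\beta/s\equiv p^m\theta\pmod 1$, this condition reads $T_p^m(\theta)\in J$, where $T_p\colon x\mapsto px\bmod1$ is the $\times p$ map on $[0,1)$ and $J\subset\mathbb{R}/\mathbb{Z}$ is a fixed arc of length $L>0$. Thus the condition on $(m,n)$ has been translated into a statement about visits of the $T_p$-orbit of $\theta$ to a fixed set of positive measure.

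The classical fact that $T_p$ is Lebesgue-ergodic (easily verified by Fourier coefficients, since $\hat{f}(k)=\hat{f\circ T_p}(pk)$ forces $\hat f$ to be concentrated at $0$) combined with the Birkhoff ergodic theorem applied to $\mathbf{1}_J$ yields, for Lebesgue-a.e.\ $\theta\in[0,1)$,
\[
\lim_{N\to\infty}\frac1N\#\{0\leq m<N:T_p^m\theta\in J\}=L>0,
\]
so in particular $T_p^m\theta\in J$ infinitely often. The correspondence $b\mapsto\{-\log b/s\}$ is a countable disjoint union of smooth bijections with nonvanishing derivative onto $[0,1)$, so the null exceptional set in $\theta$ pulls back to a null set in $b$. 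Hence $S$ has full Lebesgue measure in $[0,1]$.

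Finally, writing
\[
S=\bigcap_{M\geq1}\bigcup_{\substack{m\geq M\\ n>m}}U_{m,n},\qquad U_{m,n}=\bigl\{b\in[0,1]:A_0<b^{p^m}/\sigma^{n-m}<A_1\bigr\},
\]
and observing that each $U_{m,n}$ is an open subinterval of $[0,1]$ (as the preimage of $(A_0,A_1)$ under a continuous monotone function of $b$), exhibits $S$ as a $G_\delta$ set; denseness is automatic once we have full measure. The only non-formal ingredient of the argument — and the main obstacle around which everything is organised — is the ergodicity of $T_p$; everything else is the bookkeeping of changes of variables and the observation that $p\in\mathbb{Z}$ makes the orbit $\{p^m\beta/s\}$ depend only on $\{\beta/s\}$.
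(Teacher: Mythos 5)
Your argument is correct, but it is quite different from the one the paper gives. The paper (in Proposition~\ref{G-thm}, supported by Lemmas~\ref{lem:diam-dist}--\ref{lem:gapfilling}) works directly with the intervals $I_{d,\delta}=[(\sigma^{d}A_0)^{\delta},(\sigma^{d}A_1)^{\delta}]$, splits into the same two cases you do ($\sigma A_1\geq A_0$ is exactly your $L\geq 1$), and in the nontrivial case runs an explicit, hands-on Cantor-type gap-filling iteration: each generation of gaps is refilled by intervals $I_{d'',\delta''}$ with much smaller $\delta''$, and Lemma~\ref{lem:gapfilling} gives a uniform lower bound $L_0=\frac14\left|\frac{1}{\log\sigma}\right|\left(1-\frac{A_0}{A_1}\right)$ on the proportion of each gap that is covered, so the uncovered set shrinks geometrically to measure zero; finitely many such iterated constructions are run in parallel to force membership in infinitely many $I_{d,\delta}$. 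You instead take logarithms, observe that $\{p^m\beta/s\}$ depends only on $\theta=\{\beta/s\}$ because $p\in\mathbb{Z}$, and recognise the condition as ``the $\times p$ orbit of $\theta$ visits a fixed arc $J$ of length $L$''; ergodicity of $T_p$ and Birkhoff then do all the measure-theoretic work, and the $G_\delta$ statement is immediate from the $\limsup$ description. Both are complete proofs. The trade-off: your route is shorter and conceptually cleaner, and actually gives more (a positive-density-of-visits statement, hence a positive lower density of admissible $m$'s), at the cost of importing ergodicity of $T_p$; the paper's elementary construction makes the set $S$ visible as an explicit countable intersection of finite unions of intervals with quantified measure loss at each stage, which is precisely the kind of information one would want for the Hausdorff-dimension question the authors raise as an open problem in the introduction. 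One small point worth making explicit in your write-up: in the $L=1$ borderline you should note that the countable exceptional set consists of those $b$ for which $(\alpha_0+p^m\beta)/s\in\mathbb{Z}$ for all but finitely many $m$, which is at most a single $b$ per arithmetic constraint; and in the change of variables $b\mapsto\theta$ the branches are $(\sigma^{k+1},\sigma^k]$, on each of which the derivative is bounded, so null sets pull back to null sets branch by branch.
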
 
\begin{rmk}
We note that this result is purely analytical; it has no dynamical content and as such is quite separate from the other sections.
\end{rmk}
We introduce the following notation, setting
\[d=n-m; \quad \delta=\delta(m)=1/p^m;\quad \alpha_i=\log A_i/\log\sigma=\log_\sigma A_i.\]
and letting $I_{d,\delta}$ be the set of $b$ which satisfy inequality~(\ref{universalinterval}). That is
\[I_{d,\delta}=\left[\sigma^{d\delta}A_0^{\delta},\sigma^{d\delta}A_1^{\delta}\right].\]
The following two lemmas are an easy calculation and are left to the reader.
\begin{lem}\label{lem:diam-dist}
\begin{enumerate}
\item $\diam (I_{d,\delta})=\sigma^{d\delta}(A_1^\delta-A_0^\delta)$.
\item If $I_{d+1,\delta}, I_{d,\delta}$ are disjoint then $I_{d+1,\delta}$ lies to the left of $I_{d,\delta}$.
\item If $I_{d',\delta'}, I_{d,\delta}$ are disjoint and $I_{d',\delta'}$ lies to the left of $I_{d,\delta}$ then
\[\dist(I_{d,\delta},I_{d',\delta'})=\sigma^{d\delta}(A_0^\delta-\sigma^{d'\delta'-d\delta}A_1^{\delta'}).\]
\end{enumerate}
\end{lem}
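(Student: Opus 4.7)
All three claims are direct calculations from the explicit formula $I_{d,\delta}=[\sigma^{d\delta}A_0^\delta,\sigma^{d\delta}A_1^\delta]$, using only the basic assumptions $0<\sigma<1$, $0<A_0<A_1$ and $\delta>0$. I would simply carry out each part in turn.

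For (i), the diameter of a closed interval equals the difference of its endpoints, so
\[\diam(I_{d,\delta})=\sigma^{d\delta}A_1^\delta-\sigma^{d\delta}A_0^\delta=\sigma^{d\delta}(A_1^\delta-A_0^\delta),\]
which is exactly the stated formula; no further work is needed here.

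For (ii), since the two intervals are closed and disjoint, one lies entirely to the left of the other. Suppose, for contradiction, that $I_{d+1,\delta}$ lies to the right of $I_{d,\delta}$. Then the left endpoint of $I_{d+1,\delta}$ exceeds the right endpoint of $I_{d,\delta}$, i.e.
\[\sigma^{(d+1)\delta}A_0^\delta>\sigma^{d\delta}A_1^\delta.\]
Dividing by $\sigma^{d\delta}>0$ and raising to the power $1/\delta>0$ gives $\sigma A_0>A_1$, hence $\sigma>A_1/A_0>1$, contradicting $\sigma<1$. Therefore $I_{d+1,\delta}$ must lie to the left of $I_{d,\delta}$, as claimed.

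For (iii), assuming $I_{d',\delta'}$ lies to the left of $I_{d,\delta}$ and that the two are disjoint, the distance between them is the left endpoint of $I_{d,\delta}$ minus the right endpoint of $I_{d',\delta'}$, i.e.
\[\dist(I_{d,\delta},I_{d',\delta'})=\sigma^{d\delta}A_0^\delta-\sigma^{d'\delta'}A_1^{\delta'}=\sigma^{d\delta}\bigl(A_0^\delta-\sigma^{d'\delta'-d\delta}A_1^{\delta'}\bigr),\]
after factoring out $\sigma^{d\delta}$. This matches the formula in the lemma. There is no substantive obstacle in any of the three parts; it is just a matter of writing down the endpoints and performing the appropriate subtraction (together with the sign/monotonicity observation for (ii)).
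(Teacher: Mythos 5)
Your computation is correct and is exactly the routine endpoint calculation the paper has in mind --- indeed the paper omits the proof, stating the lemma is ``an easy calculation and left to the reader.'' Each part (the diameter formula, the monotonicity of the left endpoints in $d$ forcing $I_{d+1,\delta}$ to the left when disjoint, and the gap formula after factoring out $\sigma^{d\delta}$) is handled correctly under the standing assumptions $0<\sigma<1$, $0<A_0<A_1$, $\delta>0$.
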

\begin{rmk}
In the proof of Proposition~\ref{G-thm} we will see there is a dichotomy: either, for a fixed $\delta>0$,  $I_{d,\delta}, I_{d+1,\delta}$ are always disjoint or they always intersect, for all
$d>0$, and moreover if property holds for one $\delta$ then it also holds for every choice of $\delta$. This depends on whether $A_1\sigma<A_0$ holds or not.
\end{rmk}
\begin{lem}\label{lem:d_maxmin}
Let $I_{d,\delta}, I_{d',\delta'}, I_{d'',\delta''}$ be pairwise disjoint and assume $I_{d',\delta'}$ lies to the left of $I_{d,\delta}$. Then $I_{d'',\delta''}$
lies to the right of $I_{d',\delta'}$ when
\[d''\leq\frac{\delta'}{\delta''}(d'+\alpha_1)-\alpha_0\]
and $I_{d'',\delta''}$ lies to the left of $I_{d,\delta}$ when
\[d''\geq\frac{\delta}{\delta''}(d+\alpha_0)-\alpha_1.\]
\end{lem}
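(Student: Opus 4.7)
The plan is to unravel the ``lies to the right of'' and ``lies to the left of'' conditions into inequalities on endpoints, then take $\log_\sigma$ (remembering that $\log\sigma<0$ reverses inequalities) and rearrange to recover the stated bounds. This is essentially bookkeeping, with the one mild pitfall being the sign of $\log\sigma$.

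First I would recall from the definition that
\[
I_{d,\delta}=\bigl[\sigma^{d\delta}A_0^{\delta},\,\sigma^{d\delta}A_1^{\delta}\bigr],
\]
and that for two disjoint intervals $I,J\subset\mathbb{R}$, $J$ lies to the right of $I$ iff $\min(J)\geq\max(I)$, while $J$ lies to the left of $I$ iff $\max(J)\leq\min(I)$. Applied to our situation, $I_{d'',\delta''}$ lies to the right of $I_{d',\delta'}$ is equivalent to
\[
\sigma^{d''\delta''}A_0^{\delta''}\geq \sigma^{d'\delta'}A_1^{\delta'},
\]
and $I_{d'',\delta''}$ lies to the left of $I_{d,\delta}$ is equivalent to
\[
\sigma^{d''\delta''}A_1^{\delta''}\leq \sigma^{d\delta}A_0^{\delta}.
\]

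Next I would take $\log_\sigma$ on both sides of each inequality. Since $0<\sigma<1$, we have $\log\sigma<0$, so the direction of inequality reverses. Using the notation $\alpha_i=\log_\sigma A_i$ from the excerpt, the first condition becomes
\[
d''\delta''+\delta''\alpha_0\leq d'\delta'+\delta'\alpha_1,
\]
and the second becomes
\[
d''\delta''+\delta''\alpha_1\geq d\delta+\delta\alpha_0.
\]
Dividing the first by $\delta''>0$ and rearranging gives
\[
d''\leq\frac{\delta'}{\delta''}(d'+\alpha_1)-\alpha_0,
\]
while doing the same with the second gives
\[
d''\geq\frac{\delta}{\delta''}(d+\alpha_0)-\alpha_1,
\]
which are precisely the two bounds in the statement.

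Finally, I would note that the hypothesis of pairwise disjointness of $I_{d,\delta}, I_{d',\delta'}, I_{d'',\delta''}$ together with the assumption that $I_{d',\delta'}$ lies to the left of $I_{d,\delta}$ is what allows us to identify the ordering conclusions; strictly speaking, the calculation above shows the endpoint inequality which, under disjointness, is equivalent to the one-sidedness. There is essentially no obstacle here beyond correctly tracking the sign flip induced by $\log\sigma<0$ and the division by the positive quantity $\delta''$; the result is a direct computation.
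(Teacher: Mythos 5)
Your proof is correct: unravelling the ordering into endpoint inequalities for $I_{d,\delta}=[\sigma^{d\delta}A_0^{\delta},\sigma^{d\delta}A_1^{\delta}]$, applying $\log_\sigma$ with the sign reversal, and dividing by $\delta''>0$ is exactly the ``easy calculation'' the paper has in mind (it leaves this lemma to the reader without written proof). Nothing is missing.
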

\begin{lem}
Suppose $I_{d,\delta}, I_{d',\delta'}$ are disjoint and $I_{d',\delta'}$ lies to the left of $I_{d,\delta}$. Let $0<\delta''<\min (\delta,\delta')$. Let
$d_{min}''\leq d''\leq d_{max}''$ be the range of all $d''$ for which $I_{d'',\delta''}$ lies strictly between $I_{d,\delta}$ and $I_{d',\delta'}$. If the $I_{d'',\delta''}$ are
pairwise disjoint then
\begin{align*}
\left|\bigcup_{d''=d_{min}''}^{d_{max}''}I_{d'',\delta''}\right|
&=(A_1^{\delta''}-A_0^{\delta''})\frac{\sigma^{d''_{min}\delta''}-\sigma^{(d''_{max}+1)\delta''}}{1-\sigma^{\delta''}}
\end{align*}
\end{lem}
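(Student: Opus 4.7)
The plan is to observe that this is a direct computation: since the $I_{d'',\delta''}$ are pairwise disjoint, the Lebesgue measure of their union equals the sum of their individual lengths, and each length is given by part (i) of Lemma~\ref{lem:diam-dist}.

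First I would write
\[
\left|\bigcup_{d''=d_{min}''}^{d_{max}''}I_{d'',\delta''}\right|
=\sum_{d''=d_{min}''}^{d_{max}''}\diam(I_{d'',\delta''})
=(A_1^{\delta''}-A_0^{\delta''})\sum_{d''=d_{min}''}^{d_{max}''}\sigma^{d''\delta''},
\]
using disjointness in the first equality and Lemma~\ref{lem:diam-dist}(i) in the second to pull out the common factor $A_1^{\delta''}-A_0^{\delta''}$.

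Then I would evaluate the remaining sum as a finite geometric series with ratio $\sigma^{\delta''}\in(0,1)$:
\[
\sum_{d''=d_{min}''}^{d_{max}''}\sigma^{d''\delta''}
=\sigma^{d_{min}''\delta''}\sum_{k=0}^{d_{max}''-d_{min}''}\bigl(\sigma^{\delta''}\bigr)^k
=\frac{\sigma^{d_{min}''\delta''}-\sigma^{(d_{max}''+1)\delta''}}{1-\sigma^{\delta''}}.
\]
Combining the two displays gives the claimed identity. The hypotheses on $I_{d,\delta}, I_{d',\delta'}$ and $d_{min}'',d_{max}''$ are only used through the pairwise disjointness assumption and through the existence of the index range, so no further work is needed. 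There is no real obstacle here; this lemma is a bookkeeping step whose only subtlety is the geometric-series summation, and it will be invoked in the proof of the main parameter-exclusion argument to estimate the measure of the complement of the parameter set of interest.
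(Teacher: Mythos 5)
Your proof is correct and follows exactly the same route as the paper's: use pairwise disjointness to turn the measure of the union into a sum, apply Lemma~\ref{lem:diam-dist}(i) for each summand, and evaluate the resulting finite geometric series. The paper states this in one line; you have simply written out the computation in full.
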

\begin{proof}
If the $I_{d'',\delta''}$ are pairwise disjoint then
\begin{equation}
\left|\bigcup_{d''=d_{min}''}^{d_{max}''}I_{d'',\delta''}\right|=\sum_{d''=d''_{min}}^{d''_{max}}|I_{d'',\delta''}|.
\end{equation}
Consequently, Lemma~\ref{lem:diam-dist} and the summation formula for geometric series implies the result.
\end{proof}
\begin{rmk}
By Lemma~\ref{lem:d_maxmin} we know that $d''_{max}$ and $d''_{min}$ have the form
\[d''_{max}=\lfloor\frac{\delta'}{\delta''}(d'+\alpha_1)-\alpha_0\rfloor;\quad d''_{min}=\lceil\frac{\delta}{\delta''}(d+\alpha_0)-\alpha_1\rceil.\]
\end{rmk}
We will also need the following Lemma.
\begin{lem}\label{lem:quotient}
Let $\sigma,P,Q\in \RR$ satisfy $0<\sigma\leq 1$ and $0<P<Q$. Then there exists a a positive real number $\bar s>0$ such that for all $0<s<\bar s$ we have
\[\half<\frac{\sigma^s P-\sigma^{-s}Q}{P-Q}.\]
\end{lem}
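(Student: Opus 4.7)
The plan is straightforward: this is a continuity argument at $s=0$. Define
\[
g(s)=\frac{\sigma^{s}P-\sigma^{-s}Q}{P-Q}
\]
for $s\in[0,\infty)$. Since $P-Q\neq 0$, the denominator is a nonzero constant, and since $s\mapsto \sigma^{s}$ and $s\mapsto\sigma^{-s}$ are continuous (even analytic) on $[0,\infty)$, the function $g$ is continuous at $0$.

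The key observation is the value at the endpoint: $g(0)=(P-Q)/(P-Q)=1$. Since $1>\tfrac{1}{2}$, continuity at $s=0$ furnishes some $\bar s>0$ such that $|g(s)-1|<\tfrac{1}{2}$ for all $0\le s<\bar s$; in particular $g(s)>\tfrac{1}{2}$ throughout that range, which is exactly the desired inequality.

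I do not anticipate any obstacle. The only thing one has to be careful about is to note that the quotient is well-defined (this uses the hypothesis $P<Q$, which gives $P-Q<0$), and to observe that the inequality in the conclusion is not tight (we are comparing with $1/2$, not with $1$), so an $\varepsilon$--$\delta$ argument of this crudest form suffices. No use of the hypothesis $\sigma\le 1$ is actually required for this argument beyond $\sigma>0$; the hypothesis is presumably included to match the way the lemma is invoked elsewhere.
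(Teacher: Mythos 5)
Your proof is correct and is the natural one. The paper states Lemma~\ref{lem:quotient} without supplying a proof (it is evidently regarded as immediate), and the continuity-at-$s=0$ argument you give, noting $g(0)=1>\tfrac{1}{2}$, is exactly what one would write; your observation that only $\sigma>0$ is really needed is also accurate.
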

\begin{lem}\label{lem:gapfilling}
Assume $\sigma A_1<A_0$. Then there exists a constant $0<L\leq 1$ such that the following holds: choose any admissable $\delta,\delta', d,d'>0$ such that $I_{d,\delta}$, and $I_{d',\delta'}$ are disjoint and $I_{d',\delta'}$ lies to the left of $I_{d,\delta}$.
Then there exists a $\bar\delta<\delta,\delta'$ such that for any admissable $0<\delta''=\delta(m'')<\bar\delta$,
\[L\dist(I_{d,\delta},I_{d',\delta'})<\sum_{d''=d''_{min}}^{d''_{max}}|I_{d'',\delta''}|.\]
Moreover we can take $L=\quarter\left|\frac{1}{\log\sigma}\right|\left(1-\frac{A_0}{A_1}\right)\leq 1$.
\end{lem}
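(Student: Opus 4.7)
The plan is to evaluate $S:=\sum_{d''=d''_{\min}}^{d''_{\max}}|I_{d'',\delta''}|$ using the exact formula from the preceding lemma and then pass to the limit $\delta''\to 0$. Write $D_{\min}=\frac{\delta}{\delta''}(d+\alpha_0)-\alpha_1$ and $D_{\max}=\frac{\delta'}{\delta''}(d'+\alpha_1)-\alpha_0$ for the unrounded endpoint values; since $\sigma^{\alpha_i}=A_i$, direct substitution gives
\[
\sigma^{D_{\min}\delta''}=\sigma^{d\delta}A_0^\delta A_1^{-\delta''},\qquad \sigma^{D_{\max}\delta''}=\sigma^{d'\delta'}A_1^{\delta'}A_0^{-\delta''}.
\]
The ceiling and floor perturb $D_{\min},D_{\max}$ by at most $1$, which multiplies each expression by a factor of the form $\sigma^{\theta\delta''}$ with $\theta\in[0,1]$, all tending to $1$ as $\delta''\to 0$.

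Substituting into the formula from the previous lemma,
\[
S\;=\;\frac{A_1^{\delta''}-A_0^{\delta''}}{1-\sigma^{\delta''}}\,\bigl(\sigma^{d''_{\min}\delta''}-\sigma^{(d''_{\max}+1)\delta''}\bigr).
\]
As $\delta''\to 0$ the prefactor converges to $\log(A_1/A_0)/|\log\sigma|$ by a standard Taylor expansion, while Lemma~\ref{lem:quotient} applied with $P=\sigma^{d\delta}A_0^\delta$ and $Q=\sigma^{d'\delta'}A_1^{\delta'}$ (and $s$ proportional to $\delta''$) ensures the bracket converges to $P-Q=\dist(I_{d,\delta},I_{d',\delta'})>0$. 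Choosing $\bar\delta$ small enough, both factors can be made within $1/\sqrt{2}$ of their limits, so for all $0<\delta''<\bar\delta$,
\[
S\;\geq\;\tfrac{1}{2}\cdot\frac{\log(A_1/A_0)}{|\log\sigma|}\,\dist(I_{d,\delta},I_{d',\delta'}).
\]

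The elementary inequality $-\log r\geq 1-r$ for $r\in(0,1)$, applied at $r=A_0/A_1$, gives $\log(A_1/A_0)\geq 1-A_0/A_1$, from which
\[
S\;\geq\;\tfrac{1}{2}\cdot\frac{1-A_0/A_1}{|\log\sigma|}\,\dist(I_{d,\delta},I_{d',\delta'})\;=\;2L\,\dist(I_{d,\delta},I_{d',\delta'}),
\]
in fact twice the required bound. For $L\leq 1$, the standing hypothesis $\sigma A_1<A_0$ rearranges to $|\log\sigma|>\log(A_1/A_0)\geq 1-A_0/A_1$, so in fact $L<1/4$.

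The main technical point is the uniform control of the two convergences: one needs a single threshold $\bar\delta$, depending on the fixed data $(d,\delta,d',\delta')$ of the chosen disjoint pair, that simultaneously handles the Taylor approximation of the prefactor and the Lemma~\ref{lem:quotient}-style approximation of the bracket (including the $\sigma^{\theta\delta''}$ rounding factors). The presence of the factor $\tfrac{1}{4}$ rather than $\tfrac{1}{2}$ in the definition of $L$ is what provides the slack needed to absorb both the approximation errors and the rounding losses, without requiring sharper quantitative estimates.
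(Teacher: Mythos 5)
Your proof is essentially correct and takes the same route as the paper: substitute $\sigma^{\alpha_i}=A_i$ to express the endpoint exponents in closed form, pass to the limit $\delta''\to 0$ for the prefactor and for the bracket separately, and split a factor of $\tfrac12$ between them to arrive at $L=\tfrac14\left|\tfrac{1}{\log\sigma}\right|\left(1-\tfrac{A_0}{A_1}\right)$. One small slip in attribution: Lemma~\ref{lem:quotient} supplies the specific lower bound $\sigma^s P-\sigma^{-s}Q>\tfrac12(P-Q)$ rather than a convergence statement, and the paper invokes it only after using $\sigma A_1<A_0$ to replace $(A_1/A_0)^{\delta''}$ by $\sigma^{-\delta''}$ in the bracket (the prefactor is treated via the Mean Value Theorem and l'H\^opital instead of a Taylor expansion); your direct limit estimate of the bracket and Taylor estimate of the prefactor achieve the same end and are equally valid, but the citation of that lemma for ``convergence'' should be dropped.
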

\begin{proof}
First observe that
\[\dist(I_{d,\delta},I_{d',\delta'})=A_0^\delta\sigma^{d\delta}-A_1^{\delta'}\sigma^{d'\delta'}\]
and
\[\sum_{d''=d''_{min}}^{d''_{max}}|I_{d'',\delta''}|=(A_1^{\delta''}-A_0^{\delta''})\frac{\sigma^{d''_{min}\delta''}-\sigma^{(d''_{max}+1)\delta''}}{1-\sigma^{\delta''}}.\]
We wish to approximate this last quantity. By Lemma~\ref{lem:d_maxmin} we know that
\[\delta(d+\alpha_0)-\alpha_1\delta''<d_{min}''\delta''<\delta(d+\alpha_0)-\alpha_1\delta''+\delta''\]
and
\[\delta'(d'+\alpha_1)-\delta''\alpha_0<(d_{max}''+1)\delta''<\delta'(d'+\alpha_1)-\delta''\alpha_0+\delta''.\]
Hence
\[A_0^\delta\sigma^{\delta d}\frac{\sigma^{\delta''}}{A_1^{\delta''}}-A_1^{\delta'}\sigma^{\delta'd'}\frac{1}{A_0^{\delta''}}
<\sigma^{d''_{min}\delta''}-\sigma^{(d''_{max}+1)\delta''}<
A_0^\delta\sigma^{\delta d}\frac{1}{A_1^{\delta''}}-A_1^{\delta'}\sigma^{\delta'd'}\frac{\sigma^{\delta''}}{A_0^{\delta''}}.\]
We also know, by the Mean Value Theorem and the concavity of $x\mapsto x^\delta$ for $\delta<1$, that
\[\delta''A_1^{\delta''-1}\frac{A_1-A_0}{1-\sigma^{\delta''}}<\frac{A_1^{\delta''}-A_0^{\delta''}}{1-\sigma^{\delta''}}<\delta''A_0^{\delta''-1}\frac{A_1-A_0}{1-\sigma^{\delta''}}.\]
Together these imply
\begin{equation}\label{ineq:gapfilling-lb1}
K\left(A_0^\delta\sigma^{\delta d}\sigma^{\delta''}-A_1^{\delta'}\sigma^{\delta'd'}\frac{A_1^{\delta''}}{A_0^{\delta''}}\right)<\sum_{d''=d''_{min}}^{d''_{max}}|I_{d'',\delta''}|
\end{equation}
where
\[K=K(\delta'')=\left(1-\frac{A_0}{A_1}\right)\left(\frac{\delta''}{1-\sigma^{\delta''}}\right).\]
Now observe that $\sigma A_1<A_0$ implies
\[A_0^\delta\sigma^{\delta d}\sigma^{\delta''}-A_1^{\delta'}\sigma^{\delta'd'}\sigma^{-\delta''}
<A_0^\delta\sigma^{\delta d}\sigma^{\delta''}-A_1^{\delta'}\sigma^{\delta'd'}\frac{A_1^{\delta''}}{A_0^{\delta''}}.\]
Therefore Lemma~\ref{lem:quotient} tells us, substituting $A_0^\delta\sigma^{\delta d}, A_1^{\delta'}\sigma^{\delta'd'}$ and $\delta'$ for $P,Q$ and $s$ respectively, there exists a constant $\delta_0>0$ such that for all $\delta''<\delta_0$,
\begin{equation}\label{ineq:gapfilling-lb2}
\frac{1}{2}<\frac{A_0^\delta\sigma^{\delta d}\sigma^{\delta''}-A_1^{\delta'}\sigma^{\delta'd'}(A_1/A_0)^{\delta''}}{A_0^\delta\sigma^{\delta d}-A_1^{\delta'}\sigma^{\delta'd'}}.
\end{equation}
Also observe that, by l'Hopital's rule,
\[\lim_{\delta''\to 0}\frac{\delta''}{1-\sigma^{\delta''}}=\lim_{\delta''\to 0}-\frac{1}{\sigma^{\delta''}\log\sigma}=\left|\frac{1}{\log\sigma}\right|,\]
and hence there exists a constant $\delta_1>0$ such that for all $\delta''<\delta_1$
\begin{equation}\label{ineq:gapfilling-lb3}
K(\delta'')=\left(1-\frac{A_0}{A_1}\right)\left(\frac{\delta''}{1-\sigma^{\delta''}}\right)>\frac{1}{2}\left|\frac{1}{\log\sigma}\right|\left(1-\frac{A_0}{A_1}\right).
\end{equation}
Therefore, if we let $\bar\delta=\min_{i=0,1}\delta_i$, inequalities~(\ref{ineq:gapfilling-lb2}) and~(\ref{ineq:gapfilling-lb3}) tell us that for any $\delta''<\bar\delta$,
\begin{equation}
\quarter\left|\frac{1}{\log\sigma}\right|\left(1-\frac{A_0}{A_1}\right)\dist(I_{d,\delta},I_{d',\delta'})< K(\delta'')\left(A_0^\delta\sigma^{\delta d}\sigma^{\delta''}-A_1^{\delta'}\sigma^{\delta'd'}\frac{A_1^{\delta''}}{A_0^{\delta''}}\right).
\end{equation}
Therefore by inequality~(\ref{ineq:gapfilling-lb1}) the Proposition follows.
\end{proof}
\begin{prop}\label{G-thm}
There exists a dense $G_\delta$ subset of $[0,b_0]$ with full relative Lebesgue measure such that each point lies in infinitely many $I_{d,\delta}$.
\end{prop}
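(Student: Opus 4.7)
My plan is to prove the proposition by showing that the $G_\delta$ set
\[
\hat S \;=\; \bigcap_{N\ge 1}U_N, \qquad U_N \;=\; \bigcup_{m\ge N}\bigcup_{d\ge 1}\mathrm{int}\,I_{d,\delta(m)},
\]
is a dense subset of $[0,b_0]$ with full Lebesgue measure. Since every point of $\hat S$ lies in infinitely many intervals $I_{d,\delta(m)}$, this $\hat S$ does the job. The sets $U_N$ are manifestly open and nested decreasing, so it suffices to establish two claims: that each $U_N$ is dense in $[0,b_0]$ (from which density of $\hat S$ follows by Baire), and that $|U_N\cap[0,b_0]|=b_0$ for every $N$ (from which $|\hat S|=\lim_N|U_N\cap[0,b_0]|=b_0$).

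Density of $U_N$ is elementary: given any open $J\subset(0,b_0)$, I would pick $m\ge N$ so large that $\delta(m)$ is small enough for the geometric sequence of left endpoints $\sigma^{d\delta(m)}A_0^{\delta(m)}$ to have step factor $\sigma^{\delta(m)}$ close to $1$ and for the widths $\sigma^{d\delta(m)}(A_1^{\delta(m)}-A_0^{\delta(m)})$ to be short compared to $|J|$; some $\mathrm{int}\,I_{d,\delta(m)}$ then fits entirely inside $J$.

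For the full measure claim I would use the dichotomy from the remark preceding the proposition. If $A_1\sigma\ge A_0$, then consecutive level-$m$ intervals overlap, so $\bigcup_d\mathrm{int}\,I_{d,\delta(m)}\supset(0,(\sigma A_1)^{\delta(m)})$, and since $(\sigma A_1)^{\delta(m)}\to 1>b_0$, the set $U_N$ already contains almost all of $[0,b_0]$. In the substantive case $A_1\sigma<A_0$ I would run a Lebesgue density argument fed by Lemma~\ref{lem:gapfilling}. Fix $N$ and any $b\in[0,b_0)\setminus U_N$; once $m$ is large enough that $(\sigma A_1)^{\delta(m)}>b$ (automatic since $b<1$), the point $b$ is trapped in the open gap $G_m(b)$ between two consecutive level-$m$ intervals, and a direct Taylor estimate gives $|G_m(b)|\sim b\cdot\delta(m)\log(A_0/\sigma A_1)\to 0$, so $\{G_m(b)\}$ shrinks to $b$. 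Applying Lemma~\ref{lem:gapfilling} to the gap $G_m(b)$ and choosing $m'>m$ so that $\delta(m')<\bar\delta$ from the lemma, the level-$m'$ intervals strictly inside $G_m(b)$ cover at least $L|G_m(b)|$, where $L=\tfrac14|1/\log\sigma|(1-A_0/A_1)>0$ is the universal constant of the lemma. Hence $|G_m(b)\cap U_N|\ge L|G_m(b)|$, so $b$ fails to be a Lebesgue density point of $U_N^c$; the Lebesgue density theorem then forces $|U_N^c\cap[0,b_0]|=0$.

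The main delicate point is verifying that for each $b\in[0,b_0)\setminus U_N$ the gap $G_m(b)$ really is bounded on both sides by level-$m$ intervals for all sufficiently large $m$, so that Lemma~\ref{lem:gapfilling} applies. This reduces to the twin observations that $b$ eventually sits strictly below $I_{1,\delta(m)}$ (since $(\sigma A_1)^{\delta(m)}\to 1>b_0$) and above the accumulating swarm of level-$m$ intervals near $0$ (since $b>0$), both readily checked from the explicit formulas for the endpoints. Once that is established the constant $L$ in the lemma is genuinely uniform over all gaps, and the density argument closes.
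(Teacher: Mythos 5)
Your proof is correct and takes a genuinely different route from the paper's. The paper builds the full-measure set by choosing infinitely many pairwise disjoint families $\Delta_0,\Delta_1,\ldots$ of admissible scales, and for each $\Delta_n$ iterating Lemma~\ref{lem:gapfilling} so that the residual (uncovered) set shrinks by the factor $(1-L)$ at every stage; the complement $S_n$ of that limiting null set has full measure and consists of points lying in some interval at a scale from $\Delta_n$, and the disjointness of the $\Delta_n$ is then what forces points of $\bigcap_n S_n$ to lie in \emph{distinct}, hence infinitely many, pieces. You instead write down the natural $G_\delta$ candidate $\hat S=\bigcap_N U_N$ directly, with $U_N$ the union of interiors of all intervals at scales $m\geq N$, and replace the iterated gap-filling by a single Lebesgue density argument: a point of $U_N^c$ sits in a shrinking level-$m$ gap $G_m(b)$ which $U_N$ fills to proportion at least $L$ (Lemma~\ref{lem:gapfilling} applied once), so it is not a density point of $U_N^c$, whence $U_N^c$ is null. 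This is cleaner: the gap-filling estimate is used once rather than compounded geometrically, the combinatorics of disjoint scale-families disappears, and the $G_\delta$ structure is transparent. The two small hygiene points you implicitly use are worth making explicit: boundary points of the intervals form a countable (hence null) set, so ``not in $U_N$'' may be upgraded to ``strictly in a gap'' almost everywhere; and for each fixed $m$ a given $b$ lies in at most finitely many $I_{d,\delta(m)}$ (in the disjoint case exactly one), so membership in $\bigcap_N U_N$ really does force infinitely many distinct $(d,m)$. Both arguments rely on the same key input (the uniform constant $L$ of Lemma~\ref{lem:gapfilling}), so neither is more general than the other; yours is simply shorter.
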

\begin{proof}
There are two cases. The first is when $A_1\sigma\geq A_0$. Then
\[(A_0\sigma^{d+1})^{\delta}<(A_0\sigma^d)^{\delta}\leq(A_1\sigma^{d+1})^{\delta}<(A_1\sigma^d)^\delta,\]
that is, the right endpoint of $I_{d+1,\delta}$ lies to the right of the left endpoint of $I_{d,\delta}$. Therefore $I_{d+1,\delta}$ and $I_{d,\delta}$ overlap for all $d,\delta>0$. Hence
for each point $x\in (0,b_0]$ and any admissible $\delta>0$  there exists an integer $d=d(x,\delta)>0$ such that $x\in I_{d(x,\delta),\delta}$. Therefore $x$ lies in infinitely many $I_{d,\delta}$ and clearly $(0,b_0]$ is a dense $G_\delta$ with full relative Lebesgue measure in $[0,b_0]$.

The second case is when $A_1\sigma<A_0$. Then observe that $I_{d+1,\delta}$ and $I_{d,\delta}$ will be pairwise disjoint for all $d,\delta>0$. For any such pair let $J_{d,\delta}=[(\sigma^{d+1}A_1)^\delta,(\sigma^dA_0)^\delta]$ denote the
corresponding gap. The idea is to construct an infinite sequence of full measure sets, each a countable union of intervals $I_{d,\delta}$. We do this by the following inductive process. For a given $\delta$ we take the union of all $I_{d,\delta}$, this gives us gaps which we fill with $I_{d',\delta'}$, which leads to further gaps and so on. We can fill these gaps by a definite amount each time by Lemma~\ref{lem:gapfilling}. Hence the resulting set will have full Lebesgue measure.

Now let us proceed with the proof. First let us introduce the following notation. Given a union $T\subset [0,b_0]$ of disjoint intervals we will denote by $T_\delta$ the union of all $J_{d,\delta}$ strictly contained in $T$. We will use the notation $T_{\delta,\delta'}=(T_\delta)_{\delta'}$, $T_{\delta,\delta'\delta''}=(T_{\delta,\delta'})_{\delta''}$, and so on. We will denote the complement of $T_{\delta,\delta',\ldots}$ by $S_{\delta,\delta',\ldots}$.

Let $0<b_1<b_0$. We will show that there is a dense $G_\delta$ subset of full relative Lebesgue measure in $[b_1,b_0]$ with the required properties and then send $b_1$ to zero. Therefore let $T=[b_1,b_0]$. Let $\Delta=\{\delta(m)\}_{m\in\NN}$ denote the set of all admissible $\delta$'s ordered decreasingly. Let us construct an infinite subset $\Delta_0$ of $\Delta$ with infinite complement as follows. First choose $\delta_0^{(0)}$ to be arbitrary. Assume $\Delta_0^{(n)}=\{\delta_0,\ldots,\delta^{(n)}\}$ is given. Then Lemma~\ref{lem:gapfilling} tells us there is a $\delta>0$ such that for any $\delta_0^{(n+1)}<\delta$,
\[|T_{\delta_0,\ldots,\delta_0^{(n)}, \delta_0^{(n+1)}}|<(1-L_0)|T_{\delta_0,\ldots, \delta_0^{(n)}}|.\]
where $L_0$ is the contraction constant given by the same Lemma. We may do this as there are only finitely many gaps in $T_{\delta_0,\ldots, \delta_0^{(n-1)}}$. It is clear that by this process we can choose the $\Delta_0^{(n)}$ such that their limit $\Delta_0$ has complement with infinite cardinality. Also observe that, inductively
\[|T_{\delta_0,\ldots,\delta_0^{(n)}, \delta_0^{(n+1)}}|<(1-L_0)^{n+1}|T|,\]
so the limiting set $T_0$ will have zero measure since $0<L_0<1$. Hence its complement, $S_0$, which is a dense countable union of open intervals by construction, will have full relative Lebesgue measure.

Now assume we are given pairwise disjoint subsets $\Delta_0,\ldots,\Delta_N\subset\Delta$ whose union has infinite cardinality and we have the subsets $T_0,\ldots T_N\subset T$. Construct $\Delta_{N+1}=\{\delta_{N+1}^{(n)}\}_{n\in\NN}\subset\Delta$ disjoint from all these sets such that
\begin{equation}\label{contraction}
|T_{\delta_{N+1},\ldots,\delta_{N+1}^{(n-1)}, \delta_{N+1}^{(n)}}|<(1-L_0)|T_{\delta_{N+1},\ldots, \delta_{N+1}^{(n)}}|
\end{equation}
for all $n>0$ and such that the union of $\Delta_0,\ldots,\Delta_N,\Delta_{N+1}$ has complement with infinite cardinality. We can do this by the same argument as in the preceding paragraph. Also by the preceding paragraph it is clear that $T_{N+1}=\lim_{n\to\infty}T_{\delta_{N+1}^{(0)},\ldots,\delta_{N+1}^{(n)}}$ has zero measure and its complement $S_{N+1}$ is a dense countable union of open intervals with full relative Lebesgue measure. Therefore we construct a sequence of subsets $S_0,\ldots,S_n,\ldots\subset T$ which are dense countable unions of open intervals with full relative Lebesgue measure, implying their common intersection $S=\bigcup_{n\geq 0}S_n$ is a dense $G_\delta$ with full relative Lebesgue measure. 

Now let us show that any $x\in S$ is contained in infinitely many $I_{d,\delta}$'s. For each $n\geq 0$, $x$ is contained $S_n$. But $S_n$ is the union of $I_{d,\delta}$'s with $\delta\in\Delta_n$ and so $x$ lies in one of these. Since the $\Delta_n$ are pairwise disjoint, if $x\in I_{d_n,\delta_n}\cap I_{d_m,\delta_m}$ for $\delta_n\in\Delta_n,\delta_m\in\Delta_m,m\neq n$ then $\delta_n\neq\delta_m$. Hence $x$ is contained in infinitely many $I_{d,\delta}$'s.
\end{proof}

\section{Proof of the Main Theorem}\label{mainproof}
All the result so far have been for individual maps $F\in\I_{\Omega,\upsilon}(\bar\e_0)$. We will need the following lemma to make these statements about single maps applicable
to one parameter families parametrised by $b$.
\begin{lem}\label{lem:into-A}
Let $F_b\in \I_{\Omega,\upsilon}(\bar\e_0)$ be a one parameter family parametrised by the average Jacobian $b=b(F_b)\in [0,b_0)$. Then there is an $N>0$ and $0<b_1<b_0$ such that $\RH^NF_b\in \A$
for all $b\in [0,b_1]$. 
\end{lem}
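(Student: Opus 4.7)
The plan is essentially to redo, in a slightly cleaner form, the argument already sketched immediately after the definition of $\A$. The core idea is that the renormalisation fixed point $F_*$ lies in the interior of $\A$, and both the renormalisation operator $\RH$ and the parametrisation $b\mapsto F_b$ are continuous, so a neighbourhood of $F_*$ is mapped to $\A$ by $\RH^N$ for suitable $N$.

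First I would check that $F_*\in \A$. The defining properties (A-\ref{prop1})--(A-\ref{deltaprop2}) involve the tips $\tau_n$, the orbits of the chosen points $z^0_{n+1}, z^1_{n+1}, \tilde{z}^0_{n+1}, \tilde{z}^1_{n+1}$, the universal function $v_*$, the constants $c_m(F)$, and the scaling, squeeze and tilt of $\MT_{m,n}(F)$. For $F=F_*$ these quantities reduce to their universal limits: $c_m\equiv 0$, $\Upsilon_m\equiv \Upsilon_*$, and $(z^i_{n+1},\tilde{z}^i_{n+1})=(z^i_*,\tilde{z}^i_*)$ for all $n$. Then (A-\ref{prop1})--(A-\ref{prop4}) are direct consequences of the choice of $\kappa_1,\kappa_2,\kappa_3,\kappa_4$ in terms of the well-placed points $z^i_*, \tilde{z}^i_*$, while (A-\ref{prop3}) follows from the definition of $\kappa_0$; (A-\ref{prop5}) is exactly inequality~(\ref{ineq:scope4}) of Proposition~\ref{scopeestimate}; (A-\ref{deltaprop1}) is trivial at $F_*$; and (A-\ref{deltaprop2}) follows from inequalities~(\ref{ineq:scope2}),(\ref{ineq:scope3}) together with $b(F_*)=0$, the Remark~\ref{rmk:signconvention} on signs, and the choice of $\delta$. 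All of these hold for all $n>m>0$ with $n-m>N$ once $N$ is large enough.

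Second, I would note that $\A$ is defined by finitely many strict inequalities in quantities depending continuously on $F\in \H_{\Omega,\upsilon}(\bar\e_0)$ (continuity of the tips, of the scope data $\sigma_{m,n},s_{m,n},t_{m,n},r_{m,n},c_m$ on compact sets, and of the points $z^i_{n+1}, \tilde{z}^i_{n+1}$ in their addresses). Hence $\A$ is open. By Theorem~\ref{R-convergence}, $\RH^n F_0\to F_*$ exponentially, so there exists $N>0$ with $\RH^N F_0\in \A$. By continuity of $\RH^N$ (Theorem~\ref{R-construction}) and of $b\mapsto F_b$, there exists $b_1\in(0,b_0)$ such that $\RH^N F_b\in \A$ for all $b\in[0,b_1]$, which is the statement.

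The main obstacle is the verification that $F_*\in \A$ with the prescribed constants $\kappa_i,\delta,N$. This is essentially built in by the way the construction in Section~\ref{construction} was made, but care is required to separate those properties that hold exactly at $F_*$ (like (A-\ref{deltaprop1})) from those that hold only asymptotically (like (A-\ref{prop5}) and (A-\ref{deltaprop2})), and to line up the choice of $N$ so that every asymptotic inequality is satisfied simultaneously on the relevant range of $m,n$.
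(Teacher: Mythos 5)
Your argument is essentially the paper's: $\A$ is an open neighbourhood of $F_*$, $\RH^n F_0\to F_*$ exponentially by Theorem~\ref{R-convergence}, and continuity of $\RH^N$ and of $b\mapsto F_b$ then propagates membership in $\A$ to a uniform $N$ and an interval $[0,b_1]$. You spell out the verification of $F_*\in\A$ which the paper leaves implicit; one small caveat (shared with the paper's own proof) is that at the degenerate fixed point $b(F_*)=0$ forces $t_{m,n}=0$ by Proposition~\ref{scopeestimate}, so the \emph{strict} inequality $t_{m,n}/s_{m,n}<0$ in (A-\ref{deltaprop2}) is not literally satisfied at $F_*$ -- one should either read that inequality as $\leq 0$, or run the openness argument from a nearby non-degenerate $\RH^{N}F_b$ with $b>0$ small rather than from $F_*$ itself.
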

\begin{proof}
The set $\A$ is an open neighbourhood of $F_*$ in the closure of $\H_{\Omega}$. We know that $\dist(\RH^nF_b,F_*)<\rho^n\dist(F_b,F_*)$, where $\dist$ denotes the adapted metric. 
Therefore there is an $N>0$ such that $\RH^nF_b\in \A$ for all integers $n>N$.
\end{proof}
We are now in a position to prove Theorem~\ref{main}.
\begin{proof}
By Lemma~\ref{lem:into-A} there is an integer $N>0$ and a $b_1>0$ such that $\RH^nF_b\in\A$ for all $n>N, b\in [0,b_1]$. Let $\tilde F_b=\RH^NF_b$.

Proposition~\ref{parameterh-overlap} implies if $\tilde F_b\in A$ then
for every $b$ satisfying inequality~(\ref{universalinterval}), $\tilde F_b$ has property $\Hor_{\word{w}{},\word{\tilde{w}}{}}(m,n)$. By Theorem~\ref{thm:fullmeas} the set, $\tilde S$, of parameters $b$ for which $\Hor_{\word{w}{},\word{\tilde{w}}{}}(m,n)$ is satisfied
for infinitely many $m,n$ has full Lebesgue measure. But then by Proposition~\ref{ifh-overlap} if $b$ lies in this set then $\tilde F_b$ has unbounded geometry.

Now we retrieve the statement for $F_b$ as follows. First observe that mapping $\Cantor(\tilde F_b)$ under $\MT_{0,N}(F_b)$ we get a subset of $\Cantor(F_b)$. The maps
$\MT_{0,N}(F_b)$ have bounded distortion by Proposition~\ref{scopeestimate}. Hence if $\Cantor(\tilde F_b)$ has unbounded geometry so will $\Cantor(F_b)$. Secondly we need to
show
\[S\subset \{b:\Cantor(\tilde F_b) \ \mbox{has unbounded geometry}\}\]
is a dense $G_\delta$ with full relative Lebesgue measure. This follows as $b(\tilde F_b)= b^{p^N}$, but $b\mapsto b^{p^N}$ preserves these properties, so by comparability
and injectivity the map $b(F_b)\mapsto b(\tilde F_b)$ must also preserve these properties. Since $\tilde S$ is a dense $G_\delta$ with full relative Lebesgue measure $S$ must also.
\end{proof}

\bibliographystyle{plain}
\bibliography{henon2}
\end{document}